\newtheorem{theorem}{Theorem}
\newtheorem{lemma}{Lemma}
\newtheorem{corollary}{Corollary}
\theoremstyle{definition}
\newtheorem{remark}{Remark}
\newtheorem{definition}{Definition}
\def\blfootnote{\gdef\@thefnmark{}\@footnotetext}
\renewcommand{\le}{\leqslant}
\renewcommand{\ge}{\geqslant}
\renewcommand{\leq}{\leqslant}
\renewcommand{\geq}{\geqslant}
\renewcommand{\emptyset}{\varnothing}
\newcommand{\cj}{\mathcal{J}}
\newcommand{\tmod}{\ \mathsf{mod}\ }
\newcommand{\real}{\mathbb{R}}
\newcommand{\ints}{\mathbb{Z}}
\newcommand{\natu}{\mathbb{N}}
\newcommand{\bsk}{\boldsymbol{k}}
\newcommand{\bsx}{\boldsymbol{x}}
\newcommand{\bsy}{\boldsymbol{y}}
\newcommand{\bszero}{\boldsymbol{0}}
\newcommand{\bsone}{\boldsymbol{1}}
\newcommand{\bsell}{\boldsymbol{\ell}}
\newcommand{\bskappa}{\boldsymbol{\kappa}}
\newcommand{\rd}{\,\mathrm{d}}
\newcommand{\dunif}{\mathbb{U}}
\newcommand{\e}{\mathbb{E}}
\newcommand{\var}{\mathrm{Var}}
\newcommand{\cov}{\mathrm{Cov}}
\newcommand{\rank}{\mathrm{rank}}
\newcommand{\tran}{\mathsf{T}}
\newcommand{\walk}{\mathrm{wal}_k}
\newcommand{\walbsk}{\mathrm{wal}_{\bsk}}
\newcommand{\appA}{A}
\newcommand{\appB}{B}
\newcommand{\appC}{C}
\newcommand{\appD}{D}
\newcommand{\appE}{E}
\newcommand{\appF}{F}
\newcommand{\appG}{G}
\newcommand{\appH}{H}
\begin{document}

\title{Quasi-Monte Carlo confidence intervals using quantiles of randomized nets}
\author[1,2]{Zexin Pan}
  \date{}
\affil[1]{Johann Radon Institute for Computational and Applied Mathematics,
  \"OAW, Altenbergerstrasse~69, 4040~Linz, Austria.}
\affil[2]{Institute of Fundamental and Transdisciplinary Research, Zhejiang University, 866 Yuhangtang Road, Xihu District, Hangzhou, Zhejiang Province, 310058, China.}
\renewcommand\Affilfont{\footnotesize}
\blfootnote{\noindent Email addresses: \url{zep002@zju.edu.cn}}
\maketitle
\begin{abstract}

Recent advances in quasi-Monte Carlo integration have shown that for linearly scrambled digital net estimators, the convergence rate can be dramatically improved by taking the median rather than the mean of multiple independent replicates. In this work, we demonstrate that the quantiles of such estimators can be used to construct confidence intervals with asymptotically valid coverage for high-dimensional integrals. By analyzing the error distribution for a class of infinitely differentiable integrands, we prove that as the sample size increases, the integration error decomposes into an asymptotically symmetric component and a vanishing remainder. Consequently, the asymptotic error distribution is symmetric about zero, ensuring that a quantile-based interval constructed from independent replicates captures the true integral with probability converging to a nominal level determined by the binomial distribution.
\end{abstract}

\section{Introduction}

Quasi-Monte Carlo (QMC) methods have emerged as a powerful alternative to conventional Monte Carlo (MC) integration \cite{dick:kuo:sloa:2013}.
Like MC, QMC approximates a high-dimensional integral $\mu=\int_{[0,1]^s}f(\bsx)\rd\bsx$ by averaging $n$ function evaluations $\hat\mu=(1/n)\sum_{i=0}^{n-1}f(\bsx_i)$.
Unlike MC, however, QMC replaces independent sampling with carefully constructed point sets $\{\bsx_0, \bsx_1,\dots,\bsx_{n-1}\}$ designed to efficiently explore the integration domain $[0,1]^s$, which corresponds to $s$ uniform random seeds. This paper focuses on digital nets \cite{dick:pill:2010}, a class of point set constructions that enable QMC to mitigate the curse of dimensionality more effectively than classical quadrature rules while achieving convergence rates faster than MC under smoothness assumptions. 

Despite their success, QMC estimators based on digital nets face challenges in error quantification because the function evaluations $f(\bsx_i)$ within a QMC mean $\hat\mu$ are dependent \cite{owen2025errorestimationquasimontecarlo}. Conventional solutions employ randomization techniques to generate independent replicates of QMC means, from which $t$-confidence intervals are constructed. Common choices of randomization are Owen’s nested uniform scrambling \cite{rtms} and Matou\v{s}ek’s random linear scrambling \cite{mato:1998:2}. Loh \cite{loh:2003} establishes the asymptotic normality of Owen-scrambled QMC means under certain restrictions, thereby justifying the use of $t$-intervals. However, even for infinitely differentiable integrands, the variance of these estimators cannot decay faster than $O(n^{-3})$, compromising their computational efficiency.

Random linear scrambling, on the other hand, produces estimators with the same variance as Owen's scrambling \cite{wiart2021dependence}, but exhibits markedly different error behavior. These estimators lack asymptotic normality and instead concentrate within a narrow interval whose length adapts to the smoothness of integrands. Notably, for one-dimensional analytic integrands, the median of linearly scrambled QMC means converges to the target integral at super-polynomial rates $O(n^{-c\log n})$ for a constant $c>0$ \cite{superpolyone}. This result was later generalized to multidimensional analytic integrands \cite{superpolymulti}. For $\alpha$-times differentiable integrands, Pan \cite{pan2024automatic} demonstrates that the median attains nearly optimal convergence rates $O(n^{-\alpha-1/2+\varepsilon})$ for every $\varepsilon>0$ even without prior knowledge of $\alpha$. 

The apparent paradox between fast median convergence and slow variance decay is explained by the presence of outliers \cite[Section 3]{superpolyone}. Owing to their outlier sensitivity, $t$-intervals constructed under linear scrambling often overestimate uncertainty and achieve coverage exceeding the nominal level, as observed empirically in \cite{ci4rqmc}. Quantile-based intervals, while more robust and empirically accurate, currently lack theoretical guarantees on coverage—a critical gap that this work addresses. 
 
Before presenting our results, we  situate our contributions within the context of existing methods for error quantification with QMC. In \cite{10.1145/3643847}, asymptotically valid $t$-intervals are established by letting the number of independent QMC replicates $r$ grow polynomially with the per-replicate sample size $n$. While theoretically sound, this approach requires a total sample size $N=nr= O(n^{1+c}) $ for $ r = O(n^c)$, resulting in suboptimal convergence rates.  An alternative approach by \cite{gobet2022mean} introduces robust estimation techniques to handle non-normal errors, but still requires reliable variance estimation and is ultimately constrained by the slow decay of the variance. Higher-order scrambled digital nets \cite{dick:2011} attain optimal rates under explicit smoothness priors and enable empirically valid $t$-intervals, though rigorous coverage guarantees remain unproven. A recent method by \cite{jain2025empirical} establishes valid finite-sample coverage based on empirical Bernstein bounds, but it requires explicit upper and lower bounds on the integrand and its variance decays no faster than $O(N^{-2})$ given a total sample size $N$. For completely monotone integrands, point sets with non-negative (or non-positive) local discrepancy yield computable upper (or lower) error bounds \cite{10.1093/imaiai/iaae021}, but their convergence rates degrade with the dimension $s$ and become unattractive for $s > 4$. We refer to \cite{owen2025errorestimationquasimontecarlo} for a comprehensive survey. 

Together, these gaps motivate our focus on quantile-based intervals, which achieve both convergence rates that adapt to the smoothness of integrands and, as we will establish, asymptotically valid coverage for a class of infinitely differentiable integrands including $f(\bsx)=\exp(\sum_{j=1}^s x_j)$ and $f(\bsx)=\prod_{j=1}^s (1-x_j/2)^{-1}$. Furthermore, the number of replicates $r$ need not scale with the per-replicate sample size $n$. Our main result, informally stated, is the following theorem. We refer to Theorem~\ref{thm:convergetomedian} and Theorem~\ref{thm:generalver} for precise statements.
\begin{theorem}\label{thm:shortver}
    If the integrand $f$ satisfies the assumptions of Theorem~\ref{thm:step3}, then the QMC estimator $\hat{\mu}_{m,\infty}$ with sample size $n=2^m$ defined in Subsection~\ref{subsec:nets} satisfies
    $$\lim_{m\to\infty}\left(\Pr(\hat{\mu}_{m,\infty}<\mu)+\frac{1}{2}\Pr(\hat{\mu}_{m,\infty}=\mu)\right)=\frac{1}{2}.$$
\end{theorem}
As a consequence, given $r$ independent replicates $\hat{\mu}^{(1)}_{m,\infty},\dots, \hat{\mu}^{(r)}_{m,\infty}$ ordered from smallest to largest,
we may choose indices $1\leq \ell< u\leq r$ and construct the confidence interval $[\hat{\mu}^{(\ell)}_{m,\infty},\hat{\mu}^{(u)}_{m,\infty}]$ for $\mu$. Its asymptotic coverage is guaranteed by Theorem~\ref{thm:shortver} and follows directly from the binomial distribution. Precise expressions are given in Corollary~\ref{cor:confint}.
  
The remainder of this paper is structured as follows. Section~\ref{sec:back} introduces foundational concepts and notation, including the Walsh decomposition framework and properties of Walsh coefficients that are critical to our analysis. Section~\ref{sec:main} proves Theorem~\ref{thm:shortver} for complete random designs, a simplified yet illustrative randomization scheme. After outlining the proof strategy, we systematically address each key component of the argument in Subsections~\ref{subsec:step1}–\ref{subsec:anticoncentration}, and derive crucial corollaries in Subsection~\ref{subsec:main}. Section~\ref{sec:general} extends these results to more general randomization schemes, with random linear scrambling as a central special case. Section~\ref{sec:experiment} empirically validates our theory on two highly skewed integrands. Finally, Section~\ref{sec:discussion} identifies challenges in extending these results to finitely differentiable integrands and concludes the paper with a discussion of open research questions.

\section{Background and notation}\label{sec:back}

Let $\natu = \{1, 2, 3, \dots\}$ denote the natural numbers, $\natu_0 = \natu \cup \{0\}$, and $\natu_*^s = \natu_0^s \setminus \{\boldsymbol{0}\}$ (excluding the zero vector). For $\ell \in \natu$, we define $\ints_\ell = \{0, 1, \dots, \ell-1\}$. When we wish to emphasize its field structure, we write $\ints_2$ as $\mathbb{F}_2$.
 The dimension of the integration domain is $s \in \mathbb{N}$, with $1{:}s = \{1, 2, \dots, s\}$. For a matrix $C$, $C(\ell, :)$ denotes its $\ell$-th row. The indicator function $\bsone\{\mathcal{A}\}$ equals 1 if event $\mathcal{A}$ occurs and 0 otherwise. For a finite set $K$, $|K|$ is its cardinality, and $\dunif(K)$ represents the uniform distribution over $K$. Equality in distribution is written as $X\overset{d}{=} Y$. For asymptotics, $a_m\sim b_m$ denotes $\lim_{m\to\infty}a_m/b_m=1$ and $a_m\sim \sum_{\ell=1}^L b_{m,\ell}$ recursively means $a_m-\sum_{\ell=1}^{L'-1}b_{m,\ell}\sim b_{m,L'}$ for $2\leq L' \leq L$. 

The integrand $f: [0,1]^s \to \mathbb{R}$ has $L^1$-norm $\|f\|_1 = \int_{[0,1]^s} |f(\bsx)|  \rd\bsx$ and $L^\infty$-norm $\|f\|_\infty = \sup_{\bsx \in [0,1]^s} |f(\bsx)|$. Let $C([0,1]^s)$ and $C^\infty([0,1]^s)$ denote the spaces of continuous and infinitely differentiable functions, respectively.  

\subsection{Digital nets and randomization}\label{subsec:nets}

For $m\in \natu$ and $i\in \ints_{2^m}$, let the binary expansion $i=\sum_{\ell=1}^{m}i_\ell 2^{\ell-1}$ be represented by the vector $\vec{i}=\vec{i}[m] = (i_1,\dots,i_m)^\tran\in\{0,1\}^m$. Similarly, for $a\in[0,1)$ and precision $E\in \natu$, we truncate the binary expansion $a=\sum_{\ell=1}^\infty a_\ell 2^{-\ell}$ to $E$ digits, denoted $\vec{a}=\vec{a}[E]=(a_1,\dots,a_E)^\tran\in\{0,1\}^E$. For dyadic rationals (numbers with dual binary expansions), we select the representation terminating in zeros.

Let $s$ matrices $C_j\in\{0,1\}^{E\times m}$ define a base-2 digital net over $[0,1]^s$. The unrandomized points $\bsx_i=(x_{i1},\dots,x_{is})$ are generated by
\begin{align}\label{eq:plainqmc}
\vec{x}_{ij} = C_j\vec{i} \ \tmod 2 \text{ for } i\in \ints_{2^m}, j\in1{:}s,
\end{align}
where $\vec{x}_{ij}\in \{0,1\}^E$ represents $x_{ij}\in [0,1)$ truncated to $E$ digits (trailing digits set to 0). Typically, $E\leq m$ for unrandomized digital nets.

We introduce randomization via
\begin{align}\label{eqn:xequalMCiplusD}
\vec{x}_{ij} = C_j\vec{i} + \vec{D}_j\ \tmod 2  \text{ for } i\in \ints_{2^m}, j\in1{:}s,
\end{align}
where $C_j\in \{0,1\}^{E\times m}$ and $\vec{D}_j\in\{0,1\}^E$ are random with precision $E\ge m$. The vector $\vec{D}_j$ is called the digital shift and consists of independent $\dunif(\{0,1\})$ entries. A widely used method to randomize $C_j$ is the \textbf{random linear scrambling} \cite{mato:1998:2}: 
$$C_j=M_j\mathcal{C}_j\ \tmod 2,$$
where $M_j\in \{0,1\}^{E\times m}$ is a random lower-triangular matrix with ones on the diagonal and $\dunif(\{0,1\})$ entries below, and $\mathcal{C}_j \in \{0,1\}^{m\times m}$ is a fixed generating matrix designed to avoid linear dependencies (see \cite[Chapter 4.4]{dick:pill:2010} for details).

Despite the popularity of random linear scrambling, its dependence on $\mathcal{C}_j$ causes technical difficulties, so we postpone its analysis until Section~\ref{sec:general}. In Section~\ref{sec:main}, we instead use \textbf{complete random designs} \cite{pan2024automatic}, where all entries of $C_j$ are independently drawn from $\dunif(\{0,1\})$. This retains the asymptotic convergence rate of random linear scrambling without requiring pre-designed $\mathcal{C}_j$. Numerically, errors under complete random designs are typically larger than those under random linear scrambling, but the difference diminishes as $m$ increases. 

Let $\{\bsx_{i}[E],i\in \ints_{2^m}\}$ denote points from equation~\eqref{eqn:xequalMCiplusD} with precision $E$. Our QMC estimator for $\mu=\int_{[0,1]^s}f(\bsx)\rd\bsx$ is
\begin{equation}\label{eqn:muEdef}
   \hat\mu_{m,E} = \frac1{2^m}\sum_{i=0}^{2^m-1}f\left(\bsx_i[E]\right). 
\end{equation}
Notice that for random linear scrambling, $ \hat\mu_{m,E}$ implicitly depends on the generating matrices $\{\mathcal{C}_j,j \in 1{:}s\}$. For most of our paper, we conveniently assume $E=\infty$ and focus our analysis on $\hat{\mu}_{m,\infty}$. Practical implementation uses finite $E$, often constrained by the floating point representation in use.  Corollary~\ref{cor:confint} quantifies the required $E$ to ensure the truncation error $|\hat{\mu}_{m,E}-\hat{\mu}_{m,\infty}|$ is negligible.

\subsection{Fourier-Walsh decomposition}

Walsh functions serve as the natural orthonormal basis for analyzing base-2 digital nets. For $k\in\natu_0$ and $x\in[0,1)$, the univariate
Walsh function $\walk(x)$ is defined by
$$
\walk(x) = (-1)^{\vec{k}^\tran\vec{x}},
$$
where $\vec{k}\in \{0,1\}^\infty$ and $\vec{x}\in \{0,1\}^\infty$ are the binary expansions of $k$ and $x$, respectively. Since $\vec{k}$ contains a finite number of nonzero entries, a finite-precision truncation suffices for computation.

For multivariate functions, the $s$-dimensional Walsh function $\walbsk:[0,1)^s\to\{-1,1\}$ is given by the tensor product
$$
\walbsk(\bsx) =\prod_{j=1}^s\mathrm{wal}_{k_j}(x_j)
=(-1)^{\sum_{j=1}^s\vec{k}_j^\tran \vec{x}_j},
$$
where $\bsk=(k_1,\dots,k_s)\in \natu^s_0$. These functions form a complete orthonormal basis for $L^2([0,1]^s)$~\cite{dick:pill:2010}, enabling the Walsh decomposition:
\begin{align}\label{eqn:Walshdecomposition}
f(\bsx) &= \sum_{\bsk\in\natu_0^s}\hat f(\bsk) \walbsk(\bsx),\quad\text{where}\\
\hat f(\bsk) &= \int_{[0,1]^s}f(\bsx)\walbsk(\bsx)\rd\bsx. \nonumber
\end{align}
Equality~\eqref{eqn:Walshdecomposition} holds in the $L^2$ sense. A straightforward calculation using \eqref{eqn:Walshdecomposition}, such as that in \cite[Theorem 1]{superpolymulti}, yields the following error decomposition for QMC estimators:
\begin{lemma}\label{lem:decomp}
For $f\in C^\infty([0,1]^s)$, the error of $\hat{\mu}_{m,\infty}$ defined by equation~\eqref{eqn:muEdef} satisfies
\begin{equation}\label{eqn:errordecomposition}
    \hat{\mu}_{m,\infty}-\mu=\sum_{\bsk\in \natu_*^s}Z(\bsk)S(\bsk)\hat{f}(\bsk),
\end{equation}
where
$$Z(\bsk)=\bsone\Big\{\sum_{j=1}^s \vec{k}_j^\tran  C_j=\bszero \tmod 2\Big\} \quad\text{and}\quad S(\bsk) = (-1)^{\sum_{j=1}^s\vec{k}_j^\tran \vec{D}_j}.$$
\end{lemma}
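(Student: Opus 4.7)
The plan is to reduce the statement to a single Walsh character by linearity, then pass from finite Walsh polynomials to continuous $f$ through a uniform approximation. Since the decomposition \eqref{eqn:Walshdecomposition} only converges in $L^2$, the natural way to handle a general continuous integrand is to truncate the series to a finite set and exploit continuity of $f$ to control the remainder deterministically.

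First I would verify the identity on $f=\walbsk$ for a fixed $\bsk\in\natu_*^s$. Here $\mu=0$ and only the single coefficient $\hat f(\bsk)=1$ is nonzero, so the claim collapses to $\hat\mu_\infty=Z(\bsk)S(\bsk)$. Substituting \eqref{eqn:xequalMCiplusD} into $\walbsk(\bsx_i)$ separates the contributions of $\vec{i}$ and the digital shifts:
$$
\walbsk(\bsx_i)=(-1)^{\vec{u}^\tran \vec{i}}\,S(\bsk),\qquad \vec{u}^\tran:=\sum_{j=1}^s \vec{k}_j^\tran C_j \tmod 2.
$$
Averaging $(-1)^{\vec{u}^\tran \vec{i}}$ over $\vec{i}\in\{0,1\}^m$ is an elementary character sum on $\ftwo^m$ that equals $\bsone\{\vec{u}=\bszero\}=Z(\bsk)$, recovering the claim in this case.

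Next, by linearity, for any finite $K\subseteq \natu_0^s$ and Walsh polynomial $f_K=\sum_{\bsk\in K}\hat f(\bsk)\walbsk$, the identity holds term by term. I would apply this to the sequence $K_N=\{\bsk\in\natu_0^s:\max_j k_j<2^N\}$ and set $f_N:=f_{K_N}$. The span of Walsh functions indexed by $K_N$ coincides with the span of indicators of dyadic cubes of side $2^{-N}$, so $f_N(\bsx)$ is simply the average of $f$ over the dyadic cube containing $\bsx$. Since $f\in C([0,1]^s)$ is uniformly continuous on the compact domain, $f_N\to f$ uniformly; this in turn yields $\hat\mu_\infty(f_N)\to\hat\mu_\infty(f)$ and $\mu(f_N)\to\mu(f)$ pathwise in the randomization, so the identity passes to the limit.

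The hard part is that the right-hand side of \eqref{eqn:errordecomposition} need not converge absolutely for generic continuous $f$; the equality must be read as convergence of the partial sums along the natural dyadic truncation by $K_N$ described above. Organizing the proof through these specific truncations avoids any appeal to pointwise convergence of Walsh series — which can fail for continuous integrands — while still producing the clean decomposition required later to study the distribution of $\hat\mu_\infty-\mu$.
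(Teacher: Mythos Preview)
The paper does not prove this lemma itself; it simply attributes the result to \cite{superpolymulti}. Your argument is a correct, self-contained proof: the reduction to a single Walsh character via the $\ftwo^m$ character sum is the standard computation, and your passage from Walsh polynomials to general $f\in C([0,1]^s)$ by identifying the dyadic truncation $f_N$ with cube averages and invoking uniform continuity is clean and avoids any appeal to pointwise convergence of Walsh series. Your closing caveat---that for generic continuous $f$ the right-hand side of \eqref{eqn:errordecomposition} must be read as the limit of the partial sums over $K_N$---is well taken; in the paper's later applications the integrands are smooth enough that Corollary~\ref{cor:fkbound} yields absolute summability and the issue disappears, but the general statement needs exactly the care you supplied.
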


We note that each $S(\bsk)$ with nonzero $\bsk$ follows a $\dunif(\{-1,1\})$ distribution given $\vec{D}_j$ consists of independent $\dunif(\{0,1\})$ entries. The distribution of $Z(\bsk)$ depends on $m,\bsk$ and the choice of randomization for $C_j$. Under the complete random design assumption, each $Z(\bsk)$ with nonzero $\bsk$ follows a Bernoulli distribution with success probability $2^{-m}$, and the collection $\{Z(\bsk),\bsk\in \natu^s_*\}$ is pairwise independent. Their distribution under more general randomization schemes is analyzed in Section~\ref{sec:general}.

\subsection{Notations involving $\bsk$ and $\bskappa$}\label{subset:kandkappa}
For $k = \sum_{\ell=1}^\infty a_\ell 2^{\ell-1}\in \natu_0$, we define the set of nonzero bits $\kappa = \{\ell\in \natu\mid a_\ell=1\}\subseteq\natu$. The bijection between $k$ and $\kappa$ allows interchangeable use of integer and set notation. In this framework, we can rewrite $Z(\bsk)$ as
\begin{equation}\label{eqn:Zk}
Z(\bsk)=\bsone\Big\{\sum_{j=1}^s \sum_{\ell\in \kappa_j}  C_j(\ell,:)=\bszero \tmod 2\Big\},    
\end{equation}
where $\bsk=(k_1,\dots,k_s)$ and $\kappa_j$ is the nonzero bits of $k_j$.

Next, we define some useful norms on $\bsk$ and $\bskappa$. For a finite subset $\kappa\subseteq \natu$, we denote the cardinality of $\kappa$ as $|\kappa|$, the sum of elements in $\kappa$ as $\Vert\kappa\Vert_1$ and the largest element of $\kappa$ as $\lceil\kappa\rceil$. When $\kappa=\emptyset$, we conventionally define $|\kappa|=\Vert\kappa\Vert_1=\lceil\kappa\rceil=0$. For $\bsk=(k_1,\dots,k_s)$ and the corresponding $\bskappa=(\kappa_1,\dots,\kappa_s)$, we define
$$
\Vert\bsk\Vert_0=\Vert\bskappa\Vert_0=\sum_{j=1}^s|\kappa_j|,
\
\Vert\bsk\Vert_1=\Vert\bskappa\Vert_1 = \sum_{j=1}^s\Vert\kappa_j\Vert_1
\ \text{and}\
\lceil\bsk\rceil=\lceil\bskappa\rceil=\max_{j\in 1{:}s}\lceil\kappa_j\rceil.
$$

In our later analysis, it is helpful to view $\natu^s_0$ as a $\mathbb{F}_2$-vector space. For $\bsk_1=(k_{1,1},\dots,k_{s,1})$ and $\bsk_2=(k_{1,2},\dots,k_{s,2})$, we define the sum of $\bsk_1$ and $\bsk_2$ to be $\bsk_1\oplus\bsk_2=(k^\oplus_1,\dots,k^\oplus_s)$ with $\vec{k}^\oplus_j=\vec{k}_{j,1}+\vec{k}_{j,2} \tmod 2$ for each $j\in 1{:}s$. In other words, each $\kappa^\oplus_j$ is the symmetric difference of $\kappa_{j,1}$ and $\kappa_{j,2}$. We also write $\oplus_{i=1}^r\bsk_i$ for the sum of $\bsk_1, \dots, \bsk_r$. For a finite subset $V\subseteq \natu^s_0$, we define the rank of $V$ as the size of its largest linearly independent subset. We say $V$ has full rank if $\rank(V)=|V|$. One can verify that
$$S(\oplus_{i=1}^r\bsk_i)=\prod_{i=1}^rS(\bsk_i) \text{ for } \bsk_1,\dots,\bsk_r\in \natu^s_0,$$
and $\{S(\bsk),\bsk\in V\}$ are jointly independent random variables if $V$ has full rank.

\subsection{Bounds on Walsh coefficients}
The following lemma relates the Walsh coefficients $\hat{f}(\bsk)$ to the partial derivatives of $f$. For $|\bskappa|=(|\kappa_1|,\dots,|\kappa_s|)\in \natu^s_0$, let
\begin{align*}
f^{|\bskappa|}&=\frac{\partial^{\Vert\bskappa\Vert_0}f}{\partial x_1^{|\kappa_1|}\cdots\partial x_s^{|\kappa_s|}}.
\end{align*}
    \begin{lemma}\label{lem:exactfk}
For $f\in C^\infty([0,1]^s)$, 
\begin{equation}\label{eqn:exactfk}
\hat{f}(\bsk)=(-1)^{\Vert\bskappa\Vert_0}\int_{[0,1]^s}f^{|\bskappa|}(\bsx)\prod_{j=1}^s W_{\kappa_j}(x_j)\rd \bsx,
\end{equation} 
where $W_\kappa :[0,1]\to\mathbb{R}$ for $\kappa\subseteq \natu$ is defined recursively by $W_\emptyset(x)=1$ and  
\begin{equation*}
  W_{\kappa}(x)=\int_{[0,1]}(-1)^{\vec{x}(\lfloor\kappa\rfloor)}W_{\kappa\setminus \lfloor\kappa\rfloor}(x)\rd x  
\end{equation*}
with $\vec{x}(\ell)$ denoting the $\ell$-th bit of $x$ and $\lfloor\kappa\rfloor$ denoting the smallest element of $\kappa$. In particular, $W_\kappa(x)$ for nonempty $\kappa$  is continuous, nonnegative, periodic with period $2^{-\lfloor\kappa\rfloor+1}$ and satisfies
\begin{equation}\label{eqn:Wint}
\int_{[0,1]}W_\kappa(x)\rd x=\prod_{\ell\in\kappa}2^{-\ell-1}\quad \text{and} \quad \max_{x\in [0,1]} W_\kappa(x)= 2\prod_{\ell\in\kappa} 2^{-\ell-1}.
\end{equation}
\end{lemma}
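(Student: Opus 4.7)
The plan is to reduce the multivariate identity to its univariate version by Fubini and then prove the univariate case by induction on $d=|\kappa|$, using the smoothness and boundary-zero structure of $W_\kappa$ as the inductive invariant. Since $\walbsk(\bsx)=\prod_{j=1}^s\mathrm{wal}_{k_j}(x_j)$, applying the univariate identity coordinate by coordinate pulls out a factor $(-1)^{|\kappa_j|}$ and inserts $W_{\kappa_j}(x_j)$ at each step, and Fubini glues the pieces into the form~\eqref{eqn:exactfk}. It therefore suffices to prove, for $f\in C^\infty([0,1])$ and $\kappa=\{\ell_1<\cdots<\ell_d\}$,
\begin{equation*}
\int_0^1 f(x)\walk(x)\rd x = (-1)^d \int_0^1 f^{(d)}(x) W_\kappa(x)\rd x.
\end{equation*}

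The central claim, proved by induction on $d$, is that $W_\kappa\in C^{d-1}([0,1])$ with the explicit derivative formula
\begin{equation*}
W_\kappa^{(j)}(x)=\Big(\prod_{i=1}^{j}(-1)^{\vec{x}(\ell_i)}\Big)\,W_{\{\ell_{j+1},\dots,\ell_d\}}(x),\qquad 0\le j\le d-1,
\end{equation*}
together with $W_\kappa^{(j)}(0)=W_\kappa^{(j)}(1)=0$ for every $0\le j\le d-1$ and the almost-everywhere identity $W_\kappa^{(d)}(x)=\walk(x)$. Granting this structure, the target identity follows by integrating $\int_0^1 f(x)W_\kappa^{(d)}(x)\rd x$ by parts $d$ times: every boundary term $[f^{(i)}(x)W_\kappa^{(d-1-i)}(x)]_0^1$ vanishes, leaving exactly $(-1)^d\int_0^1 f^{(d)}(x)W_\kappa(x)\rd x$. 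The inductive step for the smoothness rests on a compatibility observation: the only potential discontinuities of $\prod_{i\le j}(-1)^{\vec{x}(\ell_i)}$ sit at multiples of $2^{-\ell_i}$ for some $i\le j$, and at each such point the factor $W_{\{\ell_{j+1},\dots,\ell_d\}}$ vanishes because its period $2^{-\ell_{j+1}+1}$ divides $2^{-\ell_i}$ (as $\ell_{j+1}>\ell_i$) and, by the inductive hypothesis applied to this smaller index set, it vanishes at every multiple of its period.

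The auxiliary properties of $W_\kappa$ stated in the lemma, including the identities in~\eqref{eqn:Wint}, come out of the same induction with $\tilde\kappa=\kappa\setminus\{\ell_1\}$. The vanishing $W_\kappa(j\cdot 2^{-\ell_1+1})=0$ is obtained by splitting $\int_0^{2^{-\ell_1+1}}(-1)^{\vec{t}(\ell_1)}W_{\tilde\kappa}(t)\rd t$ at $2^{-\ell_1}$ and canceling the two halves via periodicity of $W_{\tilde\kappa}$, and a translation argument on the recursion promotes this into $2^{-\ell_1+1}$-periodicity of $W_\kappa$. Nonnegativity is immediate from the sign of the integrand: $W_\kappa$ climbs monotonically on $[0,2^{-\ell_1}]$ to $W_\kappa(2^{-\ell_1})=\int_0^{2^{-\ell_1}}W_{\tilde\kappa}(t)\rd t$ and descends back to $0$ on $[2^{-\ell_1},2^{-\ell_1+1}]$ without crossing zero. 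Periodicity of $W_{\tilde\kappa}$ also gives $\int_0^{2^{-\ell_1}}W_{\tilde\kappa}=2^{-\ell_1}\int_0^1 W_{\tilde\kappa}$, so the maximum equals $2\prod_{\ell\in\kappa}2^{-\ell-1}$ by induction, and a single Fubini step reduces $\int_0^1 W_\kappa$ to $2^{-\ell_1-1}\int_0^1 W_{\tilde\kappa}=\prod_{\ell\in\kappa}2^{-\ell-1}$.

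The hard part is the $C^{d-1}$ regularity of $W_\kappa$: the whole integration by parts argument collapses without it, and verifying it requires, at every inductive level, matching the dyadic jump locations of a piecewise-constant Walsh factor against the zero set of a periodic $W$-function. Once this compatibility has been pushed through the induction, the remaining computations (integration by parts with vanishing boundaries, Fubini, and monotonicity on dyadic half-periods) are routine.
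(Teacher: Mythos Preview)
Your argument is correct and self-contained. The paper, by contrast, does not prove the lemma at all: it simply cites Theorem~2.5 of \cite{SUZUKI20161} for equation~\eqref{eqn:exactfk} and Section~3 of the same reference for the listed properties of $W_\kappa$. Your route---reducing to one dimension by Fubini, establishing $W_\kappa\in C^{d-1}$ with $W_\kappa^{(d)}=\walk$ a.e.\ through the dyadic compatibility between Walsh jump points and the zeros of the periodic $W$-functions, and then integrating by parts $d$ times---is essentially the content of Suzuki's argument unpacked. The one place where your sketch is slightly compressed is the $C^{d-1}$ claim: to get $W_\kappa'\in C^{d-2}$ at a jump point $x_0$ of $(-1)^{\vec{x}(\ell_1)}$ you need not only $W_{\tilde\kappa}(x_0)=0$ but in fact $W_{\tilde\kappa}^{(i)}(x_0)=0$ for every $0\le i\le d-2$, which follows from your derivative formula applied inductively to $\tilde\kappa$; this is implicit in your ``vanishes at every multiple of its period'' remark but worth making explicit. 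The payoff of your approach is that the reader sees exactly why the recursion and the integration by parts mesh; the paper's citation is the appropriate choice for a background lemma that is already in the literature.
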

\begin{proof}
Theorem 2.5 of \cite{SUZUKI20161} with $n_j=|\kappa_j|$ implies equation~\eqref{eqn:exactfk}. Properties of $W_\kappa(x)$ are proven in Section 3 of \cite{SUZUKI20161}.
\end{proof}

\begin{corollary}\label{cor:fkbound}
    For $f\in C^\infty([0,1]^s)$, 
    $$|\hat{f}(\bsk)|\leq 2^{-\Vert \bskappa \Vert_1} \Vert f^{|\bskappa|}\Vert_1 .$$
\end{corollary}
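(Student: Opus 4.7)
The plan is to derive the bound directly from the integral representation \eqref{eqn:exactfk} in Lemma~\ref{lem:exactfk} by taking absolute values and then pulling the supremum of the univariate weights $W_{\kappa_j}$ out of the integral. Concretely, since each $W_{\kappa_j}$ is nonnegative on $[0,1]$, the first step is
$$|\hat{f}(\bsk)| \leq \int_{[0,1]^s} |f^{|\bskappa|}(\bsx)| \prod_{j=1}^s W_{\kappa_j}(x_j) \rd\bsx.$$
Because the weight factorizes across coordinates, its supremum over $[0,1]^s$ equals $\prod_{j=1}^s \max_{x_j \in [0,1]} W_{\kappa_j}(x_j)$, which I can pull out of the integral, leaving $\Vert f^{|\bskappa|}\Vert_1$ behind.

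The second step is to substitute the explicit maximum from equation~\eqref{eqn:Wint}. For nonempty $\kappa_j$ this gives $\max W_{\kappa_j} = 2\prod_{\ell\in\kappa_j}2^{-\ell-1} = 2\cdot 2^{-\Vert\kappa_j\Vert_1 - |\kappa_j|}$, while for $\kappa_j = \emptyset$ we have $W_\emptyset \equiv 1$. Letting $S = \{j \in 1{:}s : \kappa_j \neq \emptyset\}$ so that only these $j$ contribute the prefactor $2$, the product collapses to
$$\prod_{j=1}^s \max_{x_j \in [0,1]} W_{\kappa_j}(x_j) = 2^{|S|}\cdot 2^{-\Vert\bskappa\Vert_1 - \Vert\bskappa\Vert_0}.$$

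The final step is to absorb the $2^{|S|}$ prefactor using $|S| \leq \Vert\bskappa\Vert_0$, which holds because each $j\in S$ contributes at least one nonzero bit to $\bskappa$. The trailing factor $2^{|S|-\Vert\bskappa\Vert_0}$ is then at most $1$, yielding the claimed bound $|\hat{f}(\bsk)|\leq 2^{-\Vert\bskappa\Vert_1}\Vert f^{|\bskappa|}\Vert_1$. The argument is essentially bookkeeping with no real obstacle; the only step requiring attention is the separate treatment of coordinates with $\kappa_j = \emptyset$, since the formula for $\max W_\kappa$ in Lemma~\ref{lem:exactfk} is stated only for nonempty $\kappa$. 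Folding these empty components into the count via $|S| \leq \Vert\bskappa\Vert_0$ is what makes the exponent $-\Vert\bskappa\Vert_1$ (rather than something weaker like $s-\Vert\bskappa\Vert_1$) fall out cleanly.
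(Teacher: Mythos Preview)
Your proposal is correct and essentially identical to the paper's proof: both bound $\bigl\Vert\prod_{j=1}^s W_{\kappa_j}\bigr\Vert_\infty$ by $2^{-\Vert\bskappa\Vert_1}$ using the explicit maximum in equation~\eqref{eqn:Wint} and then apply H\"older's inequality to equation~\eqref{eqn:exactfk}. The only cosmetic difference is that the paper absorbs the factor $2$ term-by-term via $2\prod_{\ell\in\kappa_j}2^{-\ell-1}\le\prod_{\ell\in\kappa_j}2^{-\ell}$ for each nonempty $\kappa_j$, whereas you collect everything into $2^{|S|-\Vert\bskappa\Vert_0}\le 1$; these are the same inequality written two ways.
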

\begin{proof}
By equation~\eqref{eqn:Wint},
\begin{align*}
        \Big\Vert\prod_{j=1}^s W_{\kappa_j}(x_j)\Big\Vert_\infty \leq \prod_{j\in 1{:}s,\kappa_j\neq\emptyset} 2\prod_{\ell\in\kappa_j} 2^{-\ell-1}\leq \prod_{j\in 1{:}s} \prod_{\ell\in\kappa_j} 2^{-\ell}=2^{-\Vert \bskappa \Vert_1}.
    \end{align*}
    The result follows after applying  Hölder's inequality to equation~\eqref{eqn:exactfk}.
\end{proof}

\section{Proof of main results}\label{sec:main}

In this section, we prove Theorem~\ref{thm:shortver} for $\hat{\mu}_{m,\infty}$ under complete random designs. The proof strategy is as follows. Given a sequence of subsets $K_m\subseteq \natu^s_*$, we use Lemma~\ref{lem:decomp} to decompose the error $\hat{\mu}_{m,\infty}-\mu$ into two components
\begin{equation}\label{eqn:SUM1}
    \mathrm{SUM}_{1,m}=\sum_{\bsk\in K_m }Z(\bsk)S(\bsk)\hat{f}(\bsk) \quad \text{and} \quad \mathrm{SUM}_{2,m}=\sum_{\bsk\in  \natu_*^s\setminus K_m}Z(\bsk)S(\bsk)\hat{f}(\bsk).
\end{equation}
We further define
\begin{equation}\label{eqn:SUM'1}
    \mathrm{SUM}'_{1,m}=\sum_{\bsk\in K_m}Z(\bsk)S'(\bsk)\hat{f}(\bsk),
\end{equation}
where each $S'(\bsk)$ is independently drawn from $\dunif(\{-1,1\})$. We seek $K_m$ small enough that $\mathrm{SUM}_{1,m}\approx\mathrm{SUM}'_{1,m}$ in distribution, yet large enough that  $|\mathrm{SUM}_{2,m}/\mathrm{SUM}_{1,m}|< 1$ holds with high probability, as made precise in the following lemma:

\begin{lemma}\label{lem:proofoutline}
    For a subset $K_m\subseteq \natu^s_*$, let the sums $\mathrm{SUM}_{1,m}$,$\mathrm{SUM}_{2,m}$ and $\mathrm{SUM}'_{1,m}$ be defined as above. We have the bound
    \begin{multline*}
        \Big|\Pr(\hat{\mu}_{m,\infty}<\mu)+\frac{1}{2}\Pr(\hat{\mu}_{m,\infty}=\mu)-\frac{1}{2}\Big|\\\leq d_{TV}(\mathrm{SUM}_{1,m},\mathrm{SUM}'_{1,m})+ \Pr(|\mathrm{SUM}_{1,m}|\leq |\mathrm{SUM}_{2,m}| ),
    \end{multline*}
        where $d_{TV}(X,Y)=\sup_{A\in \mathcal{B}(\real)}|\Pr(X\in A)-\Pr(Y\in A)|$ is the total variation distance between the distribution of random variables $X$ and $Y$ ($\mathcal{B}(\real)$ denotes the Borel sets of $\real$).
\end{lemma}

The proof is given in Appendix~\appA.

To apply Lemma~\ref{lem:proofoutline}, we define 
\begin{equation*}
   Q_N=\{\bsk\in \natu_*^s\mid \Vert\bskappa\Vert_1 \leq N\}  
\end{equation*}
and set $K_m=Q_{N_m}$ with
\begin{equation}\label{eqn:Nmdef}
    N_m=\sup \{N\in \natu_0\mid |Q_N|\le c_s m 2^m \},
\end{equation}
where $c_s$ is a constant to be specified in equation~\eqref{eqn:cs}.
Notice that $\bszero\notin Q_N$ and $Q_0=\emptyset$. 

Our proof of Theorem~\ref{thm:shortver} contains the following three steps:
\begin{itemize}
    \item \textbf{Step 1}: prove $\lim_{m\to\infty}d_{TV}(\mathrm{SUM}_{1,m},\mathrm{SUM}'_{1,m})=0.$
    \item \textbf{Step 2}: prove $\lim_{m\to\infty}\Pr(|\mathrm{SUM}_{2,m}|\ge T_m)=0$ for a sequence $T_m$ to be specified in Corollary~\ref{cor:SUM2bound}.
    \item \textbf{Step 3}: prove $\lim_{m\to\infty}\Pr(|\mathrm{SUM}'_{1,m}|>T_m)=1$.
\end{itemize}
From Steps 1 and 3 we obtain
$$\lim_{m\to\infty}\Pr(|\mathrm{SUM}_{1,m}|>T_m)=\lim_{m\to\infty}\Pr(|\mathrm{SUM}'_{1,m}|>T_m)=1,$$
and together with Step 2 this yields
$$\lim_{m\to\infty}\Pr(|\mathrm{SUM}_{1,m}|>T_m>|\mathrm{SUM}_{2,m}|)=1.$$
Consequently, both $ d_{TV}(\mathrm{SUM}_{1,m},\mathrm{SUM}'_{1,m})$ and $\Pr(|\mathrm{SUM}_{1,m}|\leq |\mathrm{SUM}_{2,m}| )$ converge to $0$ as $m\to\infty$, and Theorem~\ref{thm:shortver} follows directly from Lemma~\ref{lem:proofoutline}. The next three subsections are devoted to the proofs of Step 1-3.

Before presenting the proofs, we gather some useful facts about $Q_N$ and $N_m$.  Corollary 4 of \cite{superpolymulti} establishes the asymptotic relation
    \begin{equation}\label{eqn:QNasymp}
        |Q_N|\sim \frac{D_s}{N^{1/4}}\exp\Bigl(\pi\sqrt{\frac{sN}{3}}\Bigr),
    \end{equation}
    where $D_s$ is a constant depending on $s$. From equation~\eqref{eqn:QNasymp}, it follows immediately that $\lim_{N\to\infty} |Q_{N+1}|/|Q_{N}|=1$. Together with the defining inequalities $|Q_{N_m}|\leq c_s m 2^m < |Q_{N_m+1}|$, this yields
    \begin{equation}\label{eqn:QNm}
        |Q_{N_m}|\sim c_s m 2^m.
    \end{equation}
Equating the asymptotic expression~\eqref{eqn:QNasymp} with $c_sm 2^m$ and solving for $N_m$ gives
    \begin{equation}\label{eqn:Nm}
    N_m\sim \lambda m^2/s+ 3\lambda m\log_2(m)/s \quad\text{for} \quad\lambda=3(\log 2)^2/\pi^2.
\end{equation}

\subsection{Proof of Step 1}\label{subsec:step1}

We first show the number of summands in $\mathrm{SUM}_{1,m}$ is bounded by $2c_sm$ with high probability.

\begin{lemma}\label{lem:SUM1bound}
Under the complete random design assumption,
    $$\Pr\Big(\sum_{\bsk\in Q_{N_m}}Z(\bsk)\geq 2c_sm\Big)\leq \frac{1}{c_s m}.$$
\end{lemma}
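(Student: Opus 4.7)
The plan is to apply Chebyshev's inequality, exploiting the pairwise independence of the $Z(\bsk)$'s under the complete random design. Markov alone only gives a bound of $1/2$, so we genuinely need a second-moment argument.

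First I would compute the mean. Under the complete random design each $Z(\bsk)$ is Bernoulli with success probability $2^{-m}$, so by the definition~\eqref{eqn:Nmdef} of $N_m$,
\begin{equation*}
    \mu_Z := \e\Big[\sum_{\bsk\in Q_{N_m}} Z(\bsk)\Big] = |Q_{N_m}|\,2^{-m} \le c_s m.
\end{equation*}
Next, I would use pairwise independence (quoted right after Lemma~\ref{lem:decomp}) to bound the variance by a sum of per-term variances:
\begin{equation*}
    \var\Big(\sum_{\bsk\in Q_{N_m}} Z(\bsk)\Big) = \sum_{\bsk\in Q_{N_m}} 2^{-m}(1-2^{-m}) \le |Q_{N_m}|\,2^{-m} \le c_s m.
\end{equation*}

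Finally, since $\mu_Z \le c_s m$, the event $\{\sum_{\bsk} Z(\bsk) \ge 2 c_s m\}$ is contained in $\{|\sum_{\bsk} Z(\bsk) - \mu_Z| \ge c_s m\}$, and Chebyshev's inequality gives
\begin{equation*}
    \Pr\Big(\sum_{\bsk\in Q_{N_m}} Z(\bsk) \ge 2 c_s m\Big) \le \frac{\var(\sum_{\bsk} Z(\bsk))}{(c_s m)^2} \le \frac{c_s m}{(c_s m)^2} = \frac{1}{c_s m},
\end{equation*}
which is the claimed bound. There is no real obstacle here: the only subtlety is noticing that pairwise (rather than full) independence is enough for the variance computation, which is exactly what the paper's earlier remark on the distribution of $\{Z(\bsk)\}$ supplies.
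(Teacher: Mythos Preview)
Your proof is correct and essentially identical to the paper's: both compute the mean $\mu_Z=2^{-m}|Q_{N_m}|\le c_s m$ from~\eqref{eqn:Nmdef}, use pairwise independence to get $\var(\sum Z(\bsk))=2^{-m}(1-2^{-m})|Q_{N_m}|\le c_s m$, and apply Chebyshev at deviation $c_s m$.
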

\begin{proof}
First recall that $\Pr(Z(\bsk)=1)=2^{-m}$ and $\{Z(\bsk),\bsk\in Q_{N_m}\}$ are pairwise independent. By Chebyshev's inequality,
    \begin{align*}
        & \Pr\left(\left|\sum_{\bsk\in Q_{N_m}}Z(\bsk)-\frac{|Q_{N_m}|}{2^m}\right|\geq c_s m\right) \\
        \leq  &\frac{1}{c^2_s m^2}\var\left(\sum_{\bsk\in Q_{N_m}}Z(\bsk)\right)
        =  \frac{1}{c^2_s m^2} 2^{-m}(1-2^{-m})|Q_{N_m}|
        \leq  \frac{1}{c^2_s m^2} 2^{-m}|Q_{N_m}|.
    \end{align*}
    Our conclusion then follows from $|Q_{N_m}|\leq c_s m 2^m$.
\end{proof}

Next, we show $Q_N$ contains few additive relations under the operation $\oplus$ defined in Subsection~\ref{subset:kandkappa}. The proof is given in Appendix~\appB.

\begin{lemma}\label{lem:sumk}
Let $N\geq 1$ and $\bsk_1,\dots,\bsk_r$ be sampled independently from $\dunif (Q_N)$. Then there exist positive constants $A_s, B_s$ depending on $s$ such that for all $r\geq 2$
\begin{equation*}
    \Pr\left(\oplus_{i=1}^r\bsk_i\in Q_N\right)\leq A^r_s N^{r/4} r^{-B_s\sqrt{N}}.
\end{equation*}
\end{lemma}

As a consequence, we have the following bound on the cardinality of minimally rank-deficient subsets of $Q_N$.

\begin{lemma}\label{lem:linearindep}
    Let
    \begin{align*}
        &I=\{V\subseteq Q_{N}\mid \rank(V)<|V| \},\\
        &I^*=\{V\in I\mid \text{every proper } W  \subset V \text{ has full rank}\},\\
        &I^*_{r}=I^*\cap \{V\subseteq Q_{N}\mid |V|=r\}.
    \end{align*}
Then with $A_s,B_s$ from Lemma~\ref{lem:sumk}, we have for $r\geq 2$
    $$|I^*_{r+1}|\leq \frac{|Q_{N}|^{r}}{(r+1)!}A^r_s N^{r/4} r^{-B_s\sqrt{N}}.$$
\end{lemma}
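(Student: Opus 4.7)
The plan is to characterize each $V\in I^*_{r+1}$ as a minimal $\mathbb{F}_2$-linear dependence (a circuit of the matroid on $Q_N$) and then count such $V$ by folding them into the ordered tuples controlled by Lemma~\ref{lem:sumk}.

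First, I would observe that every $V\in I^*_{r+1}$ satisfies $\oplus_{\bsk\in V}\bsk=\bszero$. Indeed, since each proper subset of $V$ has full rank and $|V|=r+1$, one has $\rank(V)=r$, so the space of $\mathbb{F}_2$-linear dependences among the $r+1$ elements of $V$ is one-dimensional. If the unique nonzero dependence had any coefficient equal to $0$, then the positions with coefficient $1$ would constitute a dependent proper subset of $V$, contradicting minimality. Hence all coefficients are $1$. Equivalently, omitting any single element $\bsk^{\star}\in V$ and summing the remaining $r$ elements reproduces $\bsk^{\star}$, which in particular lies in $Q_N$.

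This suggests an injection $\Phi$ from pairs consisting of $V\in I^*_{r+1}$ together with an ordering of $r$ chosen elements of $V$, into the set
\begin{equation*}
\mathcal{T}_r=\Big\{(\bsk_1,\dots,\bsk_r)\in Q_N^r\,\Big|\,\oplus_{i=1}^r\bsk_i\in Q_N\Big\}.
\end{equation*}
Each $V$ contributes $(r+1)\cdot r!=(r+1)!$ such pairs (choose the omitted element, then order the remaining $r$); conversely, an ordered tuple $(\bsk_1,\dots,\bsk_r)\in\mathcal{T}_r$ determines at most one preimage, namely $V=\{\bsk_1,\dots,\bsk_r\}\cup\{\oplus_{i=1}^r\bsk_i\}$ together with the exhibited ordering, so $\Phi$ is injective. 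This yields $(r+1)!\,|I^*_{r+1}|\leq|\mathcal{T}_r|$.

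Finally, applying Lemma~\ref{lem:sumk} to $r$ i.i.d.\ uniform samples from $Q_N$ gives
\begin{equation*}
|\mathcal{T}_r|=|Q_N|^r\Pr\bigl(\oplus_{i=1}^r\bsk_i\in Q_N\bigr)\leq|Q_N|^r A_s^r N^{r/4}r^{-B_s\sqrt{N}},
\end{equation*}
and dividing by $(r+1)!$ produces the claimed bound. The argument is short because Lemma~\ref{lem:sumk} already encapsulates the hard combinatorial estimate; the only genuinely new input is the characteristic-$2$ matroid observation that a minimally dependent set has the all-ones vector as its unique nonzero dependence, and there is no real obstacle beyond verifying injectivity of $\Phi$.
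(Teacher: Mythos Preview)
Your proof is correct and follows essentially the same route as the paper: both recognize that a minimally dependent $V\in I^*_{r+1}$ satisfies $\oplus_{\bsk\in V}\bsk=\bszero$, then count ordered tuples and invoke Lemma~\ref{lem:sumk}. The paper phrases the counting probabilistically (sampling $r+1$ points and noting the last is determined by the first $r$), whereas you phrase it via an explicit injection into $\mathcal{T}_r$, but the two computations are line-for-line equivalent.
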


\begin{proof}
    Notice that $(r+1)!|I^*_{r+1}|/|Q_{N}|^{r+1}$ is the probability that independent $\bsk_1,\dots,\bsk_{r+1}$ sampled from $\dunif( Q_{N})$ constitute a set $V\in I^*_{r+1}$, which is further bounded by the probability that $\oplus_{i=1}^{r+1}\bsk_i=\bszero$ since all proper subsets $W$ of $V$ have full rank. Because for any given $\bsk_1,...,\bsk_{r}$, there is at most one $\bsk_{r+1}\in Q_N$ for $\oplus_{i=1}^{r+1}\bsk_i=\bszero$, we therefore have
    $$\frac{(r+1)!|I^*_{r+1}|}{|Q_{N}|^{r+1}} \leq \frac{1}{|Q_N|}\Pr\left(\oplus_{i=1}^{r}\bsk_i\in Q_N\right)\leq \frac{1}{|Q_N|}A^r_s N^{r/4} r^{-B_s\sqrt{N}}.$$
    Rearranging yields the claimed bound.
\end{proof}

\begin{theorem}\label{thm:step1}
Set $c_s$ in equation~\eqref{eqn:Nmdef} to be
\begin{equation}\label{eqn:cs}
    c_s=\frac{1}{4}B_s\sqrt{\frac{\lambda}{s}},
\end{equation}
with $\lambda=3(\log 2)^2/\pi^2$ and $B_s$ from Lemma~\ref{lem:sumk}.  Then under the complete random design assumption, there exist constants $d_s,\underline{m}_s$ depending on $s$ such that for $m\geq \underline{m}_s$,
$$d_{TV}(\mathrm{SUM}_{1,m},\mathrm{SUM}'_{1,m})\leq \frac{1}{c_sm}+m^{d_s} 2^{-4c_sm}.$$
\end{theorem}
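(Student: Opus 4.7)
My plan is to condition on $Z = (Z(\bsk))_{\bsk \in Q_{N_m}}$ and set $\mathcal{V} = \{\bsk \in Q_{N_m} : Z(\bsk) = 1\}$. Under the complete random design, whenever $\mathcal{V}$ is $\ftwo$-linearly independent the signs $\{S(\bsk) : \bsk \in \mathcal{V}\}$ are jointly iid $\dunif(\{-1,1\})$ (the fact recorded at the end of Subsection~\ref{subset:kandkappa}), so conditional on $Z$ the two sums $\mathrm{SUM}_1$ and $\mathrm{SUM}'_1$ share the same distribution on the full-rank event. A standard total-variation estimate then yields
\begin{equation*}
d_{TV}(\mathrm{SUM}_1, \mathrm{SUM}'_1) \leq \Pr\bigl(\mathcal{V} \text{ is rank-deficient}\bigr).
\end{equation*}

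I would split the right-hand side as $\Pr(|\mathcal{V}| \geq 2 c_s m) + \Pr(\mathcal{V}\text{ rank-deficient},\ |\mathcal{V}| < 2 c_s m)$. Lemma~\ref{lem:SUM1bound} bounds the first term by $1/(c_s m)$, giving the first summand of the claimed bound. For the second, any rank-deficient $\mathcal{V}$ contains a minimal rank-deficient subset $V \in I^*$ with $|V| \leq |\mathcal{V}|$, so a union bound over such $V$ applies. For $V = \{\bsk_1,\dots,\bsk_{r+1}\} \in I^*_{r+1}$, minimality forces $\oplus_{i=1}^{r+1}\bsk_i = \bszero$ while any $r$ elements are linearly independent. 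The linear map $(C_1,\dots,C_s) \mapsto \bigl(\sum_{j=1}^{s} \vec{k}_{i,j}^\tran C_j\bigr)_{i=1}^r \tmod 2$ is surjective onto $\ftwo^{mr}$ precisely when $\bsk_1,\dots,\bsk_r$ are linearly independent (a short linear-algebra check), so under the complete random design the events $\{Z(\bsk_i) = 1\}_{i \leq r}$ are jointly independent with individual probability $2^{-m}$, and $Z(\bsk_{r+1}) = 1$ is automatic from the relation $\oplus_i \bsk_i = \bszero$. Hence $\Pr(V \subseteq \mathcal{V}) = 2^{-mr}$, while $r = 1$ contributes nothing because $I^*_2 = \emptyset$.

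Combining with Lemma~\ref{lem:linearindep} and $|Q_{N_m}| \leq c_s m 2^m$ reduces the task to bounding
\begin{equation*}
\sum_{r=2}^{2 c_s m - 2} T_r,\qquad T_r := |I^*_{r+1}|\,2^{-mr} \leq \frac{(c_s m A_s N_m^{1/4})^r}{(r+1)!}\,r^{-B_s \sqrt{N_m}}.
\end{equation*}
Using the calibrated choice $c_s = B_s \sqrt{\lambda/s}/4$ together with the expansion of $\sqrt{N_m}$ from equation~\eqref{eqn:Nm} yields $B_s \sqrt{N_m} = 4 c_s m + 6 c_s \log_2 m + O(1)$, so $T_2 = O(m^{3 - 6 c_s})\,2^{-4 c_s m}$. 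The main obstacle is then showing that $T_2$ dominates the whole sum. A log-ratio analysis of $T_{r+1}/T_r = \bigl[a/(r+2)\bigr]\bigl(1 - 1/(r+1)\bigr)^{B_s\sqrt{N_m}}$ reveals that $T_r$ is first decreasing and eventually increasing (with crossover near $r^* \sim 8 c_s m / \log m$), and Stirling's formula applied at $r = 2 c_s m - 2$ shows the right endpoint is at most $m^{-3 c_s m + O(m)}$, which is super-exponentially smaller than $T_2$. Hence $T_r \leq T_2$ across the entire summation range, the sum is bounded by $(2 c_s m) T_2 = O(m^{4 - 6 c_s} 2^{-4 c_s m})$, and the theorem follows for a suitable $d_s$ (depending on $s$) and all $m \geq \underline{m}_s$. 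The exponent $4 c_s m$ emerges only after carefully tracking the $\log_2 m$ correction in $\sqrt{N_m}$, which is precisely why the choice of $c_s$ is calibrated to $B_s\sqrt{\lambda/s}/4$.
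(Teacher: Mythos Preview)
Your proposal is correct and follows essentially the same route as the paper: bound $d_{TV}$ by $\Pr(\mathcal{V}\in I_m)$ via the mixture argument, split off $|\mathcal{V}|\ge 2c_sm$ with Lemma~\ref{lem:SUM1bound}, union-bound over minimal dependent $V\in I^*_{r+1}$ with $\Pr(V\subseteq\mathcal{V})=2^{-mr}$, and apply Lemma~\ref{lem:linearindep} together with $|Q_{N_m}|\le c_sm2^m$. The only difference is cosmetic, in how the maximum over $r$ is located: the paper pulls out $\sum_r C^r/(r+1)!\le e^C$ and uses log-convexity of $r\mapsto m^{(3/2)r}r^{-mB_s\sqrt{\lambda/s}}$ to reduce to the two endpoints, whereas you analyze the full $T_r$ directly via its log-ratio; both yield the same endpoint comparison. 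One small notational slip: the right-endpoint value should be written $e^{-3c_sm\log m+O(m)}$ (equivalently $m^{-3c_sm(1+o(1))}$) rather than $m^{-3c_sm+O(m)}$, since as written the $O(m)$ sits in the exponent of $m$ and could in principle swamp the main term.
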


\begin{proof}
Let $\mathcal{V}=\{\bsk\in Q_{N_m}\mid Z(\bsk)=1\}$. We can rewrite $\mathrm{SUM}_{1,m}$ as
    \begin{align*}
        \mathrm{SUM}_{1,m}=\sum_{V\subseteq Q_{N_m}}\bsone\{\mathcal{V}=V\}\sum_{\bsk\in V}S(\bsk)\hat{f}(\bsk).
    \end{align*}
    When $V=\emptyset$, we conventionally define the sum over $V$ as $0$. Because $\{Z(\bsk),\bsk\in Q_{N_m}\}$ are independent of $\{S(\bsk),\bsk\in Q_{N_m}\}$, the distribution of $\mathrm{SUM}_{1,m}$ is a mixture of $\sum_{\bsk\in V}S(\bsk)\hat{f}(\bsk)$ weighted by $\Pr(\mathcal{V}=V)$.
    A similar argument shows $\mathrm{SUM}'_{1,m}$ is a mixture of $\sum_{\bsk\in V}S'(\bsk)\hat{f}(\bsk)$ weighted by $\Pr(\mathcal{V}=V)$. When $V$ has full rank, $\{S(\bsk),\bsk\in V\}$ are jointly independent and 
    $$\sum_{\bsk\in V}S(\bsk)\hat{f}(\bsk)\overset{d}{=}\sum_{\bsk\in V}S'(\bsk)\hat{f}(\bsk).$$ 
    Let $I_m$ be $I$ from Lemma~\ref{lem:linearindep} with $N=N_m$. We have the bound 
    \begin{align}\label{eqn:dTV}
        d_{TV}(\mathrm{SUM}_{1,m},\mathrm{SUM}'_{1,m})&\leq \sum_{V\in I_m} \Pr(\mathcal{V}=V)=\Pr(\mathcal{V}\in I_m),
    \end{align}
    where we have used the fact that the total variation distance  satisfies the triangular inequality and is bounded by $1$ between any two distributions. By Lemma~\ref{lem:SUM1bound}, we further have
    $$\Pr(\mathcal{V}\in I_m)\leq \Pr(\mathcal{V}\in I_m, |\mathcal{V}|\leq 2c_sm ) + \frac{1}{c_sm}.$$
It remains to bound  $ \Pr(\mathcal{V}\in I_m, |\mathcal{V}|\leq 2c_sm ) $. Let $I^*_m$, $I^*_{m,r}$ be $I^*, I^*_r$ from Lemma~\ref{lem:linearindep} with $N=N_m$.
    When $\mathcal{V}\in I_m$, we can find a subset $\mathcal{W}\subseteq \mathcal{V}$ such that $\mathcal{W} \in I^*_m$. The cardinality $|\mathcal{W}|$ is at least $3$ because a pair of distinct $\bsk_1,\bsk_2 \in Q_N$ must have rank 2. Hence, a union bound argument shows for large enough $m$
    \begin{equation}\label{eqn:VIm}
    \Pr(\mathcal{V}\in I_m, |\mathcal{V}|\leq 2c_sm )  \leq \sum_{r=2}^{\lfloor2c_sm\rfloor} \sum_{W\in I^*_{m,r+1}}\Pr(W\subseteq \mathcal{V}).
    \end{equation}
    Because $W\in I^*_{m,r+1}$ has rank $r$, 
    $$\Pr(W\subseteq \mathcal{V})= \Pr(Z(\bsk)=1 \text{ for all } \bsk\in W)=2^{-mr}.$$
    Then by Lemma~\ref{lem:linearindep}
    \begin{align*}
        \Pr(\mathcal{V}\in I_m, |\mathcal{V}|\leq 2c_sm )  &\leq \sum_{r=2}^{\lfloor2c_sm\rfloor}2^{-mr}|I^*_{m,r+1}|\\\
        &\le\sum_{r=2}^{\lfloor2c_sm\rfloor}2^{-mr}\frac{|Q_{N_m}|^{r}}{(r+1)!}A^r_s N_m^{r/4} r^{-B_s\sqrt{N_m}}\\
        &\leq \sum_{r=2}^{\lfloor2c_sm\rfloor} \frac{(c_s m A_s N_m^{1/4})^r}{(r+1)!}r^{-B_s\sqrt{N_m}},
    \end{align*}
    where we have used $|Q_{N_m}|\le c_sm 2^m$. Finally, equation~\eqref{eqn:Nm} implies $\lambda m^2/s\leq N_m \leq 2\lambda m^2/s$ for large enough $m$ and
    \begin{align*}
        \Pr(\mathcal{V}\in I_m, |\mathcal{V}|\leq 2c_sm ) \leq &\sum_{r=2}^{\lfloor2c_sm\rfloor} \frac{( c_sA_s (2\lambda/s)^{1/4})^r}{(r+1)!} m^{(3/2)r}r^{-mB_s\sqrt{\lambda/s}}\\
        & \leq \exp(c_sA_s (2\lambda/s)^{1/4}) \max_{2\leq r\leq 2c_sm}m^{(3/2)r}r^{-mB_s\sqrt{\lambda/s}}.
    \end{align*}
    Because $m^{(3/2)r}r^{-mB_s\sqrt{\lambda/s}}$ is log-convex in $r$, the maximum is attained at either $r=2$ or $r=2c_sm$. Plugging in equation~\eqref{eqn:cs} yields
    $$\max_{2\leq r\leq 2c_sm}m^{(3/2)r}r^{-mB_s\sqrt{\lambda/s}}=\max(m^3 2^{-4c_sm}, m^{-c_sm} (2c_s)^{-4c_sm}).$$
    The conclusion follows after we choose $d_s>3$ and $\underline{m}_s$ large enough.
\end{proof}

\subsection{Proof of Step 2}\label{subsec:step2}
We assume $f\in C^{\infty}([0,1]^s)$ throughout this subsection. Recall that
$$\mathrm{SUM}_{2,m}=\sum_{\bsk\in  \natu_*^s\setminus Q_{N_m}}Z(\bsk)S(\bsk)\hat{f}(\bsk).$$ 
    Corollary~\ref{cor:fkbound} indicates that the magnitude of $\mathrm{SUM}_{2,m}$ is governed by the growth rate of $\Vert f^{|\bskappa|}\Vert_1$ as $|\bskappa|$ increases. We now present two results corresponding to different growth regimes. The first, simpler case occurs when $\Vert f^{|\bskappa|}\Vert_1$ grows exponentially in $|\bskappa|$. An example is $f(\bsx)=\exp(\sum_{j=1}^s x_j)$.

\begin{theorem}\label{thm:sum2case1}
Assume 
$$\Vert f^{|\bskappa|}\Vert_1\leq K_1\alpha^{\Vert\bskappa\Vert_0}$$
for some positive constants $K_1$ and $\alpha$. Then there exist a constant $D_1$ and a threshold $\underline{m}_1$ depending on $s$ and $\alpha$ such that for all $m\geq \underline{m}_1$
    $$|\mathrm{SUM}_{2,m}|\leq \sum_{\bsk\in  \natu_*^s\setminus Q_{N_m}}|\hat{f}(\bsk)|< K_1 2^{-N_m+D_1 \sqrt{N_m}}.$$
\end{theorem}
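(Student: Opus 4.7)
The first inequality $|\mathrm{SUM}_2|\leq\sum_{\bsk\in\natu_*^s\setminus Q_{N_m}}|\hat f(\bsk)|$ is immediate from the triangle inequality together with $Z(\bsk)\in\{0,1\}$ and $|S(\bsk)|=1$, so the real content lies in the tail estimate. Combining Corollary~\ref{cor:fkbound} with the growth assumption gives $|\hat f(\bsk)|\leq K_1\alpha^{\Vert\bskappa\Vert_0}2^{-\Vert\bskappa\Vert_1}$, which reduces the theorem to proving a bound of the form $2^{-N_m+D_1\sqrt{N_m}}$ on
\[
S_m\;:=\sum_{\bsk\,:\,\Vert\bskappa\Vert_1>N_m}\alpha^{\Vert\bskappa\Vert_0}\,2^{-\Vert\bskappa\Vert_1}.
\]

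For this I would run a Chernoff-style exponential tilt on the ``energy'' $\Vert\bskappa\Vert_1$. For any $t\in(0,1)$, split $2^{-\Vert\bskappa\Vert_1}=2^{-(1-t)\Vert\bskappa\Vert_1}\cdot 2^{-t\Vert\bskappa\Vert_1}$ and bound the first factor by $2^{-(1-t)N_m}$ on the tail, giving $S_m\leq 2^{-(1-t)N_m}F(t)$ with $F(t):=\sum_{\bsk\in\natu_0^s}\alpha^{\Vert\bskappa\Vert_0}2^{-t\Vert\bskappa\Vert_1}$. Since both $\Vert\bskappa\Vert_0$ and $\Vert\bskappa\Vert_1$ decompose across coordinates $j$ and across bits $\ell\in\kappa_j$, $F(t)$ factors as $\prod_{\ell\geq 1}(1+\alpha\,2^{-t\ell})^s$. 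A routine integral comparison, $\sum_{\ell\geq 1}\log(1+\alpha\,2^{-t\ell})\leq (t\log 2)^{-1}\int_0^\infty\log(1+\alpha e^{-u})\,du$, yields $\log F(t)\leq sG(\alpha)/(t\log 2)$ with $G(\alpha):=\int_0^\infty\log(1+\alpha e^{-u})\,du<\infty$.

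The final step is to optimize in $t$. Choosing $t=t_m:=\sqrt{sG(\alpha)}/(\sqrt{N_m}\log 2)$ balances $-tN_m\log 2$ against $sG(\alpha)/(t\log 2)$ and produces $\log S_m\leq -N_m\log 2+2\sqrt{sG(\alpha)N_m}$, i.e.\ $S_m\leq 2^{-N_m+D_1\sqrt{N_m}}$ with $D_1:=2\sqrt{sG(\alpha)}/\log 2$; multiplication by $K_1$ gives the theorem, and strict inequality is obtained by enlarging $D_1$ infinitesimally. The admissibility $t_m\leq 1$ holds once $N_m\geq sG(\alpha)/\log^2 2$, which fixes $\underline m_1$ in terms of $s$ and $\alpha$. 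I do not expect a real obstacle: the only non-routine ingredient is controlling the $t\to 0^+$ singularity of $F(t)$, which the integral comparison handles cleanly, and everything else is one-variable calculus.
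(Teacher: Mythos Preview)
Your argument is correct and takes a genuinely different route from the paper's. The paper proceeds by slicing the tail into level sets $\{\bsk:\Vert\bskappa\Vert_1=N\}$, invoking the external counting bound $|\{\bsk:\Vert\bskappa\Vert_1=N\}|\leq \tfrac{\pi\sqrt{s}}{2\sqrt{3N}}\exp(\pi\sqrt{sN/3})$ from \cite{superpolymulti}, and handling the $\alpha^{\Vert\bskappa\Vert_0}$ factor via the pointwise inequality $\Vert\bskappa\Vert_0\leq\sqrt{2s\Vert\bskappa\Vert_1}$; the remaining series $\sum_{N>N_m}2^{-N+D_\alpha\sqrt{sN}}$ is then summed as a perturbed geometric series. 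Your Chernoff tilt bypasses both the external counting result and the $\Vert\bskappa\Vert_0$--$\Vert\bskappa\Vert_1$ comparison: the factorization $F(t)=\prod_{\ell\geq 1}(1+\alpha\,2^{-t\ell})^s$ absorbs both the level-set cardinality and the $\alpha^{\Vert\bskappa\Vert_0}$ growth in one stroke, and the integral comparison plus optimization over $t$ directly yields the $2^{-N_m+O(\sqrt{N_m})}$ shape. Your approach is more self-contained and arguably cleaner; the paper's approach, on the other hand, makes the level-set structure explicit and reuses machinery (the partition-function asymptotics) that is needed elsewhere in the paper anyway, so within the paper's economy there is no net saving.
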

\begin{proof}
We follow the strategy used in the proof of \cite[Theorem 2]{superpolymulti}. Corollary~\ref{cor:fkbound} together with our assumption on $f^{|\bskappa|}$ yields
    $$|\hat{f}(\bsk)|\leq K_1 2^{-\Vert\bskappa\Vert_1}\alpha^{\Vert\bskappa\Vert_0}.$$
    The constraint $\bsk\in  \natu_*^s\setminus Q_{N_m}$ implies $\Vert\bskappa\Vert_1>N_m$. By \cite[Theorem 7]{superpolymulti},
    $$|\{\bsk\in \natu_*^s\mid \Vert\bskappa\Vert_1=N\}|\leq \frac{\pi\sqrt{s}}{2\sqrt{3N}}\exp\Bigl(\pi\sqrt{\frac{sN}{3}}\Bigr).$$
    Furthermore, 
     \begin{align}\label{eqn:kappa1vs0}
 \Vert\bskappa\Vert_1&=\sum_{j=1}^s \Vert\kappa_j\Vert_1
    \ge
  \sum_{j=1}^s\frac{|\kappa_j|^2}{2}\geq
  \frac{1}{2s}\biggl(\sum_{j=1}^s|\kappa_j|\biggr)^2=\frac{1}{2s}\Vert\bskappa\Vert_0^2.
\end{align}
Therefore, $\Vert\bskappa\Vert_0\leq \sqrt{2s \Vert\bskappa\Vert_1}$ and
\begin{align*}
    \sum_{\bsk\in  \natu_*^s\setminus Q_{N_m}}|\hat{f}(\bsk)|&\leq \sum_{N=N_m+1}^\infty K_1 2^{-N}\max\Bigl(\alpha^{\sqrt{2s N}},1\Bigr)\frac{\pi\sqrt{s}}{2\sqrt{3N}}\exp\Bigl(\pi\sqrt{\frac{sN}{3}}\Bigr)\\
    &\leq K_1\frac{\pi\sqrt{s}}{2\sqrt{3}}\sum_{N=N_m+1}^\infty 2^{-N+D_\alpha\sqrt{sN}} 
\end{align*}
with $D_\alpha=\sqrt{2}\max(\log_2(\alpha),0)+\pi/(\sqrt{3}\log(2))$. For any $\rho\in (0,1)$, we can find $N_{\rho,s,\alpha}$ such that $D_\alpha\sqrt{s(N+1)}-D_\alpha\sqrt{sN}<\rho$ for $N>N_{\rho,s,\alpha}$. When $m$ is large enough so that $N_m>N_{\rho,s,\alpha}$,
$$\sum_{N=N_m+1}^\infty 2^{-N+D_\alpha\sqrt{sN}}\leq 2^{-N_m+D_\alpha\sqrt{sN_m}}\sum_{N=1}^\infty2^{(\rho-1)N}=2^{-N_m+D_\alpha\sqrt{sN_m}}\frac{2^{\rho-1}}{1-2^{\rho-1}}.$$
By choosing $\rho=1/2$, we obtain for large enough $m$
$$\sum_{\bsk\in  \natu_*^s\setminus Q_{N_m}}|\hat{f}(\bsk)|\leq K_1\frac{ \pi \sqrt{s}}{2\sqrt{3}(\sqrt{2}-1)} 2^{-N_m+D_\alpha\sqrt{sN_m}}.$$
The conclusion follows once we choose a large enough $D_1$.
\end{proof}

A more careful analysis is required when $\Vert f^{|\bskappa|}\Vert_1$ grows factorially in $|\bskappa|$, such as when $f(\bsx)=\prod_{j\in J} (1-x_j/2)^{-1}$ for some $J\subseteq 1{:}s$. The key is to observe that for most $\bsk\in Q_N$, $|\kappa_j|$ is approximately $ 2\sqrt{\lambda N/s}$ in the following sense:

\begin{lemma}\label{lem:meankappa}
    Let $N\geq 1$ and $\bsk$ be sampled from $\dunif (Q_N)$. Then there exist positive constants $A'_s, B'_s$ depending on $s$ such that for any $j\in 1{:}s$ and $\epsilon\in (0,1)$,
    \begin{equation*}
        \Pr\Bigl(\Big|\frac{|\kappa_j|}{\sqrt{\lambda N/s}}-2\Big|>\epsilon\Bigr)\leq A'_sN^{1/4}\exp(-B'_s \epsilon^2 \sqrt{N}) \quad\text{for} \quad\lambda=3(\log 2)^2/\pi^2.
    \end{equation*}
\end{lemma}
The proof is given in Appendix~\appB.

\begin{theorem}\label{thm:sum2case2}
 Assume
$$\Vert f^{|\bskappa|}\Vert_1\leq K_2\alpha^{\Vert\bskappa\Vert_0}\prod_{j\in J} (|\kappa_j|)!$$
for some positive constants $K_2,\alpha$ and some nonempty $J\subseteq 1{:}s$. Then under the complete random design assumption, there exist a constant $d_{s,\alpha}$ depending on $s,\alpha$, a constant $D_2$  depending on $s,\alpha,|J|$ and a threshold $\underline{m}_2$ depending on $s,\alpha,|J|$ such that for $m\geq \underline{m}_2$, 
$$\Pr\Big(|\mathrm{SUM}_{2,m}|\ge K_2 2^{-N_m+(2|J|\log_2(m)+D_2)\sqrt{\lambda N_m/s }} \Big)\leq m^{d_{s,\alpha}}\exp(-c'_s\frac{m}{\log_2(m)^2})$$
with $c'_s=B'_s \sqrt{\lambda/(2s)}$ for $B'_s$ from Lemma~\ref{lem:meankappa}.

\end{theorem}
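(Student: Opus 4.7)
Partition the sum $\mathrm{SUM}_2$ into three pieces according to $N := \Vert\bskappa\Vert_1$ and the sizes of the components $|\kappa_j|$, $j\in J$. Fix $\epsilon_m = 2^{-1/4}/\log_2(m)$ and $\tilde N_m = N_m + C_1 m\log_2(m)$ for a constant $C_1 = C_1(s,|J|)$ chosen below, and call $\bsk \in \natu^s_* \setminus Q_{N_m}$ \emph{typical} if $|\kappa_j| \leq (2+\epsilon_m)\sqrt{\lambda N/s}$ for every $j\in J$ and \emph{atypical} otherwise. Write
\[
\mathrm{SUM}_2 = \mathrm{SUM}_2^{\mathrm{typ}} + \mathrm{SUM}_2^{\mathrm{atyp}} + \mathrm{SUM}_2^{\mathrm{tail}},
\]
where the first two sums collect typical/atypical $\bsk$ with $N\in(N_m,\tilde N_m]$ and the last collects $\bsk$ with $N>\tilde N_m$. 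The plan is to bound $|\mathrm{SUM}_2^{\mathrm{typ}}|$ and $|\mathrm{SUM}_2^{\mathrm{tail}}|$ by $T_m/4$ \emph{deterministically}, and show $\mathrm{SUM}_2^{\mathrm{atyp}} = 0$ except on an event of the claimed probability.

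For $\mathrm{SUM}_2^{\mathrm{typ}}$, use $\prod_{j\in J}(|\kappa_j|)!\leq ((2+\epsilon_m)\sqrt{\lambda N/s})!^{|J|}$ together with Corollary~\ref{cor:fkbound}, Stirling, and the count bound $|\{\bsk:\Vert\bskappa\Vert_1 = N\}|\leq O(N^{-1/2})\exp(\pi\sqrt{sN/3})$ from Theorem 7 of \cite{superpolymulti}. The summand in $N$ is essentially geometric and dominated by $N = N_m + 1$, giving $\sum_{\bsk\,\mathrm{typ}}|\hat f(\bsk)|$ of order $2^{-N_m + (2+\epsilon_m)|J|\log_2(m)\sqrt{\lambda N_m/s} + O(m)}$. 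Since $\epsilon_m\log_2(m) = 2^{-1/4}$, the excess $\epsilon_m|J|\log_2(m)\sqrt{\lambda N_m/s}$ is only of order $\sqrt{\lambda N_m/s}$ and is absorbed by taking $D_2$ large enough. For $\mathrm{SUM}_2^{\mathrm{tail}}$, maximizing $\prod_{j\in J}(|\kappa_j|)!$ subject to $\sum_j|\kappa_j|^2\leq 2N$ concentrates mass on a single coordinate and yields the weaker bound $(\sqrt{2N})!$. The same geometric-series analysis gives a tail bound of order $2^{-\tilde N_m + m\sqrt{2\lambda/s}\log_2(m) + O(m)}$, which falls under $T_m/4$ after choosing $C_1 > \max\{0,\sqrt{2\lambda/s} - 2|J|\lambda/s\}$ (plus slack).

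For $\mathrm{SUM}_2^{\mathrm{atyp}}$, a union bound over $j\in J$ combined with Lemma~\ref{lem:meankappa} yields
\[
|\{\bsk\in Q_N:\bsk\text{ atypical}\}|\leq |J| A'_s N^{1/4}|Q_N|\exp(-B'_s\epsilon_m^2\sqrt N).
\]
Summing over $N\in(N_m,\tilde N_m]$ (a range of length $O(m\log m)$) and bounding each factor by its maximum over the range (using that $|Q_N|,N^{1/4}$ are nondecreasing and $\exp(-B'_s\epsilon_m^2\sqrt N)$ is maximized at $N=N_m$), the total number of atypical $\bsk$ is at most $|J|A'_s\tilde N_m^{5/4}|Q_{\tilde N_m}|\exp(-B'_s\epsilon_m^2\sqrt{N_m})$. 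Because $\tilde N_m - N_m = o(N_m)$, the asymptotic~\eqref{eqn:QNasymp} gives $|Q_{\tilde N_m}|\leq m^{O(1)}2^m$, so combining with $\Pr(Z(\bsk)=1) = 2^{-m}$ and another union bound,
\[
\Pr\bigl(\exists\,\bsk\text{ atypical}:Z(\bsk)=1\bigr)\leq m^{O(1)}\exp\bigl(-B'_s\epsilon_m^2\sqrt{N_m}\bigr) = m^{d_{s,\alpha}}\exp\bigl(-c'_s m/\log_2^2(m)\bigr),
\]
using $\epsilon_m^2\sqrt{N_m} = 2^{-1/2}m\sqrt{\lambda/s}/\log_2^2(m)$ and $c'_s = B'_s\sqrt{\lambda/(2s)}$. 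On the complementary event $\mathrm{SUM}_2^{\mathrm{atyp}} = 0$, hence $|\mathrm{SUM}_2|\leq T_m/2 < T_m$, completing the proof.

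The principal obstacle is the joint calibration of $\epsilon_m$, $\tilde N_m$, and $D_2$: $\epsilon_m$ must be $\Theta(1/\log_2 m)$, since anything larger inflates the typical exponent past $(2|J|\log_2 m + D_2)\sqrt{\lambda N_m/s}$ while anything smaller slows the atypical decay below the required $c'_s$ rate, and the specific constant $c'_s = B'_s\sqrt{\lambda/(2s)}$ forces $\epsilon_m = 2^{-1/4}/\log_2(m)$. The width $\tilde N_m - N_m$ must simultaneously be $O(m\log m)$ (so that $|Q_{\tilde N_m}|\lesssim m^{O(1)}2^m$ keeps the atypical union bound subexponentially small) and large enough to bring the tail below $T_m/4$.
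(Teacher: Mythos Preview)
Your three-piece decomposition (typical/atypical in the window $(N_m,\tilde N_m]$, plus a far tail) is exactly the paper's strategy: the paper sets $N^*_m=\lceil N_m+K_3 m\log_2 m\rceil$, bounds the far tail deterministically, defines a bad set $\tilde Q\subseteq Q_{N^*_m}$ via an upper threshold on each $|\kappa_j|$, kills it probabilistically through Lemma~\ref{lem:meankappa} and $\Pr(Z(\bsk)=1)=2^{-m}$, and handles the remainder deterministically. Your choice $\epsilon_m=2^{-1/4}/\log_2 m$ versus the paper's $1/\log_2 m$ is a cosmetic difference (you lower-bound $\sqrt{N_m}\ge\sqrt{\lambda/s}\,m$, the paper lower-bounds $\sqrt{N^*_m}\ge\sqrt{\lambda/(2s)}\,m$; both land on $c'_s=B'_s\sqrt{\lambda/(2s)}$). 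Your $N$-dependent atypicality threshold, handled by summing Lemma~\ref{lem:meankappa} over the window, is a harmless variant of the paper's fixed threshold $\rho_m\sqrt{\lambda N^*_m/s}$.

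There is one concrete error. Your tail bound asserts that maximizing $\prod_{j\in J}(|\kappa_j|)!$ subject to $\sum_j|\kappa_j|^2\le 2N$ concentrates mass on a single coordinate, giving $(\sqrt{2N})!$. This is false: take $|J|=2$, $\kappa_1=\kappa_2=\{1,\dots,7\}$, so $\|\bskappa\|_1=56$, $\lfloor\sqrt{112}\rfloor=10$, and $(7!)^2=25401600>3628800=10!$. The correct device is the multinomial inequality $\prod_{j\in J}(|\kappa_j|)!\le\bigl(\sum_j|\kappa_j|\bigr)!\le(\|\bskappa\|_0)!\le(\sqrt{2sN})^{\sqrt{2sN}}$, which is what the paper uses. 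With this repair your geometric-series tail argument goes through unchanged after enlarging $C_1$ to beat $\sqrt{2\lambda}$ rather than $\sqrt{2\lambda/s}$. Note also that the statement requires $d_{s,\alpha}$ to depend only on $s,\alpha$; since the polynomial prefactor in your atypical bound scales like $m^{O(C_1)}$ via $|Q_{\tilde N_m}|$, you should choose $C_1$ depending only on $s,\alpha$ (aim for tail $\le 2^{-N_m}\le T_m/4$, as the paper does), not on $|J|$.
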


\begin{proof}
    Corollary~\ref{cor:fkbound} together with our assumption on $f$ yields
    \begin{equation}\label{eqn:fhatinsum2case2}
        |\hat{f}(\bsk)|\leq K_2 2^{-\Vert\bskappa\Vert_1}\alpha^{\Vert\bskappa\Vert_0}\prod_{j\in J} (|\kappa_j|)!.
    \end{equation}
    By equation~\eqref{eqn:kappa1vs0}, $\Vert\bskappa\Vert_0\leq \sqrt{2s \Vert\bskappa\Vert_1}$, so
    $$\prod_{j\in J} (|\kappa_j|)!\leq \Bigl(\sum_{j\in J}|\kappa_j|\Bigr)!\leq (\Vert\bskappa\Vert_0)!\leq (\sqrt{2s \Vert\bskappa\Vert_1})^{\sqrt{2s \Vert\bskappa\Vert_1}}.$$
    Let $N^*_m\geq N_m$ be a new threshold to be specified later. A proof similar to that of Theorem~\ref{thm:sum2case1} shows that  for large $m$
    \begin{align*}
    &\sum_{\bsk\in  \natu_*^s\setminus Q_{N^*_m}}|\hat{f}(\bsk)|\\
    \leq &\sum_{N=N^*_m+1}^\infty K_2 2^{-N}\max\Bigl(\alpha^{\sqrt{2s N}},1\Bigr)\frac{\pi\sqrt{s}}{2\sqrt{3N}}\exp\Bigl(\pi\sqrt{\frac{sN}{3}}\Bigr)(\sqrt{2s N})^{\sqrt{2s N}}\\
    \leq &K_2\frac{\pi\sqrt{s}}{2\sqrt{3}}\sum_{N=N^*_m+1}^\infty  2^{-N+D_\alpha\sqrt{N}} (\sqrt{2sN})^{\sqrt{2s N}}\\
    \leq & K_2\frac{ \pi\sqrt{s}}{2\sqrt{3}(\sqrt{2}-1)} 2^{-N^*_m+D_\alpha\sqrt{N^*_m}}(\sqrt{2sN^*_m})^{\sqrt{2s N^*_m}}.
\end{align*}
Because $N_m\sim \lambda m^2/s+3\lambda m\log_2(m)/s$, we can choose $N^*_m=\lceil N_m+K_3m\log_2(m)\rceil$ with $K_3$ large enough that
$$2^{-N^*_m+D_\alpha\sqrt{N^*_m}}(\sqrt{2sN^*_m})^{\sqrt{2s N^*_m}}\leq 2^{-N_m}$$
for sufficiently large $m$. Then
$$ \Bigl|\sum_{\bsk\in \natu_*^s\setminus Q_{N^*_m} } Z(\bsk)S(\bsk)\hat{f}(\bsk)\Bigr|\leq \sum_{\bsk\in  \natu_*^s\setminus Q_{N^*_m}}|\hat{f}(\bsk)|\leq \frac{K_2}{2}2^{-N_m+2|J|\log_2(m)\sqrt{\lambda N_m/s }}.$$

It remains to show that with high probability
\begin{equation}\label{eqn:K2over2}
    \Bigl|\sum_{\bsk\in Q_{N^*_m}\setminus Q_{N_m} } Z(\bsk)S(\bsk)\hat{f}(\bsk)\Bigr|\le \frac{K_2}{2}2^{-N_m+(2|J|\log_2(m)+D_2)\sqrt{\lambda N_m/s }}
\end{equation}
for some $D_2$. Set $\rho_m=2+\epsilon_m$ for $\epsilon_m\in (0,1)$ to be determined, and define
\begin{equation*}
    \Tilde{Q}=\Bigl\{\bsk\in Q_{N^*_m}\mathrel{\Big|} |\kappa_j|> \rho_m \sqrt{\lambda N^*_m/s} \text{ for some } j \in 1{:}s\Bigr\}.
\end{equation*}
Lemma~\ref{lem:meankappa} together with a union bound argument over $j \in 1{:}s$ gives 
$$\frac{|\Tilde{Q}|}{|Q_{N^*_m}|}\leq sA'_s(N^*_m)^{1/4}\exp(-B'_s \epsilon_m^2 \sqrt{N^*_m}).$$ 
Equation~\eqref{eqn:QNasymp} and $N^*_m=\lceil N_m+K_3m\log_2(m)\rceil$ imply  $|Q_{N^*_m}|\leq m^{K_4}2^m$ for some $K_4$ and large $m$. Hence,
\begin{align}\label{eqn:ZkQtildebound}
 &\Pr(Z(\bsk)=1 \text{ for some }\bsk\in \Tilde{Q})\leq  2^{-m}|\Tilde{Q}| \\ \le & m^{K_4} s A'_s(N^*_m)^{1/4}\exp(-B'_s \epsilon_m^2 \sqrt{N^*_m})
 \le  m^{d_{s,\alpha} }\exp(-c'_s \epsilon_m^2 m), \nonumber
\end{align}
where $c'_s=B's\sqrt{\lambda/(2s)}$ and $d_{s,\alpha} $ is chosen large enough, using $\lambda m^2/(2s)\leq N^*_m \leq 2\lambda m^2/s$ for large $m$.

Thus, with probability at least $1-m^{d_{s,\alpha}} \exp(-c'_s \epsilon_m^2 m)$, every $\bsk$ with $Z(\bsk)=1$ lies outside $\Tilde{Q}$, and
\begin{equation}\label{eqn:removebadk}
    \Bigl|\sum_{\bsk\in Q_{N^*_m}\setminus Q_{N_m} } Z(\bsk)S(\bsk)\hat{f}(\bsk)\Bigr|\leq \sum_{\bsk\in Q_{N^*_m}\setminus(Q_{N_m}\cup\Tilde{Q}) } |\hat{f}(\bsk)|.
\end{equation}
For $\bsk\in  Q_{N^*_m}\setminus\Tilde{Q}$, we have $|\kappa_j|\leq \rho_m \sqrt{\lambda N^*_m/s}$ for all $j\in 1{:}s$, so by equation~\eqref{eqn:fhatinsum2case2}
\begin{align*}
   |\hat{f}(\bsk)|&\le K_2 2^{-\Vert\bskappa\Vert_1}\max\Bigl(\alpha^{s\rho_m \sqrt{\lambda N^*_m/s}},1\Bigr)\Big(\rho_m \sqrt{\lambda N^*_m/s}\Big)^{|J|\rho_m \sqrt{\lambda N^*_m/s}}.
\end{align*}
Because $N_m\sim \lambda m^2/s$ and $N^*_m-N_m\sim K_3m\log_2(m)$, we have $\sqrt{\lambda N^*_m/s}-\sqrt{\lambda N_m/s}\sim K_3\log_2(m)/2  $. Hence, there exists a constant $D^*$ depending on $K_3, |J|,\alpha,s$ such that for large $m$
\begin{align*}
   |\hat{f}(\bsk)|\le K_2 2^{-\Vert\bskappa\Vert_1+ \rho_m|J|  \log_2(m)\sqrt{\lambda N_m/s}+D^* m}.
\end{align*}
Choosing $\epsilon_m=1/\log_2(m)$ for $m\geq 3$ gives
$$\rho_m |J| \log_2(m)\sqrt{\lambda N_m/s}=2 |J| \log_2(m)\sqrt{\lambda N_m/s}+|J|\sqrt{\lambda N_m/s},$$
so
\begin{align}\label{eqn:fkboundoneL}
   |\hat{f}(\bsk)|\le K_2 2^{-\Vert\bskappa\Vert_1+2 |J| \log_2(m)\sqrt{\lambda N_m/s}+D^{**} m}
\end{align}
for some $D^{**} > D^*$. Consequently, when $m$ is sufficiently large,
\begin{align}\label{eqn:sum2case2}
    &\sum_{\bsk\in Q_{N^*_m}\setminus(Q_{N_m}\cup\Tilde{Q}) } |\hat{f}(\bsk)|\\
    \leq&K_2 2^{2|J| \log_2(m)\sqrt{\lambda N_m/s}+D^{**} m}\sum_{N=N_m+1}^{N^*_m} 2^{-N}\frac{\pi \sqrt{s}}{2\sqrt{3N}}\exp\Bigl(\pi\sqrt{\frac{sN}{3}}\Bigr)\nonumber\\
    \leq &K_2 2^{2 |J| \log_2(m)\sqrt{\lambda N_m/s}+D^{**} m}\frac{\pi \sqrt{s}}{2\sqrt{3}(\sqrt{2}-1)}2^{-N_m}\exp\Bigl(\pi\sqrt{\frac{sN_m}{3}}\Bigr). \nonumber
\end{align}
The above bound is asymptotically smaller than the right hand side of equation~\eqref{eqn:K2over2} once we choose a large enough $D_2>D^{**}$, which completes the proof.
\end{proof}

\begin{corollary}\label{cor:SUM2bound}
 Assume for $K,\alpha>0$ and  $\cj=\{J_1,...,J_L\}$ with $J_1,...,J_L\subseteq 1{:}s$, 
 \begin{equation}\label{eqn:fdassumption}
     \Vert f^{|\bskappa|}\Vert_1\leq K\alpha^{\Vert\bskappa\Vert_0}\max_{J\in \cj} \prod_{j\in J} (|\kappa_j|)!
 \end{equation}
 where $\prod_{j\in J} (|\kappa_j|)!=1$ if $J=\emptyset$. Let $\cj_{\max}=\max_{J\in \cj}|J|$. Then under the complete random design assumption, there exist constants $d_{s,\alpha}$ depending on $s,\alpha$, $D_{s,\alpha,\cj}$ depending on $s,\alpha,\cj_{\max}$ and $\underline{m}_{s,\alpha,\cj}$ depending on $s,\alpha,\cj_{\max}$ such that for $m\geq \underline{m}_{s,\alpha,\cj}$, 
$$\Pr(|\mathrm{SUM}_{2,m}|\geq T_m)\leq m^{d_{s,\alpha}}\exp\left(-c'_s \frac{m}{\log_2(m)^2}\right),$$
where $c'_s=B'_s \sqrt{\lambda/(2s)}$ for $B'_s$ from Lemma~\ref{lem:meankappa} and
\begin{equation}\label{eqn:am}
    T_m=K2^{-N_m+2 \cj_{\max}\log_2(m)\sqrt{\lambda N_m/s}+D_{s,\alpha,\cj}m}.
\end{equation}
\end{corollary}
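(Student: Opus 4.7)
The plan is to reduce the claim to Theorem~\ref{thm:sum2case2}, with Theorem~\ref{thm:sum2case1} handling the degenerate case, by splitting the maximum over $\cj$ into a sum. Since $\max_{J\in\cj}a_J\le \sum_{J\in\cj}a_J$ for nonnegative $a_J$, Corollary~\ref{cor:fkbound} and~\eqref{eqn:fdassumption} together yield the pointwise bound
$$|\hat{f}(\bsk)|\le \sum_{J\in\cj}K\,2^{-\Vert\bskappa\Vert_1}\alpha^{\Vert\bskappa\Vert_0}\prod_{j\in J}(|\kappa_j|)!,$$
so applying the triangle inequality to~\eqref{eqn:SUM2} gives $|\mathrm{SUM}_2|\le \sum_{J\in\cj}U^{(J)}$, where $U^{(J)}$ denotes the sum over $\bsk\in \natu_*^s\setminus Q_{N_m}$ of $Z(\bsk)$ multiplied by the $J$-th summand above.

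Next I would observe that the proof of Theorem~\ref{thm:sum2case2} only uses the pointwise bound~\eqref{eqn:fhatinsum2case2} on $|\hat{f}(\bsk)|$, not the specific Walsh coefficient. Re-running that same argument with the $J$-th summand replacing $|\hat{f}(\bsk)|$ would give, for each nonempty $J\in\cj$, constants $D^{(J)},d^{(J)}$ depending on $s,\alpha,|J|$ such that
$$\Pr\Bigl(U^{(J)}\ge K\,2^{-N_m+(2|J|\log_2(m)+D^{(J)})\sqrt{\lambda N_m/s}}\Bigr)\le m^{d^{(J)}}\exp\Bigl(-c'_s \frac{m}{\log_2(m)^2}\Bigr)$$
for sufficiently large $m$. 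For the degenerate case $J=\emptyset$, Theorem~\ref{thm:sum2case1} directly provides a deterministic bound $U^{(\emptyset)}\le K\,2^{-N_m+D_1\sqrt{N_m}}$, which fits in the same template with $|J|=0$.

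Finally I would take a union bound over the $L=|\cj|\le 2^s$ subsets in $\cj$, using $|J|\le \cj_{\max}$ so that the worst-case exponent dominates, to obtain that with the stated failure probability
$$|\mathrm{SUM}_2|\le L\cdot K\,2^{-N_m+(2\cj_{\max}\log_2(m)+D^*)\sqrt{\lambda N_m/s}},$$
where $D^*=\max_{J\in\cj}D^{(J)}$ depends on $s,\alpha,\cj_{\max}$. The factor $L$ and the term $D^*\sqrt{\lambda N_m/s}$ can then be absorbed into a single $D_{s,\alpha,\cj}\,m$ contribution because $\sqrt{N_m}=\Theta(m)$, producing the $T_m$ of~\eqref{eqn:am}. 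I expect the whole argument to be largely bookkeeping; the only mild subtlety is that one must invoke the internal argument of Theorem~\ref{thm:sum2case2} rather than its statement, since $U^{(J)}$ does not a priori arise from a bona fide integrand whose derivatives satisfy the $J$-specific bound.
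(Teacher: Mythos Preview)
Your proposal is correct and follows essentially the same route as the paper: both invoke the internal argument of Theorem~\ref{thm:sum2case2} (noting that $N^*_m$, the set $\tilde Q$, and hence the bad event are independent of $J$) and then combine the per-$J$ bounds into one controlled by $\cj_{\max}$. The only cosmetic difference is that the paper keeps the $\max_{J\in\cj}$ in the pointwise bound on $|\hat f(\bsk)|$ and works under a single good event, whereas you replace the max by a sum and union-bound over $J\in\cj$; since the bad events coincide across $J$, your union bound is harmless but unnecessary.
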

\begin{proof}
The $\cj_{\max}=0$ case follows directly from Theorem~\ref{thm:sum2case1}. When $\cj_{\max}>0$, we notice $N^*_m$ in the proof of Theorem~\ref{thm:sum2case2} does not depend on $\cj$ and equation~\eqref{eqn:removebadk} still holds with probability at least $1-m^{d_{s,\alpha}} \exp(-c'_s\epsilon^2_m m)$ for $\epsilon_m=1/\log_2(m)$. Then similar to equation~\eqref{eqn:fkboundoneL}, we can find $D^{**}_J$ depending on $s,\alpha,|J|$ such that
\begin{align}\label{eqn:fkboundmultiL}
    |\hat{f}(\bsk)|&\le K 2^{-\Vert\bskappa\Vert_1} \max_{J\in \cj} 2^{2 |J| \log_2(m)\sqrt{\lambda N_m/s}+D^{**}_J m}\\
    &\leq K 2^{-\Vert\bskappa\Vert_1} 2^{2 \cj_{\max} \log_2(m)\sqrt{\lambda N_m/s}+(\max_{J\in\cj}D^{**}_J) m}.\nonumber
\end{align}
A calculation similar to equation~\eqref{eqn:sum2case2} gives the desired result.
\end{proof}

\begin{remark}\label{rmk:generalNm}
Corollary~\ref{cor:SUM2bound} can be generalized to a broader choice of $N_m$, as its proof requires only the asymptotic condition $N_m\sim \lambda m^2/s$.
\end{remark}

\begin{remark}
When $f$ is analytic over an open neighborhood of $[0,1]^s$, Proposition 2.2.10 of \cite{krantz2002primer} guarantees that for each $\bsx\in [0,1]^s$, there exist constants $K_{\bsx},\alpha_{\bsx}>0$ depending on $\bsx$ and an open ball $V_{\bsx}$ containing $\bsx$ such that for all multi-indices  $|\bskappa|\in \natu^s_0$,
$$\sup_{\bsy\in V_{\bsx}} |f^{|\bskappa|}(\bsy)|\leq K_{\bsx}\alpha_{\bsx}^{\Vert\bskappa\Vert_0} {\prod_{j=1}^s (|\kappa_j|)!}.$$
By the compactness of $[0,1]^s$, we can choose  $K,\alpha>0$ independent of $\bsx$ that satisfy \eqref{eqn:fdassumption} with $\cj=\{1{:}s\}$. Hence Corollary~\ref{cor:SUM2bound} applies.
\end{remark}

\subsection{Proof of Step 3}\label{subsec:anticoncentration}
Recall that 
$$\mathrm{SUM}'_{1,m}=\sum_{\bsk\in Q_{N_m}}Z(\bsk)S'(\bsk)\hat{f}(\bsk),$$
where each $S'(\bsk)$ is sampled independently from $\dunif(\{-1,1\})$. Our last step is to show $|\mathrm{SUM}'_{1,m}|$ exceeds the threshold $T_m$ from Corollary~\ref{cor:SUM2bound} with high probability. For this we require the following lemma from \cite{erdos:1945}:
\begin{lemma}\label{lem:erdos}
    Let $\{c_i, i\in 1{:}n \}$ be a set of real numbers with $|c_i|\geq 1$ for all $ i\in 1{:}n$ and $\{S'_i, i\in 1{:}n\}$ be independent $\dunif(\{-1,1\})$ random variables. Then 
    $$\sup_{t\in \mathbb{R}}\Pr\Big(\Bigl|\sum_{i=1}^n  c_i S'_i-t\Bigr|\leq 1\Big)\leq \frac{1}{2^n} \binom{n}{\lfloor n/2\rfloor}.$$
\end{lemma}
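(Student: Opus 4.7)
The plan is to prove this by the classical Erd\H{o}s--Sperner antichain argument from \cite{erdos:1945}.

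Since each $S'_i$ is symmetric, I may replace $S'_i$ by $-S'_i$ whenever $c_i<0$ without changing the joint distribution, and so assume WLOG that every $c_i\geq 1$. Identify each sign pattern $\boldsymbol\sigma\in\{-1,+1\}^n$ with the subset $A(\boldsymbol\sigma)=\{i:\sigma_i=+1\}\subseteq 1{:}n$, so that $\sum_{i=1}^n c_iS'_i$ takes the value $T(A):=\sum_{i\in A}c_i-\sum_{i\notin A}c_i$ with probability $2^{-n}$ for $A$ uniform on $2^{1{:}n}$. The lemma then reduces to showing
$$|\mathcal{F}_t|\leq \binom{n}{\lfloor n/2\rfloor}\quad\text{where}\quad \mathcal{F}_t:=\{A\subseteq 1{:}n : T(A)\in[t-1,t+1]\},$$
for every $t\in\mathbb{R}$.

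The key observation is that $\mathcal{F}_t$ is essentially an antichain under set inclusion: for any $A\subsetneq B$ both in $\mathcal{F}_t$,
$$T(B)-T(A)=2\sum_{i\in B\setminus A}c_i\geq 2|B\setminus A|\geq 2,$$
while $T(A),T(B)\in[t-1,t+1]$ forces $T(B)-T(A)\leq 2$. Equality throughout pins down $|B\setminus A|=1$, with the lone extra coefficient equal to $1$ and $T(A)=t-1$, $T(B)=t+1$. In particular no chain of length three is possible. Modulo this boundary equality case, $\mathcal{F}_t$ is an antichain and Sperner's theorem yields $|\mathcal{F}_t|\leq \binom{n}{\lfloor n/2\rfloor}$; dividing by $2^n$ gives the claim.

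The only delicate step is the boundary-equality case. I would handle it either by an infinitesimal perturbation $c_i\mapsto(1+\epsilon)c_i$ that makes every proper inclusion produce a strict difference $>2$, so the perturbed family is a genuine antichain with the same cardinality for $\epsilon$ small enough, or by shifting $t$ off the finite atom set of $\sum c_iS'_i$ and invoking a right-continuity argument to transfer the bound back to the original $t$. Everything else is immediate from Sperner's theorem on the width of the Boolean lattice $2^{1{:}n}$.
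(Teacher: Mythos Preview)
The paper does not prove this lemma at all; it simply cites Erd\H{o}s \cite{erdos:1945}. Your Sperner-antichain argument is indeed the classical one, and it is correct for the \emph{strict}-inequality version $\Pr(|\sum c_iS'_i-t|<1)$.

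However, the statement as written --- with the closed condition $|\cdot|\le 1$ --- is actually \emph{false}, and this is why your boundary fix cannot be completed. Take $n=1$, $c_1=1$, $t=0$: then $|S'_1|\le 1$ always, so the left side is $1$ while the right side is $\binom{1}{0}/2=1/2$. More generally, with all $c_i=1$ and $t=1$ (for $n\geq 2$ even, say), the closed interval $[0,2]$ picks up both middle binomial layers, giving $\binom{n}{n/2}+\binom{n}{n/2-1}>\binom{n}{n/2}$ sums. Your perturbation $c_i\mapsto(1+\epsilon)c_i$ produces a genuine antichain, but the perturbed family $\{A:(1+\epsilon)T(A)\in[t-1,t+1]\}$ is in general \emph{strictly smaller} than the original $\mathcal{F}_t$ (sets with $T(A)=t\pm1$ can drop out), so you cannot transfer the bound back. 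Likewise, shifting $t$ off the atom set only proves the bound at non-optimal $t$; the supremum is attained precisely at the boundary values where the antichain property fails.

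The upshot: the lemma should read $<1$ (or use a half-open interval of length $2$), which is what Erd\H{o}s actually proved. Your argument is then complete with no boundary case to patch. For the paper's purposes this is immaterial: the application in Theorem~\ref{thm:SUM1am} only needs a bound of order $|\mathcal{W}|^{-1/2}$, which either version delivers, and in any case one could replace $Q_m(T_m)$ by $\{\bsk:|\hat f(\bsk)|>T_m\}$ (strict) without affecting anything downstream.
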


\begin{theorem}\label{thm:SUM1am}
For $T\geq 0$, define
$$\Lambda(T)=\{\bsk\in \natu^s_*\mid |\hat{f}(\bsk)|\geq T\}.$$
Suppose that $f$ satisfies the assumptions of Corollary~\ref{cor:SUM2bound} and $T_m$ in equation~\eqref{eqn:am} satisfies
 \begin{equation}\label{eqn:Qmam}
 \liminf_{m\to\infty}\frac{|Q_{N_m}\cap \Lambda(T_m)|}{|Q_{N_m}|}>0.
 \end{equation}
 Then under the complete random design assumption, 
\begin{equation*}
    \limsup_{m\to\infty}\sqrt{m}\Pr(|\mathrm{SUM}'_{1,m}|\le T_m)<\infty.
\end{equation*}
\end{theorem}

\begin{proof}
Let $\mathcal{W}=\{\bsk\in Q_{N_m}\cap \Lambda(T_m)\mid Z(\bsk)=1\}$. For large $m$, equations~\eqref{eqn:QNm} and \eqref{eqn:Qmam} imply
\begin{equation}\label{eqn:ewlowerbound}
 \e|\mathcal{W}|=2^{-m}|Q_{N_m}\cap \Lambda(T_m)|\geq c_0 c_s m   
\end{equation}
for some constant $c_0>0$ and $c_s$ from equation~\eqref{eqn:cs}.

Following an argument similar to that in Lemma~\ref{lem:SUM1bound},  Chebyshev’s inequality gives
\begin{align*}
\Pr\left(|\mathcal{W}|\leq \e|\mathcal{W}|/2\right)
\leq  \frac{\var(|\mathcal{W}|)}{\left(\e|\mathcal{W}|-\e|\mathcal{W}|/2\right)^2}\leq \frac{\e|\mathcal{W}|}{(\e|\mathcal{W}|)^2/4}
\leq \frac{4}{\e|\mathcal{W}|}.
\end{align*}
On the event $|\mathcal{W}|\geq \e|\mathcal{W}|/2$, we decompose
$$\mathrm{SUM}'_{1,m}=\sum_{\bsk\in\mathcal{W}}S'(\bsk)\hat{f}(\bsk)+\sum_{\bsk\in \mathcal{V}\setminus \mathcal{W}}S'(\bsk)\hat{f}(\bsk).$$
Conditioned on the $\sigma$-algebra generated by $\{Z(\bsk),\bsk\in Q_{N_m}\}$, we apply Lemma~\ref{lem:erdos} to $\mathrm{SUM}'_{1,m}$ by treating the sum over $\bsk\in \mathcal{V}\setminus \mathcal{W}$ as a shift term, obtaining
\begin{align*}
&\Pr\left(|\mathrm{SUM}'_{1,m}|\le T_m \vert Z(\bsk),\bsk\in Q_{N_m}\right)\\
\leq &\sup_{t\in \mathbb{R} }\Pr\left(\left|\sum_{\bsk\in \mathcal{W} }S'(\bsk)\frac{\hat{f}(\bsk)}{T_m}-t\right|\le 1 \Bigg\vert Z(\bsk),\bsk\in Q_{N_m}\right)
\leq \frac{1}{2^{|\mathcal{W}|}} \binom{|\mathcal{W}|}{\lfloor |\mathcal{W}|/2\rfloor}\leq \frac{C}{\sqrt{|\mathcal{W}|}},
\end{align*}
where the constant $C$ is chosen large enough, as guaranteed by the asymptotic relation $\binom{n}{\lfloor n/2\rfloor}\sim 2^n(\pi n)^{-1/2}$. Therefore,
\begin{align}\label{eqn:step3}
    &\Pr\left(|\mathrm{SUM}'_{1,m}|\le T_m\right)\\
    \leq &\Pr\left(|\mathrm{SUM}'_{1,m}|\le T_m,|\mathcal{W}|> \e|\mathcal{W}|/2\right) +\Pr\left(|\mathcal{W}|\leq \e|\mathcal{W}|/2\right)
    \leq  \frac{C}{\sqrt{\e|\mathcal{W}|/2}}+\frac{4}{\e|\mathcal{W}|},\nonumber 
\end{align}
which, together with equation~\eqref{eqn:ewlowerbound}, yields our conclusion.
\end{proof}

The following theorem gives a sufficient condition for equation~\eqref{eqn:Qmam}. Informally, $f$ must be "nondegenerate" in the sense that sufficiently many $\bsk\in Q_{N_m}$ have $|\hat{f}(\bsk)|$ comparable to their upper bounds in equation~\eqref{eqn:fkboundmultiL} up to an exponential factor in $m$.

\begin{theorem}\label{thm:step3}
For $\beta>0$ and a collection $\cj=\{J_1,\dots,J_L\}$ of subsets of $1{:}s$, define
            \begin{equation*}
        Q_{N,c,\beta,\cj}(f)=\Bigl\{\bsk\in Q_N \Big\vert |\hat{f}(\bsk)|\geq  c 2^{-\Vert\bskappa\Vert_1}\beta^{\Vert\bskappa\Vert_0}\max_{J\in \cj} \prod_{j\in J} (|\kappa_j|)!\Bigr\}
            \end{equation*}
            and
\begin{align*}
  \mathcal{F}_{\cj}=\Big\{f\in C^{\infty}([0,1]^s) \Big\vert \sup_{c>0}\sup_{\beta>0}\liminf_{N\to\infty}\frac{|Q_{N,c,\beta,\cj}(f)|}{|Q_{N}|} >0  \Big\}.  
\end{align*}
Then equation~\eqref{eqn:Qmam} holds if $f\in C^{\infty}([0,1]^s)$ satisfies equation~\eqref{eqn:fdassumption} for some $K,\alpha>0$ and some collection $\cj=\{J_1,\dots,J_L\}$, such that $f\in \mathcal{F}_{\cj}$.
\end{theorem}

The proof is given in Appendix~\appC.

\begin{remark}
    The definition of $\mathcal{F}_{\cj}$ may appear nonstandard. Notably, $\mathcal{F}_{\cj}$ is not convex and excludes the zero function. However, Clancy et al. \cite{CLANCY201421} argue that conical input function spaces are preferred over convex ones in the context of adaptive confidence intervals, and $\mathcal{F}_{\cj}$ is by design conical. More generally, some nondegenerate assumption is needed to exclude constant integrands, for which $\hat{\mu}_{m,\infty}-\mu=0$.
    See also \cite[Theorem 2]{loh:2003} and \cite[Theorem 2]{basu2017asymptotic} for analogous nondegenerate conditions used to establish asymptotic normality of Owen-scrambled QMC means.
\end{remark}

\begin{remark}\label{remark:assump}
As an example of a function not belonging $\mathcal{F}_{\cj}$, consider $f$ satisfying $f^{|\bskappa|}=0$ whenever $|\kappa_1|$ (the number of differentiation with respect to $x_1$) exceeds a threshold $\underline{\kappa}\in\natu_0$. Lemma~\ref{lem:exactfk} then implies $\hat{f}(\bsk)=0$ whenever $|\kappa_1|>\underline{\kappa}$ and Lemma~\ref{lem:meankappa} shows only an exponentially small fraction of $\bsk\in Q_N$ have nonzero $\hat{f}(\bsk)$, so $|Q_{N,c,\beta,\cj}(f)|/|Q_{N}|$ converges to $0$ regardless of the choice of $c,\beta$ and $\cj$.

In this case, however, $f$ admits the representation
$$f=\sum_{p=0}^{\underline{\kappa}}g_p(\bsx_{-1})x_1^p$$
with $\bsx_{-1}=(x_2,\dots,x_s)$, so one can integrate $f$ along the $x_1$ direction and apply our algorithm to $\sum_{p=0}^{\underline{\kappa}}g_p(\bsx_{-1})/(p+1)$ instead. This technique, known as pre-integration in the QMC literature, serves to regularize the integrand. See \cite{grie:kuo:leov:sloa:2018, Sifan:2023} for additional examples.

Another solution is to localize our calculation to $Q'=\{\bsk\in \natu_*^s\mid  |\kappa_1| \leq  \underline{\kappa}\}$. Specifically, we set $K_m=Q_{N_m}\cap Q'$ with $N_m=\sup \{N\in \natu_0\mid |Q_N \cap Q'|\le c'_s m 2^m \}$ for a suitable  $c'_s>0$. Repeating the proof strategy of Steps 1–3, we can establish the corresponding results when $f$ satisfies equation~\eqref{eqn:fdassumption} with a collection $\cj$ of subsets of $2{:}s$, such that
\begin{equation*}
\sup_{c>0}\sup_{\beta>0}\liminf_{N\to\infty}\frac{|Q_{N,c,\beta,\cj}(f)|}{|Q_{N}\cap Q'|} >0.
    \end{equation*}

    These two arguments extend naturally to the case where, for a subset $u\subseteq 1{:}s$ and thresholds $\{\underline{\kappa}_j\in\natu_0, j\in u\}$,  we have $f^{|\bskappa|}=0$ whenever $|\kappa_j|> \underline{\kappa}_j$ for some $j\in u$. Whether our analysis can be extended to functions $f\notin\mathcal{F}_{\cj}$ not covered by the above cases is an open question that we leave for future investigation.
\end{remark}

\begin{remark}
    It is straightforward to verify the assumptions of Theorem~\ref{thm:step3} when each derivative $f^{|\bskappa|}(\bsx)$ does not change sign over $[0,1]^s$. In this case, equations~\eqref{eqn:exactfk} and \eqref{eqn:Wint} imply
    \begin{align*}
       |\hat{f}(\bsk)|\geq &\Big(\inf_{\bsx\in[0,1]^s}|f^{|\bskappa|}(\bsx)|\Big)\int_{[0,1]^s}\prod_{j=1}^s W_{\kappa_j}(x_j)\rd\bsx \\
       =& \Big(\inf_{\bsx\in[0,1]^s}|f^{|\bskappa|}(\bsx)|\Big) \prod_{j\in 1{:}s,\kappa_j\neq\emptyset} \prod_{\ell\in\kappa_j} 2^{-\ell-1} \\
       = & \Big(\inf_{\bsx\in[0,1]^s}|f^{|\bskappa|}(\bsx)|\Big) 2^{-\Vert \bskappa \Vert_1-\Vert \bskappa \Vert_0}.
    \end{align*}
    For $f$ to belong to $\mathcal{F}_{\cj}$, it therefore suffices for
   \begin{equation}\label{eqn:fdinf}
       \inf_{\bsx\in[0,1]^s}|f^{|\bskappa|}(\bsx)|\geq c_0 \beta^{\Vert\bskappa\Vert_0}\max_{J\in \cj}\prod_{j\in J} (|\kappa_j|)!
   \end{equation}
   to hold for some constants $c_0,\beta>0$. In particular, simple integrands such as  $f(\bsx)=\exp(\sum_{j=1}^s x_j)$ and $f(\bsx)=\prod_{j=1}^s (1-x_j/2)^{-1}$ satisfy Theorem~\ref{thm:step3} by this criterion.

The above argument also suggests a regularization strategy in which we add a function with sufficiently large positive derivatives to $f$ to make the sum meet the conditions of Theorem~\ref{thm:step3}. For instance, if  $f$ satisfies equation~\eqref{eqn:fdassumption} with $\cj=\{\emptyset\}$ and some $K,\alpha>0$, then for any $K'>K$ the sum $f(\bsx)+K' \exp(\alpha\sum_{j=1}^s x_j)$ satisfies equation~\eqref{eqn:fdinf} with $c_0=K'-K$ and $\beta=\alpha$. This regularization is not practically useful, however, because choosing suitable $K'$ and $\alpha$ requires information about the derivatives of $f$. Moreover, the error incurred by integrating $K' \exp(\alpha\sum_{j=1}^s x_j)$ may dominate that of $f$,  leading to unnecessarily wide confidence intervals.

Formulating easily verifiable conditions that allow $f^{|\bskappa|}(\bsx)$ to change sign over $[0,1]^s$ is another open question we leave for future research.
\end{remark}

\subsection{Main results}\label{subsec:main}
As outlined at the beginning of Section~\ref{sec:main}, Steps 1-3 provide all the ingredients needed for the proof of Theorem~\ref{thm:shortver}. Moreover, our analysis yields a quantitative guarantee on the rate at which the quantile of $\mu$ converges to $1/2$.
\begin{theorem}\label{thm:convergetomedian}
    If $f\in C^{\infty}([0,1]^s)$ satisfies the assumptions of Theorem~\ref{thm:step3}, then under the complete random design assumption,
    \begin{equation*}
    \limsup_{m\to\infty} \sqrt{m}\Big|\Pr(\hat{\mu}_{m,\infty}<\mu)+\frac{1}{2}\Pr(\hat{\mu}_{m,\infty}=\mu)-\frac{1}{2}\Big|<\infty.
    \end{equation*}
\end{theorem}
\begin{proof}
    By  $|\Pr(|\mathrm{SUM}_{1,m}|\leq T_m)-\Pr(|\mathrm{SUM}'_{1,m}|\leq T_m)|\leq d_{TV}(\mathrm{SUM}_{1,m},\mathrm{SUM}'_{1,m})$ and Lemma~\ref{lem:proofoutline},
    \begin{align*}
       & \Big|\Pr(\hat{\mu}_{m,\infty}<\mu)+\frac{1}{2}\Pr(\hat{\mu}_{m,\infty}=\mu)-\frac{1}{2}\Big|\\
        \leq & d_{TV}(\mathrm{SUM}_{1,m},\mathrm{SUM}'_{1,m})+ \Pr(|\mathrm{SUM}_{1,m}|\leq |\mathrm{SUM}_{2,m}| )\\
        \leq & d_{TV}(\mathrm{SUM}_{1,m},\mathrm{SUM}'_{1,m})+ \Pr(|\mathrm{SUM}_{2,m}|\ge T_m)+\Pr(|\mathrm{SUM}_{1,m}|\leq T_m)\\
        \leq & 2d_{TV}(\mathrm{SUM}_{1,m},\mathrm{SUM}'_{1,m})+ \Pr(|\mathrm{SUM}_{2,m}|\ge T_m)+\Pr(|\mathrm{SUM}'_{1,m}|\leq T_m).
    \end{align*}
    Theorem~\ref{thm:step1} proves 
    $$\lim_{m\to\infty}\sqrt{m}d_{TV}(\mathrm{SUM}_{1,m},\mathrm{SUM}'_{1,m})= 0.$$ 
    Corollary~\ref{cor:SUM2bound} proves 
    $$\lim_{m\to\infty}\sqrt{m}\Pr(|\mathrm{SUM}_{2,m}|\ge T_m)= 0.$$ Theorem~\ref{thm:SUM1am} together with Theorem~\ref{thm:step3} proves $$\limsup_{m\to\infty} \sqrt{m} \Pr(|\mathrm{SUM}'_{1,m}|\leq T_m)<\infty.$$ 
    Our conclusion follows after combining the above results.
\end{proof}

The next corollary establishes that sample quantiles of $\hat{\mu}_{m,E}$ yield asymptotically valid confidence intervals for $\mu$, provided the precision $E$ grows with $m$ at a suitable rate.

\begin{corollary}\label{cor:confint}
Fix an integer $r\geq 1$ and let $\hat{\mu}^{(1)}_{m,E_m}, \dots, \hat{\mu}^{(r)}_{m,E_m}$, ordered from smallest to largest,  denote $r$ independent estimates from equation~\eqref{eqn:muEdef} with precision $E_m$. If $f\in C^{\infty}([0,1]^s)$ satisfies the assumptions of Theorem~\ref{thm:step3} and $E_m\geq \lambda m^2/s$ with $\lambda=3(\log 2)^2/\pi^2$, then under the complete random design assumption,
\begin{equation}\label{eqn:muEmedian} 
\limsup_{m\to\infty} \sqrt{m}\Big|\Pr(\hat{\mu}_{m,E_m}<\mu)+\frac{1}{2}\Pr(\hat{\mu}_{m,E_m}=\mu)-\frac{1}{2}\Big|<\infty.
\end{equation}
    Moreover, for any indices $1\le \ell\le u\le r$,
    \begin{equation}\label{eqn:coverage}
        \liminf_{m\to\infty}\Pr(\mu\in [\hat{\mu}^{(\ell)}_{m,E_m},\hat{\mu}^{(u)}_{m,E_m}] )\geq F(u-1)-F(\ell-1),
    \end{equation}
    where $F$ denotes the cumulative distribution function of the binomial distribution $\operatorname{Bin}(r,1/2)$.
\end{corollary}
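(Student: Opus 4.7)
The plan is to reduce Corollary~\ref{cor:confint} to Theorem~\ref{thm:convergetomedian} by first bounding the truncation error $\hat{\mu}_E-\hat{\mu}_\infty$ and then running a standard order-statistic argument for the coverage. For the first part, I would couple $\hat{\mu}_E$ with $\hat{\mu}_\infty$ by reusing the same matrices $C_j$ (keeping only the first $E$ rows for $\hat{\mu}_E$) and the same digital shifts $\vec{D}_j$ (truncated to the first $E$ bits), so that each coordinate of $\bsx_i[E]$ and $\bsx_i[\infty]$ differs by at most $2^{-E}$. Because $f\in C^\infty([0,1]^s)$ is Lipschitz on the compact cube, this gives the deterministic bound
\begin{equation*}
|\hat{\mu}_E-\hat{\mu}_\infty|\le L_f\sqrt{s}\,2^{-E}
\end{equation*}
for some constant $L_f$ depending on $f$. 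Since $E\ge N_m$ and $T_m=K\,2^{-N_m+O(m\log m)}$ by~\eqref{eqn:am}, this upper bound is $o(T_m)$, while Theorem~\ref{thm:SUM1am} combined with Theorem~\ref{thm:step3} and Corollary~\ref{cor:SUM2bound}, exactly as in the proof of Theorem~\ref{thm:convergetomedian}, gives $\Pr(|\hat{\mu}_\infty-\mu|\le T_m)=O(1/\sqrt{m})$.

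On the event $\{|\hat{\mu}_\infty-\mu|>T_m\}$ the sign of $\hat{\mu}_\infty-\mu$ survives the perturbation $\hat{\mu}_E-\hat{\mu}_\infty$, so
\begin{equation*}
\{\hat{\mu}_E<\mu\}\triangle\{\hat{\mu}_\infty<\mu\}\subseteq\{|\hat{\mu}_\infty-\mu|\le T_m\}\quad\text{and}\quad\{\hat{\mu}_E=\mu\}\subseteq\{|\hat{\mu}_\infty-\mu|\le T_m\};
\end{equation*}
together with Theorem~\ref{thm:convergetomedian} this yields~\eqref{eqn:muEmedian} at the same $1/\sqrt{m}$ rate. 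For the coverage, set $p_E=\Pr(\hat{\mu}_E\le\mu)$ and $p'_E=\Pr(\hat{\mu}_E\ge\mu)$; equation~\eqref{eqn:muEmedian} forces both to converge to $1/2$. The events $\hat{\mu}^{(\ell)}_E\le\mu$ and $\hat{\mu}^{(u)}_E\ge\mu$ correspond to $\mathrm{Bin}(r,p_E)\ge\ell$ and $\mathrm{Bin}(r,p'_E)\ge r-u+1$, respectively. Since $r$ is fixed, the Binomial tail is continuous in its parameter, so these probabilities converge to $1-F(\ell-1)$ and $F(u-1)$ (the second limit uses the symmetry $F(r-u)=1-F(u-1)$ of $\mathrm{Bin}(r,1/2)$). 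The Bonferroni inequality
\begin{equation*}
\Pr\bigl(\mu\in[\hat{\mu}^{(\ell)}_E,\hat{\mu}^{(u)}_E]\bigr)\ge \Pr(\hat{\mu}^{(\ell)}_E\le\mu)+\Pr(\hat{\mu}^{(u)}_E\ge\mu)-1
\end{equation*}
then delivers~\eqref{eqn:coverage}.

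The only delicate point is a bookkeeping one: verifying that $L_f\sqrt{s}\,2^{-E}$ is genuinely negligible against $T_m$ under the assumption $E\ge N_m$. This forces a careful look at the exponent $2\cj_{\max}\log_2(m)\sqrt{\lambda N_m/s}+D_{s,\alpha,\cj}m$ hiding inside $T_m$, but once this gap is absorbed the remainder of the argument is automatic, and the whole proof becomes essentially a coupling plus a classical order-statistic calculation.
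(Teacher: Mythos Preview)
Your proposal is correct and follows essentially the same route as the paper: the same Lipschitz bound $|\hat{\mu}_E-\hat{\mu}_\infty|\le\sqrt{s}\,2^{-E}\sup\|\nabla f\|_2$ is compared against $T_m$, and the coverage is obtained from the same order-statistic/Bonferroni calculation using $\liminf\Pr(\hat{\mu}_E\le\mu)\ge 1/2$. The paper organizes the first part slightly differently---it absorbs the truncation into $\mathrm{SUM}_2$ and reruns the proof of Theorem~\ref{thm:convergetomedian} with $2T_m$ in place of $T_m$---which sidesteps your intermediate claim $\Pr(|\hat{\mu}_\infty-\mu|\le T_m)=O(1/\sqrt{m})$; that claim is true but needs the observation that Theorem~\ref{thm:step3} equally delivers $\liminf|Q_m(2T_m)|/|Q_{N_m}|>0$.
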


\begin{proof}
Let $\mathrm{SUM}_{2,m,E_m}=\mathrm{SUM}_{2,m}+\hat{\mu}_{m,E_m}-\hat{\mu}_{m,\infty}$. Then 
$$\hat{\mu}_{m,E_m}-\mu=\hat{\mu}_{m,\infty}-\hat{\mu}_{m,E_m}+\hat{\mu}_{m,E_m}-\mu=\mathrm{SUM}_{1,m}+\mathrm{SUM}_{2,m,E_m}.$$ Lemma 1 of \cite{superpolymulti} provides the bound
\begin{equation}\label{eqn:muE}
    |\hat{\mu}_{m,E_m}-\hat{\mu}_{m,\infty}|\leq \frac{\sqrt{s}}{2^{E_m}}\sup_{\bsx\in [0,1]^s}||\nabla f(\bsx)||_2.
\end{equation}
Since $f\in C^{\infty}([0,1]^s)$, its gradient $\nabla f(\bsx)$ is continuous over the compact domain $[0,1]^s$ and therefore has a bounded norm $||\nabla f(\bsx)||_2$. With $E_m\geq \lambda m^2/s$ and $N_m\sim \lambda m^2/s+ 3\lambda m\log_2(m)/s$, we obtain $|\hat{\mu}_{m,E_m}-\hat{\mu}_{m,\infty}|\leq T_m$ for the sequence $T_m$ defined in equation~\eqref{eqn:am}, given $m$ is sufficiently large. Hence, under the conditions of Corollary~\ref{cor:SUM2bound},
\begin{equation*}
\Pr\Big(|\mathrm{SUM}_{2,m,E_m}|\geq 2T_m\Big)\leq m^{d_{s,\alpha}}\exp\left(-c'_s \frac{m}{\log_2(m)^2}\right)
\end{equation*}
for large $m$.
Equation~\eqref{eqn:muEmedian} then follows from a straightforward modification of the proof of Theorem~\ref{thm:convergetomedian}.

Next, using the standard formula for order statistics,
$$\Pr(\hat{\mu}^{(\ell)}_{m,E_m}>\mu)=\sum_{j=0}^{\ell-1} \binom{r}{j} \Pr(\hat{\mu}_{m,E_m} \le \mu)^{j}\Pr(\hat{\mu}_{m,E_m} > \mu)^{r-j},$$
which is monotonically decreasing in $\Pr(\hat{\mu}_{m,E_m} \le \mu)$. From equation~\eqref{eqn:muEmedian}, we have
$$\liminf_{m\to\infty}\Pr(\hat{\mu}_{m,E_m} \le \mu)\geq 1/2,$$
and therefore
\begin{equation}\label{eqn:muEell}
    \limsup_{m\to\infty}\Pr(\hat{\mu}^{(\ell)}_{m,E_m}>\mu)\leq \sum_{j=0}^{\ell-1} \binom{r}{j}\frac{1}{2^r}=F(\ell-1).
\end{equation}
Similarly, 
\begin{equation}\label{eqn:muEu}
    \limsup_{m\to\infty}\Pr(\hat{\mu}^{(u)}_{m,E_m}<\mu)\leq \sum_{j=u}^{r} \binom{r}{j}\frac{1}{2^r}=1-F(u-1).
\end{equation}
Combining equations~\eqref{eqn:muEell} and \eqref{eqn:muEu} gives the desired coverage bound~\eqref{eqn:coverage}.
\end{proof}

In addition to providing asymptotically valid coverage, the interval $[\hat{\mu}^{(\ell)}_{m,E_m},\hat{\mu}^{(u)}_{m,E_m}]$ has length $\hat{\mu}^{(u)}_{m,E_m}-\hat{\mu}^{(\ell)}_{m,E_m}$ that converges in probability to $0$ at a super‑polynomial rate. To prove this, we first generalize \cite[Theorem 2]{superpolymulti} to complete random designs.

    \begin{theorem}\label{thm:superpoly}
        If $f\in C^{\infty}([0,1]^s)$ satisfies equation~\eqref{eqn:fdassumption} for some $K,\alpha>0$ and $\cj=\{J_1,\dots,J_L\}$, then for any $\gamma>0$, we can find a constant $\Gamma$ depending on $s,\alpha,\gamma,\cj_{\max}$ such that under the complete random design assumption,
        $$\limsup_{m\to\infty} m^{\gamma} \Pr\Big(|\hat{\mu}_{m,\infty}-\mu|> K2^{-\lambda m^2/s+\Gamma m\log_2(m)}\Big)\leq 1.$$
    \end{theorem}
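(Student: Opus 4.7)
The plan is to reuse the decomposition $\hat{\mu}_\infty-\mu=\mathrm{SUM}_1+\mathrm{SUM}_2$ from Section~\ref{sec:main} with $K_m=Q_{N_m}$. Corollary~\ref{cor:SUM2bound} already supplies $\Pr(|\mathrm{SUM}_2|\geq T_m)\leq m^{d_{s,\alpha}}\exp(-c_s'm/\log_2(m)^2)=o(m^{-\gamma})$, and the associated $T_m$ is itself of the form $K2^{-\lambda m^2/s+O(m\log_2(m))}$. The whole task therefore reduces to showing that, with probability $1-o(m^{-\gamma})$, one also has $|\mathrm{SUM}_1|\leq K2^{-\lambda m^2/s+O(m\log_2(m))}$.

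To analyze $\mathrm{SUM}_1$ I would introduce an auxiliary cutoff $N_0=N_m-C_\gamma m\log_2(m)$ for a constant $C_\gamma$ depending on $\gamma$, and split $\mathrm{SUM}_1=\mathrm{SUM}_{1,\mathrm{small}}+\mathrm{SUM}_{1,\mathrm{large}}$ according to whether $\bsk\in Q_{N_0}$. A direct computation using equation~\eqref{eqn:QNasymp} gives $|Q_{N_0}|/|Q_{N_m}|\sim m^{-C_\gamma s/(2\lambda)}$, so by the union bound together with $|Q_{N_m}|\leq c_sm2^m$,
\begin{equation*}
\Pr(\mathrm{SUM}_{1,\mathrm{small}}\neq 0)\leq 2^{-m}|Q_{N_0}|=O\bigl(m^{1-C_\gamma s/(2\lambda)}\bigr),
\end{equation*}
which is $o(m^{-\gamma})$ once $C_\gamma$ is chosen large enough relative to $\gamma$.

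For $\mathrm{SUM}_{1,\mathrm{large}}$, two ingredients combine. First, a Chebyshev argument in the spirit of Lemma~\ref{lem:SUM1bound} but with the inflated threshold $t=m^{(1+\gamma)/2+\epsilon}$ in place of $c_sm$ shows that $\#\{\bsk\in Q_{N_m}:Z(\bsk)=1\}\leq m^{1+\gamma/2}$ holds with probability $1-o(m^{-\gamma})$; pairwise independence of the $Z(\bsk)$ is all that is needed. Second, replaying the $\Tilde{Q}$ construction from the proof of Theorem~\ref{thm:sum2case2} with a fixed $\epsilon_m$ (say $\epsilon_m=1$) shows that, with probability $1-O(\exp(-cm))$, every $\bsk$ with $Z(\bsk)=1$ in $Q_{N_m}$ satisfies $|\kappa_j|\leq 3\sqrt{\lambda N_m/s}$ for all $j$. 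For any such ``good'' $\bsk$ lying in $Q_{N_m}\setminus Q_{N_0}$, Corollary~\ref{cor:fkbound} together with Stirling's formula and the bound $\Vert\bskappa\Vert_0\leq\sqrt{2s\Vert\bskappa\Vert_1}$ from equation~\eqref{eqn:kappa1vs0} gives $|\hat{f}(\bsk)|\leq K2^{-\Vert\bskappa\Vert_1+O(m\log_2(m))}\leq K2^{-N_0+O(m\log_2(m))}$, in the same style as equation~\eqref{eqn:fkboundmultiL}. Multiplying by the cardinality bound from the first ingredient produces $|\mathrm{SUM}_{1,\mathrm{large}}|\leq K2^{-N_0+O(m\log_2(m))}=K2^{-\lambda m^2/s+O(m\log_2(m))}$ on the joint event, and a final union bound with the $\mathrm{SUM}_2$ and $\mathrm{SUM}_{1,\mathrm{small}}$ events yields the claimed polynomial decay.

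The hard part is not any single estimate but the bookkeeping of several $O(m\log_2(m))$ contributions: the Stirling factor $\cj_{\max}\sqrt{\lambda N_m/s}\log_2(\sqrt{\lambda N_m/s})$ coming from $\prod_{j\in J}|\kappa_j|!$, the cutoff gap $N_m-N_0=C_\gamma m\log_2(m)$, the polynomial headcount $m^{1+\gamma/2}$, and the $O(m\log_2(m))$ already present in $T_m$. All of these are of order $m\log_2(m)$ with coefficients that may depend on $\gamma$, so a sufficiently large $\Gamma=\Gamma(s,\alpha,\gamma,\cj_{\max})$ absorbs them. The key subtlety is that both $C_\gamma$ and the Chebyshev threshold must grow with $\gamma$, so that each auxiliary bad event has probability strictly below $m^{-\gamma}$ rather than merely $O(1/m)$ as in the plain version of Lemma~\ref{lem:SUM1bound}.
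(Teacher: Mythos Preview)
Your argument is correct and in fact yields the slightly stronger conclusion $\limsup_{m\to\infty}m^\gamma\Pr(\cdot)=0$. The paper, however, takes a shorter route. Rather than keeping the cutoff $N_m$ from equation~\eqref{eqn:Nmdef} and then splitting $\mathrm{SUM}_1$ a second time, it simply moves the cutoff down to $N_{m,\gamma}=\sup\{N:|Q_N|\le m^{-\gamma}2^m\}$, so that $\mathrm{SUM}_{1,\gamma}=0$ with probability at least $1-m^{-\gamma}$ by a single Markov step. The corresponding tail $\mathrm{SUM}_{2,\gamma}$ is then handled directly by Corollary~\ref{cor:SUM2bound} via Remark~\ref{rmk:generalNm}, which notes that the corollary applies to any threshold satisfying $N_m\sim\lambda m^2/s$. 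Your auxiliary level $N_0$ plays exactly the role of the paper's $N_{m,\gamma}$; the difference is that you treat the middle band $Q_{N_m}\setminus Q_{N_0}$ by re-running the $\tilde Q$ and Chebyshev arguments from Theorem~\ref{thm:sum2case2} and Lemma~\ref{lem:SUM1bound} by hand, whereas the paper absorbs that band into $\mathrm{SUM}_2$ and lets the corollary do the work. Your version is more self-contained but longer; the paper's buys brevity by leaning on Remark~\ref{rmk:generalNm}.
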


    The proof is given in Appendix~\appD.

    \begin{corollary}\label{cor:superpoly}
        Under the assumptions of Corollary~\ref{cor:confint}, we can find for any $\gamma>0$ a constant $\Gamma'$ depending on $s,\alpha,\gamma,\cj_{\max}$ such that
        $$\limsup_{m\to\infty} m^{r^*\gamma }\Pr\Big(\hat{\mu}^{(u)}_{m,E_m}-\hat{\mu}^{(\ell)}_{m,E_m}> 4K2^{-\lambda m^2/s+\Gamma' m\log_2(m)}\Big)\leq \binom{r}{r^*},$$
        where $r^*=\min(\ell,r-u+1)$. 
    \end{corollary}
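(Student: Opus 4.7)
The plan is to reduce the event $\{\hat{\mu}^{(u)}_E-\hat{\mu}^{(\ell)}_E>4K2^{-\lambda m^2/s+\Gamma' m\log_2(m)}\}$ to the event that many of the $r$ independent replicates stray far from $\mu$, and then invoke Theorem~\ref{thm:superpoly} (together with the truncation bound~\eqref{eqn:muE}) to show the latter is rare.

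First, applying Theorem~\ref{thm:superpoly} with the given $\gamma$ yields a constant $\Gamma$ such that $\Pr(|\hat{\mu}^i_\infty-\mu|>K2^{-\lambda m^2/s+\Gamma m\log_2(m)})\leq m^{-\gamma}(1+o(1))$ for each $i$. Combined with $|\hat{\mu}^i_E-\hat{\mu}^i_\infty|\leq \sqrt{s}\|\nabla f\|_\infty 2^{-E}\leq \sqrt{s}\|\nabla f\|_\infty 2^{-N_m}$ from \eqref{eqn:muE}, and using $N_m\sim \lambda m^2/s$, I can enlarge $\Gamma$ to some $\Gamma'$ (depending on $s,\alpha,\gamma,\cj_{\max}$) so that the truncation error is absorbed and
$$\Pr\big(|\hat{\mu}^i_E-\mu|>T_m'\big)\leq m^{-\gamma}\quad\text{for all large }m,\quad\text{where }T_m':=2K2^{-\lambda m^2/s+\Gamma' m\log_2(m)}.$$
Call replicate $i$ \emph{bad} if $|\hat{\mu}^i_E-\mu|>T_m'$, and \emph{good} otherwise.

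Next I show that whenever fewer than $r^*=\min(\ell,r-u+1)$ replicates are bad, the interval length is at most $2T_m'=4K2^{-\lambda m^2/s+\Gamma' m\log_2(m)}$. Since $r^*\leq \ell$, at most $r^*-1<\ell$ bad replicates lie among the $\ell$ smallest $\hat{\mu}^i_E$, so at least one of them is good, giving $\hat{\mu}^{(\ell)}_E\geq \mu-T_m'$. Symmetrically, $r^*\leq r-u+1$ forces at least one of the $r-u+1$ largest to be good, yielding $\hat{\mu}^{(u)}_E\leq \mu+T_m'$. Subtracting gives the claimed bound on $\hat{\mu}^{(u)}_E-\hat{\mu}^{(\ell)}_E$.

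Finally, by independence of the $r$ replicates and a union bound over the $\binom{r}{r^*}$ subsets of size $r^*$,
$$\Pr\big(\text{at least }r^*\text{ replicates are bad}\big)\leq \binom{r}{r^*}(m^{-\gamma})^{r^*}=\binom{r}{r^*}m^{-r^*\gamma}$$
for large $m$, which combined with the previous step gives the desired bound upon multiplying by $m^{r^*\gamma}$ and taking $\limsup$. There is no real obstacle here: the argument is purely an order-statistic/union-bound reduction on top of the super-polynomial tail estimate Theorem~\ref{thm:superpoly}. The only bookkeeping to watch is the choice of $\Gamma'$, which must be taken large enough to simultaneously (i) accommodate the factor $2$ passing from $K\to 2K$ in Theorem~\ref{thm:superpoly}'s bound, and (ii) dominate the truncation error $O(2^{-N_m})=2^{-\lambda m^2/s+O(m\log m)}$.
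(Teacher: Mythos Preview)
Your argument is correct and matches the paper's proof essentially step for step: pass from $\hat\mu_\infty$ to $\hat\mu_E$ via the truncation bound~\eqref{eqn:muE}, observe that a long interval forces at least $r^*$ replicates to deviate by more than $2K2^{-\lambda m^2/s+\Gamma' m\log_2(m)}$ from $\mu$, and finish with a union bound over $\binom{r}{r^*}$ subsets. The only cosmetic slip is writing $\Pr(|\hat\mu^i_E-\mu|>T_m')\le m^{-\gamma}$ rather than $\le (1+o(1))m^{-\gamma}$, but the extra $(1+o(1))^{r^*}$ factor disappears in the $\limsup$, so the conclusion is unaffected.
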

    \begin{proof}
    Given $\gamma>0$ and the corresponding $\Gamma$ from Theorem~\ref{thm:superpoly}, we can find a constant $\Gamma'\geq \Gamma$ such that $N_m-\lambda m^2/s+\Gamma' m\log_2(m) \to \infty$ as $m\to\infty$ because of the relation $N_m\sim \lambda m^2/s+3\lambda m\log_2(m)/s$. Since $E_m\geq \lambda m^2/s$, equation~\eqref{eqn:muE} implies $|\hat{\mu}_{m,E_m}-\hat{\mu}_{m,\infty}|\leq K2^{-\lambda m^2/s+\Gamma' m\log_2(m)}$ for large $m$. Together with Theorem~\ref{thm:superpoly}, we obtain
    $$\limsup_{m\to\infty} m^{\gamma} \Pr\Big(|\hat{\mu}_{m,E_m}-\mu|> 2K2^{-\lambda m^2/s+\Gamma' m\log_2(m)}\Big)\leq 1.$$
    In order for either $|\hat{\mu}^{(\ell)}_{m,E_m}-\mu|$ or $|\hat{\mu}^{(u)}_{m,E_m}-\mu|$ to exceed $2K2^{-\lambda m^2/s+\Gamma' m\log_2(m)}$, at least $r^*$ of the $r$ replicates  must have an error larger than this threshold.  A union bound over the $\binom{r}{r^*}$ subsets of size $r^*$ gives
    $$\limsup_{m\to\infty} m^{r^* \gamma } \Pr\Big(\max(|\hat{\mu}^{(\ell)}_{m,E_m}-\mu|,|\hat{\mu}^{(u)}_{m,E_m}-\mu|)> 2K2^{-\lambda m^2/s+\Gamma' m\log_2(m)}\Big)\leq \binom{r}{r^*}.$$
    If both $|\hat{\mu}^{(\ell)}_{m,E_m}-\mu|$ and $|\hat{\mu}^{(u)}_{m,E_m}-\mu|$ are bounded by $2K2^{-\lambda m^2/s+\Gamma' m\log_2(m)}$, then  $\hat{\mu}^{(u)}_{m,E_m}-\hat{\mu}^{(\ell)}_{m,E_m}\leq 4K2^{-\lambda m^2/s+\Gamma' m\log_2(m)} $ and our proof is complete.
    \end{proof}

    \begin{remark}
        If a point estimator for $\mu$ is desired, we can generate $r_m$ groups, each consisting of $r$ independent $\hat{\mu}_{m,E_m}$. For each group we compute the lower bound $\hat{\mu}^{(\ell)}_{m,E_m}$ and upper bound $\hat{\mu}^{(u)}_{m,E_m}$. Taking the median over the $r_m$ groups then yields $\mathrm{Med}(\hat{\mu}^{(\ell)}_{m,E_m})$ and $\mathrm{Med}(\hat{\mu}^{(\ell)}_{m,E_m})$, respectively. By an argument analogous to the proof of \cite[Corollary 3]{superpolymulti}, the mean squared errors of both median estimators converge to $0$ at a super‑polynomial rate provided $r_m\sim m^2$ as $m\to\infty$. Consequently, any value between  $\mathrm{Med}(\hat{\mu}^{(\ell)}_{m,E_m})$ and  $\mathrm{Med}(\hat{\mu}^{(u)}_{m,E_m})$ can serve as a point estimator for $\mu$.  Moreover, if $F(\ell-1)<1/2$ and $F(u-1)>1/2$ for $F$ defined in Corollary~\ref{cor:confint},  then equations~\eqref{eqn:muEell} and \eqref{eqn:muEu} imply that $\Pr(\mu\in [\mathrm{Med}(\hat{\mu}^{(\ell)}_{m,E_m}),\mathrm{Med}(\hat{\mu}^{(u)}_{m,E_m})] )$ converges to $1$ whenever $r_m\to\infty$ as $m\to\infty$.       
    \end{remark}

\section{Generalization to other randomization}\label{sec:general}

Up to this point, our discussion has focused on complete random designs. In applications, however, random linear scrambling is often preferred because the resulting digital nets typically have better low-dimensional projections. For example, the construction of \cite{joe:kuo:2008} explicitly targets the quality of all two-dimensional projections in its optimization. The key difficulty in extending our results in Section~\ref{sec:main} to this setting is that, under random linear scrambling, some rows of the matrices $\{C_j, j\in 1{:}s\}$ in equation~\eqref{eqn:xequalMCiplusD} are not distributed as $\dunif(\{0,1\}^m)$. In this section, we establish the additional assumptions required to generalize our main theorem to such broader randomization schemes.

Recall that in random linear scrambling, each matrix takes the form $C_j=M_j\mathcal{C}_j$, where $M_j\in \{0,1\}^{E\times m}$  is a  random lower-triangular matrix and $\mathcal{C}_j \in \{0,1\}^{m\times m}$ is a fixed generating  matrix. Typically every $\mathcal{C}_j$ is nonsingular, ensuring that no two points in the digital net overlap in their one-dimensional projections. A useful consequence of  full-rank generating matrices is that, except for the first $m$ rows of each $C_j$, the distribution of $C_j$ under random linear scrambling coincides with that under complete random designs. This observation motivates the following notion:

\begin{definition}
    The \textbf{marginal order} of a randomization scheme for the matrices $\{C_j\in \{0,1\}^{E\times m}, j\in 1{:}s\}$ is the smallest integer $d\in \natu_0$ such that, for every $j\in 1{:}s$ and $\ell>d m$, the row $C_j(\ell,:)$ is independently drawn from $\dunif(\{0,1\}^m)$.
\end{definition}

The marginal order is $0$ for complete random designs and $1$ for random linear scrambling provided every generating matrix has full rank. Randomization with higher marginal order becomes relevant when randomizing higher‑order digital nets, such as those introduced in \cite{dick:2011}.

Another key property of random linear scrambling is that for any nonzero $\bsk$, $\Pr(Z(\bsk)=1)\leq 2^{-m+R}$ for some constant $R$ depending on $s$ and the choice of generating matrices \cite[Corollary 1]{superpolymulti}. This motivates the following general definition:

\begin{definition}
For $r\in\natu$, let
$$\mathbb{V}_r=\{V\subseteq \natu^s_*\mid |V|=\rank(V)=r\}.$$
The \textbf{$r$-way rank deficiency} $R_{m,r}$ of a randomization scheme for matrices $\{C_j\in \{0,1\}^{E\times m}, j\in 1{:}s\}$ is defined by
\begin{equation}\label{eqn:Rmr}
   R_{m,r}=mr+\sup_{V\in \mathbb{V}_r}\log_2\Big(\Pr(Z(\bsk)=1 \text{ for all }\bsk\in V)\Big), 
\end{equation}
   where $Z(\bsk)$ is given by equation~\eqref{eqn:Zk}.
\end{definition}

In \cite{pan2024automatic}, a randomization scheme is called asymptotically full-rank if $R_{m,1}$ remains bounded as $m\to\infty$. This property holds for random linear scrambling based on common generating matrices, such as those from Sobol' \cite{sobol67} and Niederreiter \cite{NIEDERREITER198851}. Much less is known about $R_{m,r}$ for $r\geq 2$. One might conjecture that $R_{m,r}\leq r R_{m,1}$, but this fails in general. For instance, \cite[Section 5]{pan2024skewnessrandomizedquasimontecarlo} exhibits a three-dimensional example where $R_{m,1}\leq 5$ while $R_{m,2}\geq m/2 +3$, and $m$ can be taken as an arbitrarily large even number. Fortunately, for most generating matrices the corresponding $R_{m,r}$ grows logarithmically in $m$, as made precise in the following theorem:

\begin{theorem}\label{thm:Rmr}
Let $\mathcal{I}_m$ be the set of nonsingular $m\times m$ matrices over $\mathbb{F}_2$, and let the generating matrices $\{\mathcal{C}_j, j\in 1{:}s\}$ be independently drawn from $\dunif (\mathcal{I}_m)$.
Then for the random linear scrambling based on these generating matrices,
    \begin{equation*}
        \Pr\Big(R_{m,1}\geq 3s\log_2(m+1) \Big) \leq \frac{\exp(2s)}{(m+1)^{2s}},
    \end{equation*}
    and for $r\geq 2$,
    \begin{equation*}
        \Pr\Big(R_{m,r}\geq \max\big(R_{m,r-1},(2^{r}+2r-1)s\log_2(m+1)\big) \Big)\leq \frac{\exp(2sr)}{(m+1)^{2sr}}.
    \end{equation*}
\end{theorem}

The proof is given in Appendix~\appE.

\begin{corollary}\label{cor:randomC}
When $m\geq 3$, there exist generating matrices $\{\mathcal{C}_j,j\in 1{:}s\}$ for which random linear scrambling has marginal order $1$ and satisfies $R_{m,r}\leq (2^r+2r-1) s\log_2(m+1)$ for all $r\in\natu$. 
\end{corollary}

\begin{proof}
    Let $\{\mathcal{C}_j, j\in 1{:}s\}$ be independently sampled from $\dunif(\mathcal{I}_m)$. The marginal order is $1$ because every $\mathcal{C}_j$ is nonsingular. Theorem~\ref{thm:Rmr} along with a union bound over all $r\in\natu$ yields
    \begin{align*}
        \Pr(R_{m,r}\leq (2^r+2r-1) s\log_2(m+1) \text{ for all }r\geq 1)\geq &1-\sum_{r=1}^{\infty}\frac{\exp(2sr)}{(m+1)^{2sr}}.
    \end{align*}
    For $m\geq 3$, we have $(m+1)^{-2s}\exp(2s)<2^{-s}$, so the infinite sum is strictly less than $1$. Hence the probability is positive, guaranteeing the existence of such generating matrices.
\end{proof}

Now we generalize Theorem~\ref{thm:convergetomedian}. Let
\begin{equation}\label{eqn:newNm}
    N_m=\sup \{N\in \natu_0\mid |Q_N|\le (1/2)\log_2(m) 2^m \}.
\end{equation}
By a calculation similar to equations~\eqref{eqn:QNm} and \eqref{eqn:Nm}, $Q_{N_m}\sim (1/2)\log_2(m) 2^m$ and
\begin{equation*}
	N_m\sim \lambda m^2/s +\lambda m\log_2(m)/s \quad\text{for} \quad\lambda=3(\log 2)^2/\pi^2.
\end{equation*}

We first present a generalization of Lemma~\ref{lem:SUM1bound}, whose proof is given in Appendix~\appF.

\begin{lemma}\label{lem:generalSUM1bound}
Let $L_m\subseteq Q_{N_m}$ and $\liminf_{m\to\infty} |L_m|/|Q_{N_m}|>0$.
Under a randomization scheme with marginal order $d\in \natu_0$ and $r$-way rank deficiency $R_{m,r}$ satisfying $\lim_{m\to\infty}R_{m,1}/m=\lim_{m\to\infty}R_{m,2}/m=0$,
 $$\lim_{m\to\infty}\Pr\left(\frac{|L_m|}{2^{m+1}}\leq \sum_{\bsk\in L_m}Z(\bsk)\leq \frac{3|L_m|}{2^{m+1}} \right)=1.$$
\end{lemma}

\begin{theorem}\label{thm:generalver}
    Suppose $f\in C^\infty([0,1]^s)$ satisfies the assumptions of Theorem~\ref{thm:step3}. Then under a randomization scheme with marginal order $d\in \natu_0$ and $r$-way rank deficiency $R_{m,r}$ satisfying $R_{m,r}\leq (2^r+2r-1) s \log_2(m+1)$ for $r\in\natu$,
    $$\lim_{m\to\infty}\left(\Pr(\hat{\mu}_{m,\infty}<\mu)+\frac{1}{2}\Pr(\hat{\mu}_{m,\infty}=\mu)\right)=\frac{1}{2}.$$
\end{theorem}

\begin{proof}
  Let $\mathrm{SUM}_{1,m}, \mathrm{SUM}_{2,m}$ and $\mathrm{SUM}'_{1,m}$ be defined by equations~\eqref{eqn:SUM1} and \eqref{eqn:SUM'1} with  $K_m=Q_{N_m}$ for $N_m$ defined by equation~\eqref{eqn:newNm}. We prove Theorem~\ref{thm:generalver} by following the same three steps outlined at the beginning of Section~\ref{sec:main}.

\textbf{Step 1.} Equation~\eqref{eqn:dTV} implies,  for $\mathcal{V}=\{\bsk\in Q_{N_m}\mid Z(\bsk)=1\}$,
 \begin{align*}
 d_{TV}(\mathrm{SUM}_{1,m},\mathrm{SUM}'_{1,m})
 \leq & \Pr\Big(\mathcal{V}\in I_m, |\mathcal{V}|\le\frac{3}{4}\log_2(m)\Big)+\Pr\Big(|\mathcal{V}|>\frac{3}{4}\log_2(m)\Big).
 \end{align*}
 Lemma~\ref{lem:generalSUM1bound} with $L_m=Q_{N_m}$ gives $\Pr(|\mathcal{V}|>(3/4)\log_2(m))\to 0$ as $m\to\infty$. Next, from equation~\eqref{eqn:VIm}, for sufficiently large $m$,
 \begin{equation*}
    \Pr\Big(\mathcal{V}\in I_m, |\mathcal{V}|\leq \frac{3}{4}\log_2(m) \Big)  \leq \sum_{r=2}^{\lfloor(3/4)\log_2(m)\rfloor} \sum_{W\in I^*_{m,r+1}}\Pr(W\subseteq \mathcal{V}).
    \end{equation*}
    Because each $W\in I^*_{m,r+1}$ has full rank, Lemma~\ref{lem:linearindep} and equation~\eqref{eqn:Rmr} yield
$$\sum_{W\in I^*_{m,r+1}}\Pr(W\subseteq \mathcal{V})\leq |I^*_{m,r+1}|2^{-mr+R_{m,r}}\leq \frac{2^{R_{m,r}}}{(r+1)!}A^r_s N_m^{r/4} r^{-B_s\sqrt{N_m}}.$$
For $r\leq (3/4)\log_2(m)$, $R_{m,r}\leq (m^{3/4}+(3/2)\log_2(m)-1)s\log_2(m+1)$. Consequently, 
\begin{align*}
  & \Pr\Big(\mathcal{V}\in I_m, |\mathcal{V}|\leq \frac{3}{4}\log_2(m) \Big) \\
  \leq &2^{(m^{3/4}+(3/2)\log_2(m)-1)s\log_2(m+1)} \sum_{r=2}^{\lfloor(3/4)\log_2(m)\rfloor}  \frac{(A_s N^{1/4}_m)^r}{(r+1)!} r^{-B_s\sqrt{N_m}}\\
  \leq & 2^{(m^{3/4}+(3/2)\log_2(m)-1)s\log_2(m+1)}\exp(A_s N^{1/4}_m) 2^{-B_s\sqrt{N_m}},
\end{align*}
which converges to $0$ as $m\to\infty$ since $N_m\sim \lambda m^2/s$. This completes Step 1.

\textbf{Step 2.}  The previous argument still applies, except that equation~\eqref{eqn:ZkQtildebound} becomes
\begin{align*}
&\Pr(Z(\bsk)=1 \text{ for some }\bsk\in \Tilde{Q})\leq 2^{-m+R_{m,1}}|\Tilde{Q}|\\ \le &2^{R_{m,1}} m^{K_4} s A'_s(N^*_m)^{1/4}\exp(-B'_s \epsilon_m^2 \sqrt{N^*_m})
\leq  m^{d_{s,\alpha} }\exp(-c'_s \epsilon_m^2 m),    
\end{align*}
using $R_{m,1}\leq 3s\log_2(m+1)$. Corollary~\ref{cor:SUM2bound} therefore remains valid.

\textbf{Step 3.}  Under the assumptions of Theorem~\ref{thm:step3}, equation~\eqref{eqn:Qmam} holds and $|Q_{N_m}\cap \Lambda(T_m)|>c\log_2(m)2^m$ for some $c>0$. Lemma~\ref{lem:generalSUM1bound} with $L_m=Q_{N_m}\cap \Lambda(T_m)$ then yields
$$\lim_{m\to\infty}\Pr(|\mathcal{W}|>(c/2)\log_2(m))=1$$
for $\mathcal{W}=\{\bsk\in Q_{N_m}\cap \Lambda(T_m)\mid Z(\bsk)=1\}$. By an argument analogous to equation~\eqref{eqn:step3},
\begin{align*}
    &\Pr\left(|\mathrm{SUM}'_{1,m}|\le T_m\right)\\
    \leq & \Pr\left(|\mathrm{SUM}'_{1,m}|\le T_m,|\mathcal{W}|> (c/2)\log_2(m)\right) +\Pr\left(|\mathcal{W}|\leq (c/2)\log_2(m)\right)\\
    \leq & \frac{C}{\sqrt{(c/2)\log_2(m)}}+\Pr\left(|\mathcal{W}|\leq (c/2)\log_2(m)\right),
\end{align*}
which converges to $0$ as $m\to\infty$. This completes Step 3.
\end{proof}

\begin{remark}
Under the assumptions of Theorem~\ref{thm:generalver}, analogues of Corollary~\ref{cor:confint}, Theorem~\ref{thm:superpoly} and Corollary~\ref{cor:superpoly} can be established. In particular, if we additionally assume $E_m\geq \lambda m^2/s$ with $\lambda=3(\log 2)^2/\pi^2$, then the quantile interval $[\hat{\mu}^{(\ell)}_{m,E_m},\hat{\mu}^{(u)}_{m,E_m}]$ achieves asymptotic coverage as specified in equation~\eqref{eqn:coverage}.
\end{remark}

\begin{remark}
Corollary~\ref{cor:randomC} shows that generating matrices exist for which random linear scrambling satisfies the assumptions of Theorem~\ref{thm:generalver}. The proof in fact shows that matrices drawn uniformly from $\mathcal{I}_m$ qualify with high probability. If, in addition, $R_{m,r}\leq Cr\log_2(m+1)$ for some $C>0$, one can also establish the stronger $m^{-1/2}$ convergence rate in Theorem~\ref{thm:convergetomedian}. The existence of generating matrices achieving such bounds remains an open question.
\end{remark}

\section{Numerical experiments}\label{sec:experiment}

In this section, we validate our theoretical results on two test integrands. In each experiment, the precision $E$ is chosen according to a small test run to ensure $2^{-E}$ is much smaller than the observed errors.

Below we use CRD as the shorthand for ``complete random designs" and RLS for ``random linear scrambling". The generating matrices for RLS come from \cite{joe:kuo:2008}.

\begin{figure}
    \centering
    \includegraphics[width=0.45\paperwidth]{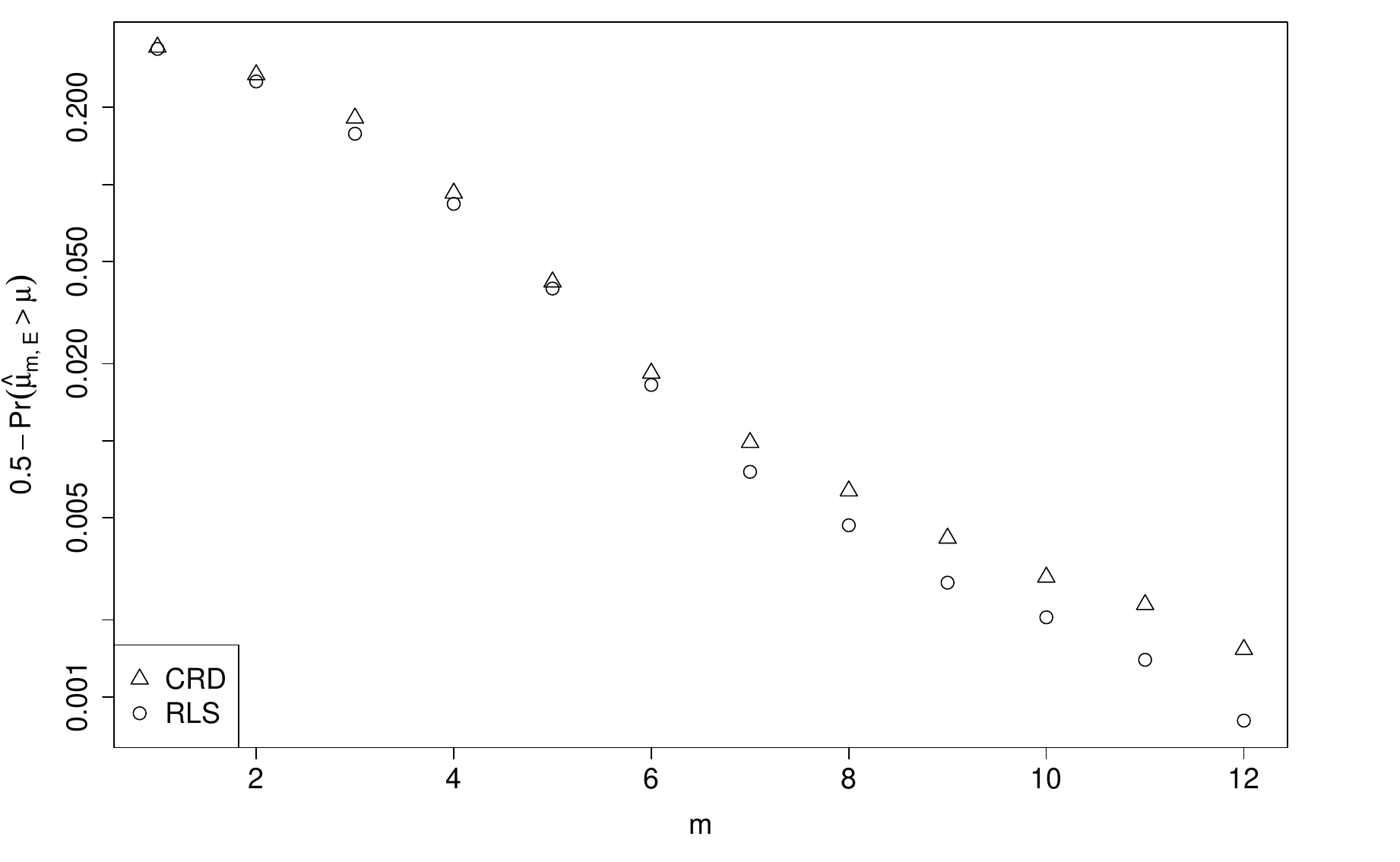}
    \caption{Deviation of $\Pr(\hat{\mu}_{m,E} > \mu)$ from $1/2$ when $f(x)=x^{33}\exp(x)$.}
    \label{fig:onePr}
\end{figure}

\begin{figure}
    \centering
    \includegraphics[width=0.45\paperwidth]{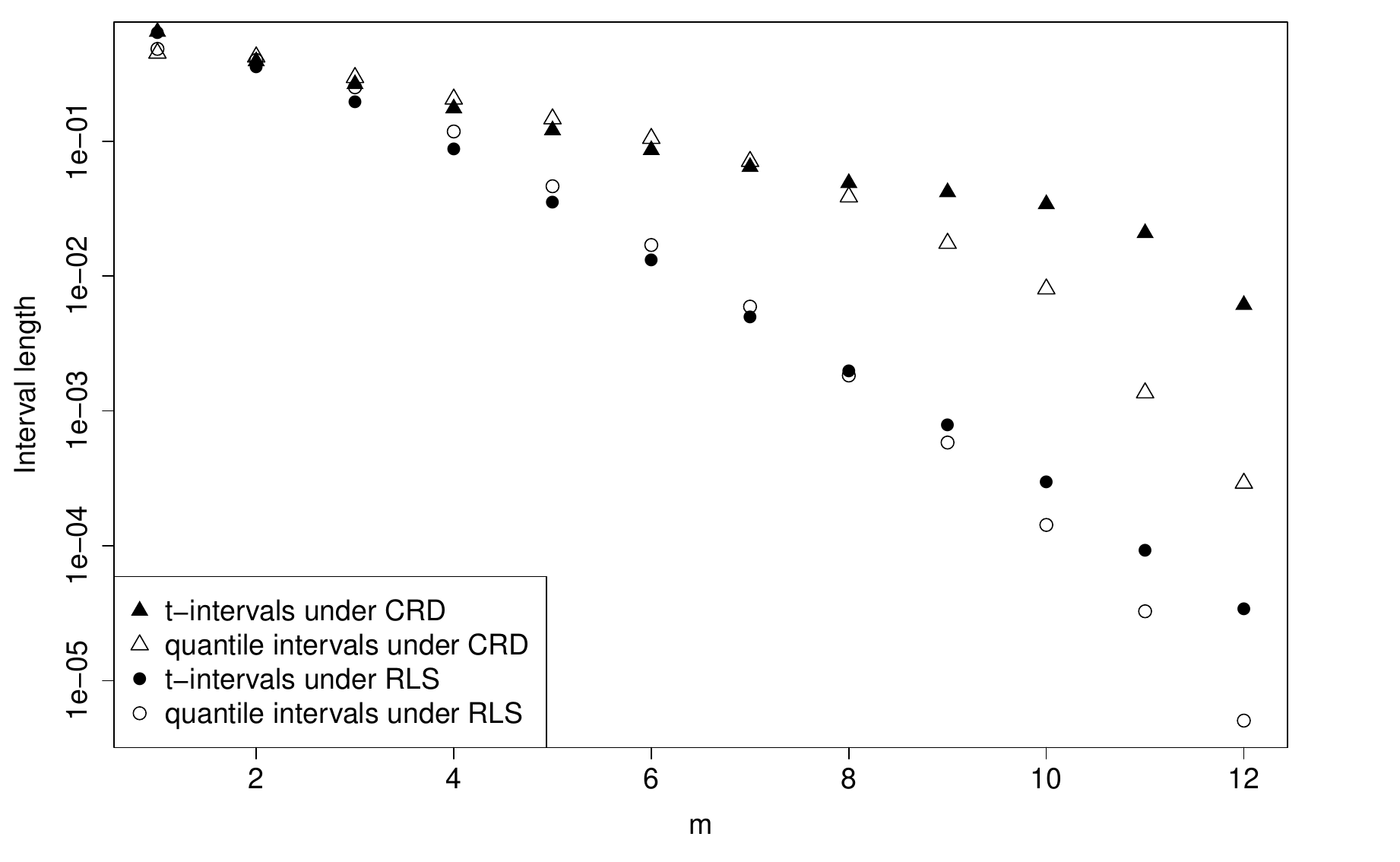}
    \caption{$90$th percentile interval lengths when $f(x)=x^{33}\exp(x)$.}
    \label{fig:onelength}
\end{figure}

\begin{figure}
    \centering
    \includegraphics[width=0.45\paperwidth]{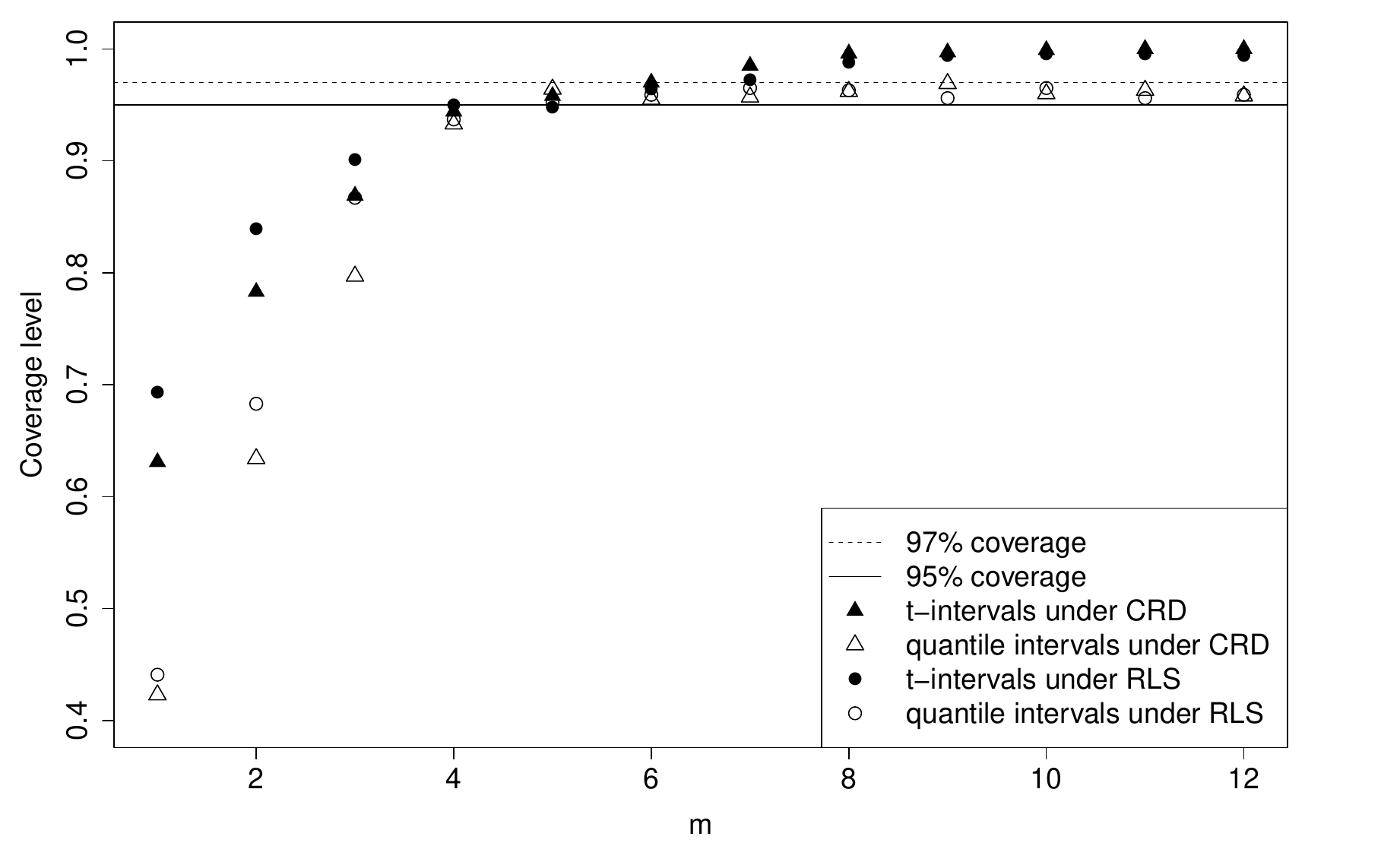}
    \caption{Coverage levels when $f(x)=x^{33}\exp(x)$.}
    \label{fig:onecover}
\end{figure}

\begin{figure}
    \centering
    \includegraphics[width=0.45\paperwidth]{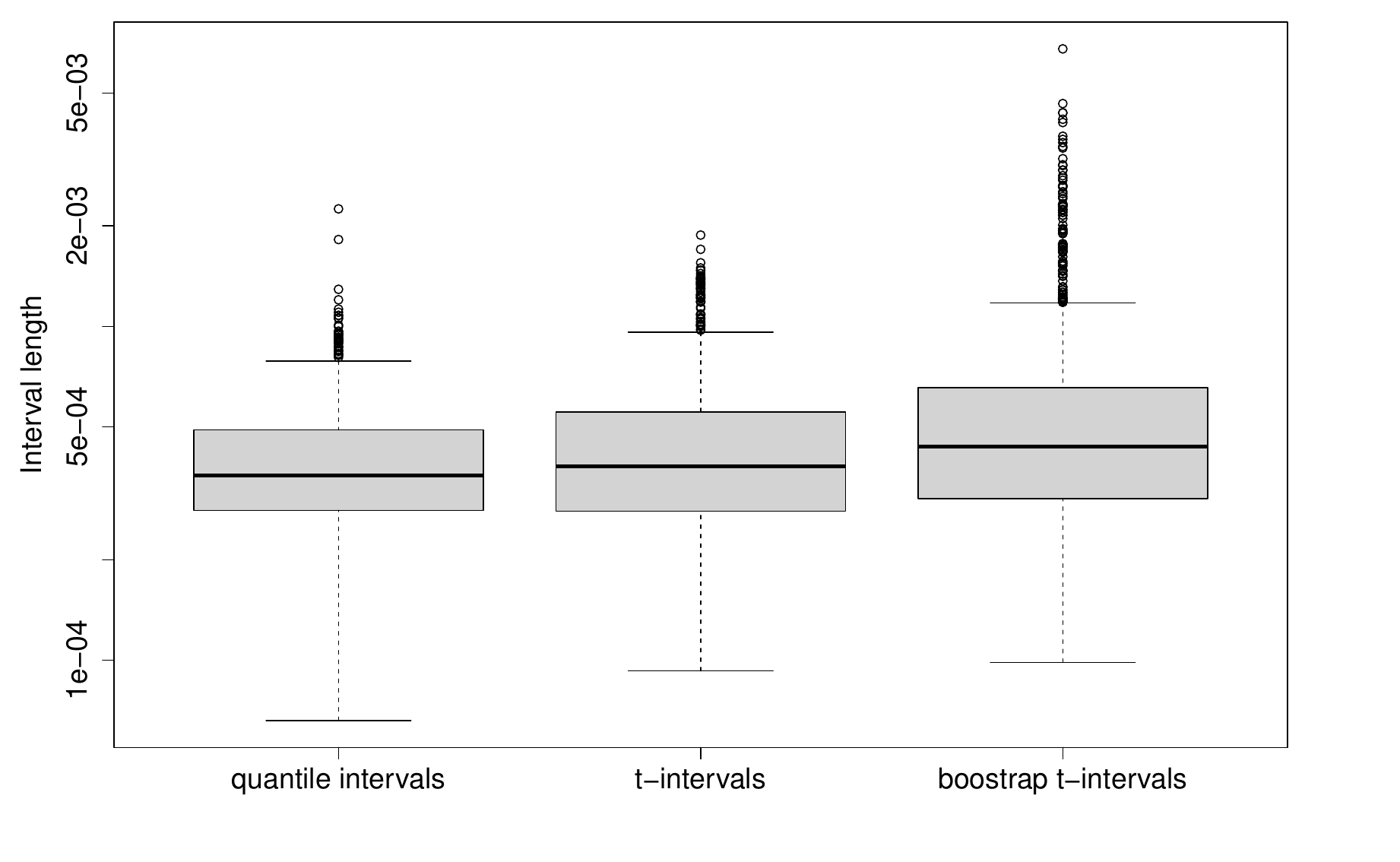}
    \caption{Distribution of interval lengths when $f(\bsx)$ is the Robot Arm function.}
    \label{fig:robotlength}
\end{figure}

\begin{figure}
    \centering
    \includegraphics[width=0.45\paperwidth]{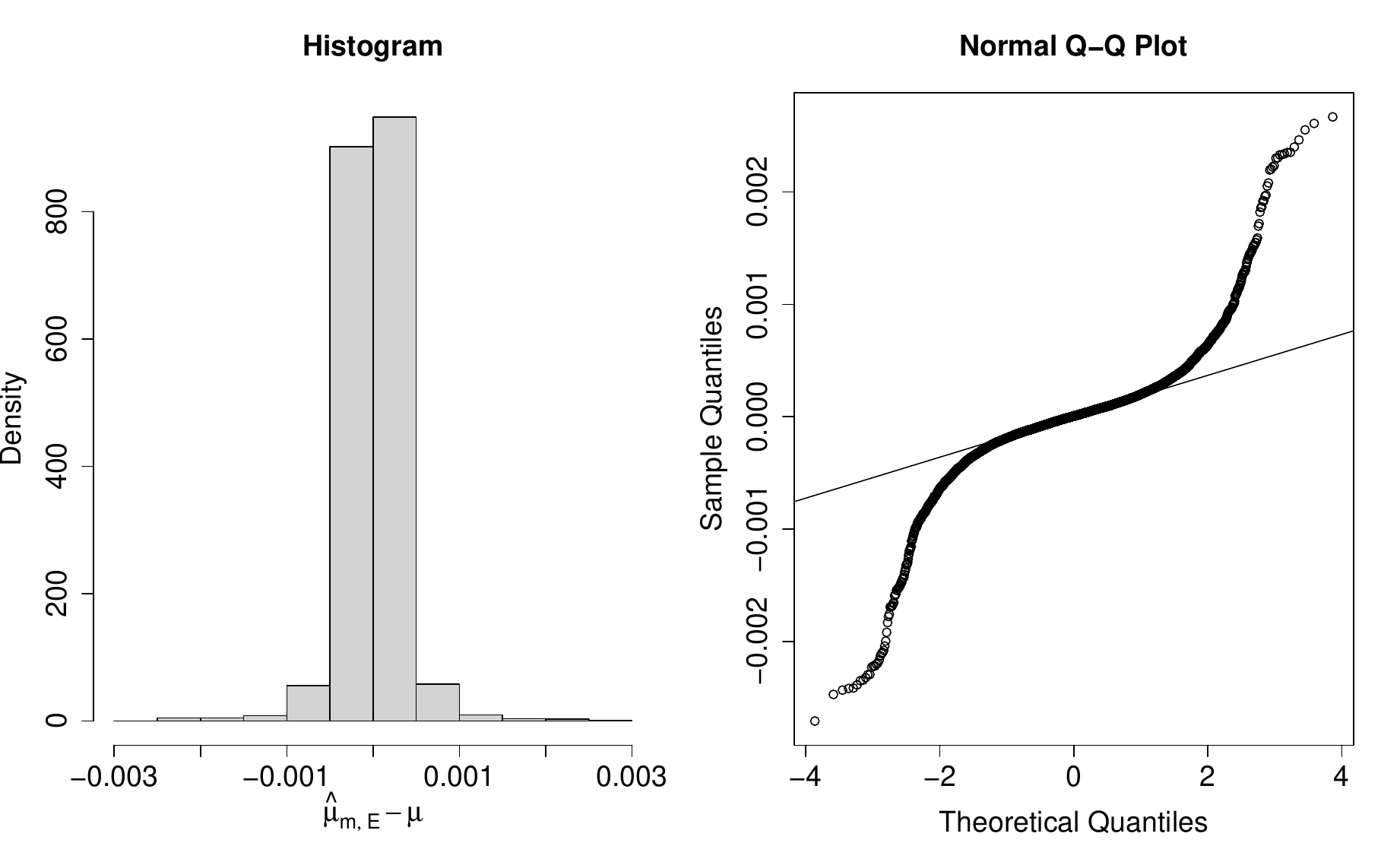}
    \caption{Distribution of errors when $f(\bsx)$ is the Robot Arm function.}
    \label{fig:robotnormality}
\end{figure}

\subsection{One-dimensional example}\label{subsec:expr1}

We first consider a highly skewed one-dimensional integrand $f(x)=x^{33}\exp(x)$. The power $33$ is chosen so that $\Pr(f(U)>\mu)\approx 10\%$ for $U$ following a uniform $[0,1]$ distribution.
In other words, $\Pr(\hat{\mu}_{m,\infty}>\mu)\approx 10\%$ when we set $m=0$ and use one function evaluation to estimate $\mu$.

Figure~\ref{fig:onePr} verifies that $\Pr(\hat{\mu}_{m,E}>\mu)$ converges to $1/2$ under both CRD and RLS.  For $m=1,\dots,12$, each probability is estimated from $8\times10^6$ replicates with $E=64$. Empirically, the convergence appears to be exponential in $m$, which is substantially faster than the $m^{-1/2}$ rate guaranteed by Theorem~\ref{thm:convergetomedian}.

Figures~\ref{fig:onelength} and~\ref{fig:onecover} compare quantile intervals against traditional $t$-intervals. For each $m$, we generate both interval types $1000$ times, each based on $r=9$ independent replicates of $\hat{\mu}_{m,E}$. For the quantile interval we take $\ell=2$ and $u=8$, giving a nominal coverage of approximately $ 96.1\%$ via equation~\eqref{eqn:coverage}. The $t$-interval is $[\bar{\mu}-t\hat{\sigma}/\sqrt{r},\bar{\mu}+t\hat{\sigma}/\sqrt{r}]$ with $\bar{\mu}$ and $\hat{\sigma}$ the sample mean and sample standard deviation of the $9$ replicates. Here, we set $t\approx2.46$ so that the $t$-distribution with $8$ degrees of freedom yields the same nominal coverage.

Figure~\ref{fig:onelength} reports the $90$th percentile of interval lengths. Quantile intervals shrink rapidly as $m$ increases, while $t$-intervals remain substantially wider due to outliers. Figure~\ref{fig:onecover} shows empirical coverage, estimated by the proportion of intervals containing $\mu$. We consider the coverage too low if less than $950$ intervals contain $\mu$, and too high if  more than $970$ do. Quantile intervals maintain the target coverage for $m\ge 5$, whereas $t$-intervals become too wide and their coverage too high for $m\ge 7$. Thus quantile intervals are preferred for constructing confidence intervals from $\hat{\mu}_{m,E}$ in this setting.

We repeated the experiment on the eight-dimensional integrand $f(\bsx)=\prod_{j=1}^8 x_j\exp(x_j)$. The results are provided in Appendix~\appG. The same qualitative pattern emerges: quantile intervals attain nominal coverage as 
$m$ increases, whereas $t$-intervals become overly conservative due to outliers. However, the effect is milder, likely due to the curse of dimensionality.

\subsection{Eight-dimensional example}\label{subsec:expr2}
Next, we consider the eight-dimensional Robot Arm function from \cite{surj:bing:2013}:
$$f(\bsx)=\sqrt{u(\bsx)^2+v(\bsx)^2},$$
where
$$u(\bsx)=\sum_{i=1}^4 L_i(\bsx)\cos\left(\sum_{j=1}^i \theta_j(\bsx)\right) \quad \text{and}\quad v(\bsx)=\sum_{i=1}^4 L_i(\bsx)\sin\left(\sum_{j=1}^i \theta_j(\bsx)\right).$$
Here, $L_i(\bsx)=L_{\min}(1-x_i)+L_{\max}x_i$, so that $L_i(\bsx)$ is uniformly distributed over $[L_{\min},L_{\max}]$, and $\theta_j(\bsx)=2\pi x_{j+4}$, so that $\theta_j(\bsx)$ is uniformly distributed over $[0,2\pi)$. The original function assumes $L_{\min}=0$ and $L_{\max}=1$, but to reduce the influence of the singularity of $\sqrt{x}$ at $x=0$, we instead set $L_{\min}=1$ and $L_{\max}=2$.

We generated $1000$ groups, each consisting of $9$ independent replicates $\hat{\mu}_{m,E}$ under RLS with $m=16$ and $E=32$. For each group, we constructed a quantile interval, a $t$-interval, and a bootstrap $t$-interval \cite{efron1981nonparametric}, using the same parameters $\ell$, $u$, and $t$ as in Subsection~\ref{subsec:expr1}. The true integral value $\mu$ was estimated by the median of $9$ independent replicates $\hat{\mu}_{m,E}$ under RLS with $m=24$ and $E=32$.

Figure~\ref{fig:robotlength} shows the distribution of interval lengths for the three methods. Quantile intervals are the shortest, capturing $\mu$ in $955$ of the $1000$ trials. $T$-intervals are slightly longer and capture $\mu$ $971$ times. Bootstrap $t$-intervals perform the worst, with the longest lengths and only $940$ captures, consistent with the observations in \cite{ci4rqmc}. This is likely due to the heavy tails of $\hat{\mu}_{m,E}$'s distribution (Figure~\ref{fig:robotnormality}), to which bootstrap $t$-intervals are more vulnerable than ordinary $t$-intervals. We refer to \cite{owen2025errorestimationquasimontecarlo,owen2025better} for further discussion.

We also tested the empirical Bernstein intervals \cite{jain2025empirical}. The results are given in Appendix~\appH. These intervals are highly conservative, with lengths ranging from $5\times 10^{-3}$ to $10^{-2}$, and are therefore not recommended in this setting.

\section{Discussion}\label{sec:discussion}

Our analysis has so far focused on infinitely differentiable integrands. A main obstacle in extending our results to finitely differentiable integrands is that the decay of Walsh coefficients is insufficient to decompose $\hat{\mu}_{m,\infty}-\mu$ in a manner compatible with Lemma~\ref{lem:proofoutline}. Consider, for instance, an integrand $f$ with square-integrable dominating mixed derivatives of order $1$ ($f^{|\bskappa|}(\bsx)$ for $|\bskappa|\in \{0,1\}^s$). By \cite[Corollary 3]{pan2024automatic} with $\alpha=0,\lambda=1$, for any $\bsell=(\ell_1,\dots,\ell_s)\in \natu_*^s$,
$$\sum_{\bsk\in B_{\bsell,s}}|\hat{f}(\bsk)|^2\leq C_{f,s} 4^{-\sum_{j=1}^s\ell_j},$$
where $B_{\bsell,s}=\{\bsk\in\natu^s_*\mid \lceil \kappa_j\rceil=\ell_j \text{ for } j\in 1{:}s\}$ and $C_{f,s}$ is a constant depending on $f$ and $s$. Mimicking the proof in Section~\ref{sec:main}, one could set $K_m=Q'_{N_m}$ with
$$Q'_{N}=\Big\{\bsk\in\natu^s_*\Big\vert \bsk\in B_{\bsell,s}\text{ for } \bsell \in \natu_*^s \text{ satisfying } \sum_{j=1}^s\ell_j\leq N\Big\},$$
and attempt to tune $N_m$ to satisfy Lemma~\ref{lem:proofoutline}. Unlike $Q_N$, however, the set $Q'_{N}$ is rich in additive relations (it even forms a $\mathbb{F}_2$-vector space when $s=1$), which restricts the number of summands in $\mathrm{SUM}_{1,m}$ if $d_{TV}(\mathrm{SUM}_{1,m},\mathrm{SUM}'_{1,m})$ is to remain small. Hence, our proof strategy cannot be directly applied to finitely differentiable integrands.

A natural follow-up question concerns the limiting distribution of $\hat{\mu}_{m,\infty}-\mu$. By Theorem~\ref{thm:step1} and Corollary~\ref{cor:SUM2bound}, we can replace $\hat{\mu}_{m,\infty}-\mu$ by $\mathrm{SUM}'_{1,m}$ when studying its limiting distribution. The difficulty lies in the dependencies among $Z(\bsk)$. For large $m$, we conjecture that $\mathrm{SUM}'_{1,m}$ can be approximated by
$$\mathrm{SUM}''_{1,m}=\sum_{\bsk\in Q_{N_m}}Z'(\bsk)S'(\bsk)\hat{f}(\bsk),$$
where each $Z'(\bsk)$ is sampled independently from a Bernoulli distribution with success probability $2^{-m}$. This approximation holds rigorously for polynomial integrands, where the support of non-zero Walsh coefficients is particularly sparse. How to extend this result to general integrands is another challenging question for future research.

A critical limitation of quantile-based confidence intervals lies in their finite-sample coverage guarantees. Assuming $\Pr(\hat{\mu}_{m,E_m}=\mu)=0$, the coverage probability of the interval $[\hat{\mu}^{(\ell)}_{m,E_m},\hat{\mu}^{(u)}_{m,E_m}]$ from Corollary~\ref{cor:confint}, with $r$ odd and a symmetric choice of indices $\ell=r+1-u$, is maximized when $\Pr(\hat{\mu}_{m,E_m}<\mu)=\Pr(\hat{\mu}_{m,E_m}>\mu)=1/2$. Only in this case does the interval reach the nominal level $F(u-1)-F(\ell-1)$. In applications where undercoverage poses significant risks, the slower but conservative $t$-interval may be preferable. If explicit bounds on the integrand are known, the empirical Bernstein interval from \cite{jain2025empirical} yields finite-sample guarantees and faster-than-MC convergence. Designing intervals that simultaneously achieve robust finite-sample coverage and QMC convergence rates without such bounds remains an open problem.

\section*{Acknowledgments}

This work is an expanded version of a chapter from the author's doctoral thesis \cite{mythesis}. The research was completed during the author's postdoctoral fellowship at the Johann Radon Institute for Computational and Applied Mathematics (RICAM). The author is grateful to Professor Art Owen for his mentorship and insightful suggestions, and to RICAM for their fellowship support. 

This work was supported by the U.S. National Science Foundation under grant DMS-2152780 and the Austrian Science Fund (FWF) 
Project DOI 10.55776/P34808. For open access purposes, the author has 
applied a CC BY public copyright license to any author accepted 
manuscript version arising from this submission.

\appendix

\section{Proof of Lemma \ref{lem:proofoutline}}
    First notice that
\begin{align*}
    &\Pr(\hat{\mu}_{m,\infty}<\mu) =  \Pr(\mathrm{SUM}_{1,m}+\mathrm{SUM}_{2,m}<0) \nonumber\\
    \geq & \Pr(\mathrm{SUM}_{1,m}<0 \text{ and } |\mathrm{SUM}_{1,m}|> |\mathrm{SUM}_{2,m}| ) \nonumber\\
    \geq & \Pr(\mathrm{SUM}_{1,m}<0) - \Pr(|\mathrm{SUM}_{1,m}|\leq |\mathrm{SUM}_{2,m}| )\nonumber\\
    \geq & \Pr(\mathrm{SUM}'_{1,m}<0) -d_{TV}(\mathrm{SUM}_{1,m},\mathrm{SUM}'_{1,m})- \Pr(|\mathrm{SUM}_{1,m}|\leq |\mathrm{SUM}_{2,m}| ),
\end{align*}
where we have applied $|\Pr(\mathrm{SUM}'_{1,m}<0)-\Pr(\mathrm{SUM}_{1,m}<0)|\leq d_{TV}(\mathrm{SUM}_{1,m},\mathrm{SUM}'_{1,m})$ in the last inequality.
Similarly, 
\begin{align*}
   &\Pr(\hat{\mu}_{m,\infty}\leq \mu) \geq  \Pr(\mathrm{SUM}_{1,m}\leq 0 \text{ and } |\mathrm{SUM}_{1,m}|\geq |\mathrm{SUM}_{2,m}| ) \\
   \geq & \Pr(\mathrm{SUM}'_{1,m}\leq 0) -d_{TV}(\mathrm{SUM}_{1,m},\mathrm{SUM}'_{1,m})- \Pr(|\mathrm{SUM}_{1,m}|< |\mathrm{SUM}_{2,m}| ).
\end{align*}
Hence,
\begin{align}\label{eqn:muhatlemu}
        &\Pr(\hat{\mu}_{m,\infty}<\mu)+\Pr(\hat{\mu}_{m,\infty}\le\mu)-\left(\Pr(\mathrm{SUM}'_{1,m}<0)+\Pr(\mathrm{SUM}'_{1,m}\leq 0)\right)\\
        \geq &-2d_{TV}(\mathrm{SUM}_{1,m},\mathrm{SUM}'_{1,m})- 2\Pr(|\mathrm{SUM}_{1,m}|\leq |\mathrm{SUM}_{2,m}| ).\nonumber
    \end{align}
  A similar argument shows
\begin{align}\label{eqn:muhatgemu}
        &\Pr(\hat{\mu}_{m,\infty}>\mu)+\Pr(\hat{\mu}_{m,\infty}\ge\mu)-\left(\Pr(\mathrm{SUM}'_{1,m}>0)+\Pr(\mathrm{SUM}'_{1,m}\geq 0)\right)\\
        \geq &-2d_{TV}(\mathrm{SUM}_{1,m},\mathrm{SUM}'_{1,m})- 2\Pr(|\mathrm{SUM}_{1,m}|\leq |\mathrm{SUM}_{2,m}| ). \nonumber
    \end{align}  
    Conditioned on $Z(\bsk)$, $\mathrm{SUM}'_{1,m}$ is a sum of independent symmetric random variables, so
    $$\Pr(\mathrm{SUM}'_{1,m}< 0)+\Pr(\mathrm{SUM}'_{1,m}\le 0)=\Pr(\mathrm{SUM}'_{1,m}>0)+\Pr(\mathrm{SUM}'_{1,m}\geq 0)=1.$$
    Our conclusion then follows from equations~\eqref{eqn:muhatlemu} and \eqref{eqn:muhatgemu}.

\section{Proofs of Lemma \ref{lem:sumk} and Lemma \ref{lem:meankappa}}
This section proves supporting lemmas concerning $Q_N=\{\bsk\in \natu_*^s\mid \Vert\bskappa\Vert_1 \leq N\}$. The proof strategy is inspired by \cite{bert:1993}. To simplify the notation, we write $a_N=O(b_N)$ if $\limsup_{N\to\infty}|a_N|/|b_N|<C$ for some constant $C>0$ and  $a_N=o(b_N)$ if $\lim_{N\to\infty}|a_N|/|b_N|=0$. 

We first construct an importance sampling measure on $\bsk\in \natu_0^s$. Recall that each $k\in\natu_0$ corresponds to $\kappa\subseteq \natu$ through $k=\sum_{\ell\in \kappa}2^{\ell-1}$. Let $L_N(\bsk)$ be the likelihood function of $\bsk$ under the importance sampling measure described by
$$L_N(\bsk)=\prod_{j=1}^s \prod_{\ell=1}^\infty \Bigl(\frac{q_N^{\ell}}{1+q_N^\ell}\Bigr)^{\bsone\{\ell\in \kappa_j\}}\Bigl(\frac{1}{1+q_N^\ell}\Bigr)^{\bsone\{\ell\notin \kappa_j\}}=\frac{q_N^{\Vert\bsk\Vert_1}}{\prod_{\ell=1}^\infty (1+q_N^\ell)^s}$$
with $q_N=\exp(-\pi\sqrt{s/12N})$. The value of $q_N$ is chosen so that $L_N(\bsk)$ closely approximates $\dunif( Q_N)$. Under $L_N(\bsk)$, it is clear that $X_{j\ell}=\bsone\{\ell\in \kappa_j\}$ equals $1$ with probability $q_N^\ell/(1+q_N^\ell)$ and $\{X_{j\ell},j\in 1{:}s, \ell\in \natu\}$ are jointly independent. We use ${\textstyle\Pr},\e, \var$ to denote the probability, expectation and variance when $\bsk$ follows a $\dunif( Q_N)$ distribution and ${\textstyle\Pr}^L,\e^L, \var^L$ to denote those under the importance sampling measure $L_N(\bsk)$.

Suppose we are interested in $\Pr(\bsk\in A)=|A|/|Q_N|$ for a subset $A\subseteq Q_N$.  We can compute it under the importance sampling measure by
\begin{equation}\label{eqn:PrtoEL}
    \Pr(\bsk\in A)=\e^L\Big[\frac{\bsone(\bsk\in A)}{|Q_N|L(\bsk)}\Big]=\e^L\Big[\frac{\bsone(\bsk\in A)}{|Q_N|q_N^{\Vert\bsk\Vert_1}}\prod_{\ell=1}^\infty (1+q_N^\ell)^s\Big].
\end{equation}
Since $\bsk\in A \subseteq Q_N$ implies $\Vert\bsk\Vert_1\leq N$,
\begin{equation}\label{eqn:PrlePrL}
    \Pr(\bsk\in A)\leq \e^L\Big[\frac{\bsone(\bsk\in A)}{|Q_N|q_N^{N}}\prod_{\ell=1}^\infty (1+q_N^\ell)^s\Big]= \frac{{\textstyle\Pr}^L(\bsk\in A)}{|Q_N|q_N^N}\prod_{\ell=1}^\infty (1+q_N^\ell)^s.
\end{equation}
Hence, we can bound $\Pr(\bsk\in A)$ by ${\textstyle\Pr}^L(\bsk\in A)$ times a factor depending only on $N$ and $s$, which is further bounded by the following lemma:
\begin{lemma}\label{lem:likehoodratio} 
When $N\geq 1$,
    $$\frac{1}{|Q_N|q_N^N}\prod_{\ell=1}^\infty (1+q_N^\ell)^s\leq A_s N^{1/4}$$
    with $A_s$ a constant depending on $s$.
\end{lemma}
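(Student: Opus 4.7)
The plan is to reduce the estimate to two ingredients that are already available or classical: the asymptotic formula for $|Q_N|$ stated in equation~\eqref{eqn:QNasymp}, and the classical asymptotic of the infinite product $P(q)=\prod_{\ell\geq 1}(1+q^\ell)$ as $q\to 1^-$. The key observation is that the exponentially large factor $\exp(\pi\sqrt{sN/12})$ appearing in both numerator and denominator of the ratio will cancel, leaving only a polynomial-in-$N$ residue of order $N^{1/4}$, which is exactly what the lemma asserts.

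More concretely, I would proceed in three steps. First, set $t_N=-\log q_N=\pi\sqrt{s/(12N)}$, so that $q_N^N=\exp(-\pi\sqrt{sN/12})$, and from equation~\eqref{eqn:QNasymp} obtain
$$|Q_N|\,q_N^N\sim D_s N^{-1/4}\exp\bigl(\pi\sqrt{sN/12}\bigr),$$
since $\pi\sqrt{sN/3}-\pi\sqrt{sN/12}=\pi\sqrt{sN/12}$. Second, I would establish that
$$\log P(q)=\frac{\pi^2}{12\,t}+\tfrac{1}{2}\log 2+O(t)\quad\text{as }t=-\log q\to 0^+.$$
This is standard: writing $P(q)=\prod(1-q^{2\ell})/\prod(1-q^\ell)$ and invoking the Hardy–Ramanujan asymptotic (equivalently, the modular transformation of the Dedekind eta function) for $\log\prod(1-q^\ell)\sim -\pi^2/(6t)+\tfrac{1}{2}\log(t/2\pi)$ gives the stated formula after cancellation. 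Raising to the $s$th power and inserting $t=t_N$ yields
$$\prod_{\ell=1}^\infty(1+q_N^\ell)^s\sim 2^{s/2}\exp\bigl(\pi\sqrt{sN/12}\bigr).$$

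Third, combining these two asymptotics, the $\exp(\pi\sqrt{sN/12})$ factors cancel and the ratio in the lemma is asymptotically $(2^{s/2}/D_s)\,N^{1/4}$, which implies the claimed bound for all $N$ larger than some threshold $N_0(s)$. For the finitely many $N\in\{1,\dots,N_0(s)\}$, the left-hand side is just a positive finite number, so enlarging $A_s$ if necessary handles these cases.

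The only nontrivial ingredient is the asymptotic for $P(q)$, and even there one does not need the full Rademacher expansion; an upper bound of the form $\log P(q)\leq \pi^2/(12t)+C$ for small $t$ suffices, and this is elementary via Euler–Maclaurin applied to $\sum_\ell \log(1-e^{-\ell t})$ using $\int_0^\infty\log(1-e^{-x})\,\mathrm{d}x=-\pi^2/6$. Thus I expect the main bookkeeping issue to be matching the constants carefully rather than any deep obstacle: the proof is really a verification that the importance sampling density $L_N$ has been tuned so that the normalizing constant $\prod(1+q_N^\ell)^s$ matches $|Q_N|q_N^N$ up to the entropy factor $N^{1/4}$, which reflects the width of the distribution of $\Vert\bskappa\Vert_1$ under $L_N$ around its mean $N$.
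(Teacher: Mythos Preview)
Your proposal is correct and follows essentially the same three-step structure as the paper: compute $q_N^N$, invoke the asymptotic~\eqref{eqn:QNasymp} for $|Q_N|$, and bound the infinite product so that the factors $\exp(\pi\sqrt{sN/12})$ cancel. The only difference is that you reach for heavier machinery on the product than is needed: the paper bounds $\sum_{\ell\ge 1}\log(1+q_N^\ell)$ directly by the monotone integral comparison $\sum_{\ell\ge 1}g(\ell)\le\int_0^\infty g(x)\,\mathrm{d}x$ with $g(x)=\log(1+e^{-\pi x\sqrt{s/12N}})$, using $\int_0^\infty\log(1+e^{-u})\,\mathrm{d}u=\pi^2/12$, which yields $\prod_\ell(1+q_N^\ell)^s\le\exp(\pi\sqrt{sN/12})$ in one line---no eta function, no Euler--Maclaurin, no detour through $\prod(1-q^\ell)$.
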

\begin{proof}
    First we write
    \begin{align*}
        \prod_{\ell=1}^\infty (1+q_N^\ell)^s=\exp\Big(s\sum_{\ell=1}^\infty \log(1+q^\ell_N)\Big).
    \end{align*}
    Because $\log(1+q^\ell_N)$ is monotonically decreasing in $\ell$, 
    \begin{align*}
        \sum_{\ell=1}^\infty \log(1+q^\ell_N)\leq&\int_0^\infty \log\Big(1+\exp(-\pi\ell\sqrt{s/12N})\Big)\rd \ell\\
        =&\frac{1}{\pi}\sqrt{\frac{12N}{s}}\int_0^\infty \log\Big(1+\exp(-\ell)\Big)\rd \ell\\
        =&\pi\sqrt{\frac{N}{12s}}.
    \end{align*}
    Hence
    \begin{equation*}
        \prod_{\ell=1}^\infty (1+q_N^\ell)^s\leq \exp\Big(\pi\sqrt{\frac{sN}{12}}\Big).
    \end{equation*}
    Our conclusion then follows from $q^N_N=\exp(-\pi\sqrt{sN/12})$ and equation~\eqref{eqn:QNasymp}.
\end{proof}

Now we are ready to prove Lemma \ref{lem:meankappa}.
\begin{proof}[Proof of Lemma \ref{lem:meankappa}]
    Equation~\eqref{eqn:PrlePrL} and Lemma~\ref{lem:likehoodratio} imply
    \begin{equation}\label{eqn:PrlePrLforkappa}
        \Pr\Bigl(\Big|\frac{|\kappa_j|}{\sqrt{\lambda N/s}}-2\Big|>\epsilon\Bigr)\leq A_sN^{1/4}{\textstyle\Pr}^L \Bigl(\Big|\frac{|\kappa_j|}{\sqrt{\lambda N/s}}-2\Big|>\epsilon\Bigr).
    \end{equation}
    Because 
        $$|\kappa_j|=\sum_{\ell\in \natu}\bsone\{\ell\in \kappa_j\}=\sum_{\ell\in \natu}X_{j\ell},$$
    we have
    $$\e^L[|\kappa_j|]=\sum_{\ell\in \natu}\frac{q_N^\ell}{1+q_N^\ell}=\sum_{\ell\in \natu}\frac{\exp(-\pi\ell\sqrt{s/12N})}{1+\exp(-\pi\ell\sqrt{s/12N})},$$
     $$\var^L(|\kappa_j|)=\sum_{\ell\in \natu}\frac{q_N^\ell}{(1+q_N^\ell)^2}=\sum_{\ell\in \natu}\frac{ \exp(-\pi\ell\sqrt{s/12N})}{(1+\exp(-\pi\ell\sqrt{s/12N}))^2}.$$       
     Since $q_N^\ell/(1+q_N^\ell)$ is monotonically decreasing in $\ell$,
     $$\int_1^\infty \frac{\exp(-\pi\ell\sqrt{s/12N})}{1+\exp(-\pi\ell\sqrt{s/12N})}\rd \ell\leq \e^L[|\kappa_j|]\leq \int_0^\infty \frac{\exp(-\pi\ell\sqrt{s/12N})}{1+\exp(-\pi\ell\sqrt{s/12N})}\rd \ell.$$
     Recall that $\lambda=3\log(2)^2/\pi^2$. The difference between the above two integral is $O(1)$ and 
     $$\int_0^\infty \frac{\exp(-\pi\ell\sqrt{s/12N})}{1+\exp(-\pi\ell\sqrt{s/12N})}\rd \ell=\frac{\log(2)}{\pi}\sqrt{\frac{12N}{s}}=2\sqrt{\frac{\lambda N}{s}},$$
     so 
     \begin{equation}\label{eqn:ELkappaasym}
         \e^L[|\kappa_j|]\sim 2\sqrt{\frac{\lambda N}{s}}+O(1).
     \end{equation}
         Similarly, because $q^\ell_N<1$ and $x/(1+x^2)$ is monotonically increasing in $x$ over $[0,1]$, we know $q_N^\ell/(1+q_N^\ell)^2$ is monotonically decreasing in $\ell$ over $\ell\geq 0$ and 
         \begin{equation*}
             \int_1^\infty \frac{ \exp(-\pi\ell\sqrt{s/12N})}{(1+\exp(-\pi\ell\sqrt{s/12N}))^2}\rd\ell\leq \var^L(|\kappa_j|)\leq \int_0^\infty \frac{ \exp(-\pi\ell\sqrt{s/12N})}{(1+\exp(-\pi\ell\sqrt{s/12N}))^2}\rd\ell.
         \end{equation*} 
    The difference is again $O(1)$ and 
    $$\int_0^\infty \frac{ \exp(-\pi\ell\sqrt{s/12N})}{(1+\exp(-\pi\ell\sqrt{s/12N}))^2}\rd\ell=\frac{1}{\pi}\sqrt{\frac{3 N}{s}},$$
    so
    \begin{equation}\label{eqn:varLasym}
        \var^L(|\kappa_j|)\sim \frac{1}{\pi}\sqrt{\frac{3 N}{s}}+O(1).
    \end{equation}
    By Bernstein's inequality,
    $$\textstyle{\Pr^L}\Big(\Big||\kappa_j|-\e^L[|\kappa_j|]\Big|>t\Big)\leq 2\exp\Bigl(-\frac{t^2/2}{\var^L(|\kappa_j|)+t/3}\Bigr)$$
    for any $t>0$. Setting $t=\epsilon\sqrt{\lambda N/4s}$, we get
    $$
    \textstyle{\Pr^L}\Big(\Big||\kappa_j|-\e^L[|\kappa_j|]\Big|>\epsilon\sqrt{ \frac{\lambda N}{4s}}\Big)\leq 2\exp\Bigl(-\frac{\epsilon^2\lambda N/8s}{\var^L(|\kappa_j|)+\epsilon\sqrt{\lambda N/4s}/3}\Bigr)
    $$
    or equivalently
        $${\textstyle\Pr}^L\Big(\Big|\frac{|\kappa_j|}{\sqrt{\lambda N/s}}-\frac{\e^L[|\kappa_j|]}{\sqrt{\lambda N/s}}\Big|>\frac{\epsilon}{2}\Big)\leq 2\exp\Bigl(-\sqrt{\frac{\lambda N}{s}}\frac{\epsilon^2/8}{\var^L(|\kappa_j|)/\sqrt{\lambda N/s}+\epsilon/6}\Bigr).$$ 
    Because $\epsilon < 1$ and $\var^L(|\kappa_j|)\sim \pi^{-1}\sqrt{3 N/s} $, the right hand side can be bounded by $2\exp(-B_s \epsilon^2 \sqrt{N})$ for some $B_s>0$. If further $|\e^L[|\kappa_j|]/\sqrt{\lambda N/s}-2|< \epsilon/2$, 
$${\textstyle\Pr}^L \Bigl(\Big|\frac{|\kappa_j|}{\sqrt{\lambda N/s}}-2\Big|>\epsilon\Bigr)\leq {\textstyle\Pr}^L\Big(\Big|\frac{|\kappa_j|}{\sqrt{\lambda N/s}}-\frac{\e^L[|\kappa_j|]}{\sqrt{
\lambda N/s}}\Big|>\frac{\epsilon}{2}\Big)\leq 2\exp(-B_s \epsilon^2 \sqrt{N})$$
and we have proven the conclusion in view of equation~\eqref{eqn:PrlePrLforkappa}.
On the other hand, if $|\e^L[|\kappa_j|]/\sqrt{\lambda N/s}-2|\geq \epsilon/2$, equation~\eqref{eqn:ELkappaasym} implies
$$\epsilon^2\sqrt{N}\leq \frac{4}{\sqrt{\lambda/s}}\Big|\e^L[|\kappa_j|]-2\sqrt{\frac{\lambda N}{s}}\Big|=O(\sqrt{s}).$$
So by decreasing $B_s$ if necessary, we can assume $B_s\epsilon^2\sqrt{N}\leq 1$ for any $\epsilon$ satisfying $|\e^L[|\kappa_j|]/\sqrt{\lambda N/s}-2|\geq \epsilon/2$. After increasing $A_s$ if necessary so that $A_s\geq \exp(1)$,
$$A_sN^{1/4}\exp(-B_s \epsilon^2 \sqrt{N})\geq A_s\exp(-1)\geq 1$$
and the conclusion is trivially true.
\end{proof}

The proof of Lemma \ref{lem:sumk} is similar. By an abuse of notation, we let $\Pr$ and ${\textstyle\Pr}^L$ be the probability when $\bsk_1,...\bsk_r$ are sampled independently from $\dunif( Q_N)$ and $L(\bsk)$, respectively. An analogous argument using the importance sampling trick shows for any subset $A \subseteq (Q_N)^r$
\begin{equation}\label{eqn:rversionPrlePrL}
    \Pr((\bsk_1,\dots,\bsk_r)\in A)\leq {\textstyle\Pr}^L((\bsk_1,\dots,\bsk_r)\in A)\Bigl(\frac{1}{|Q_N|q_N^N}\prod_{\ell=1}^\infty (1+q_N^\ell)^s\Bigr)^r.
\end{equation}

\begin{proof}[Proof of Lemma \ref{lem:sumk}]
To simplify our notation, we write $\bsk^\oplus=\oplus_{i=1}^r\bsk_i$ with components $(k^\oplus_1,\dots, k^\oplus_s)$. By equation~\eqref{eqn:rversionPrlePrL} and Lemma~\ref{lem:likehoodratio},
    \begin{equation}\label{eqn:PrlePrLforsumk}
        \Pr\big(\bsk^\oplus\in Q_N\big)\leq A^r_s N^{r/4}{\textstyle\Pr}^L\big(\bsk^\oplus\in Q_N\big)\leq A^r_s N^{r/4}{\textstyle\Pr}^L\big(\Vert\bsk^\oplus\Vert_1\leq N\big).
    \end{equation}
    By the definition of $\bsk^\oplus$, $X^\oplus_{j\ell}=\bsone\{\ell\in \kappa^\oplus_j\}$ equals $1$ if and only if $\ell\in \kappa_j$ for an odd number of $\bsk$ among $\bsk_1,...,\bsk_r$.
    By a binomial distribution with success probability $q_N^\ell/(1+q_N^\ell)$,
    \begin{align}\label{eqn:oddbinomial}
        {\textstyle\Pr}^L(X^\oplus_{j\ell}=1)&=\sum_{j=1}^{\lceil r/2\rceil}{r\choose 2j-1}\Big(\frac{q_N^\ell}{1+q_N^\ell}\Big)^{2j-1}\Big(\frac{1}{1+q_N^\ell}\Big)^{r-2j+1}\\
        &=\frac{1}{2}-\frac{1}{2}\Bigl(\frac{1-q_N^\ell}{1+q_N^\ell}\Bigr)^r. \nonumber
    \end{align}
    Also notice that $\{X^\oplus_{j\ell}, j\in1{:}s, \ell\in \natu\}$ are jointly independent under $L(\bsk)$ and $$\Vert\bsk^\oplus\Vert_1=\sum_{j=1}^s\sum_{\ell\in\natu}\ell X^\oplus_{j\ell}.$$
 By Markov's inequality, for any $t>0$
    \begin{align}\label{eqn:chernoffbound}
        {\textstyle\Pr}^L\Big(\Vert\bsk^\oplus\Vert_1\leq N\big)&={\textstyle\Pr}^L\big(\exp\Big(-t\sum_{j=1}^s\sum_{\ell\in\natu}\ell X^\oplus_{j\ell}\Big)\geq e^{-tN}\Big)\\
        & \leq e^{tN} \e^L\Big[\exp\Big(-t\sum_{j=1}^s\sum_{\ell\in\natu}\ell X^\oplus_{j\ell}\Big)\Big]\nonumber\\
        & =e^{tN}\prod_{j=1}^s\prod_{\ell\in N}\Bigl(1-{\textstyle\Pr}^L(X^\oplus_{j\ell}=1)(1-e^{-t\ell })\Bigr)\nonumber\\
        &\leq \exp\Bigl(tN-s\sum_{\ell\in\natu}{\textstyle\Pr}^L(X^\oplus_{j\ell}=1)(1-e^{-t\ell })\Bigr).\nonumber
    \end{align}
    Because ${\textstyle\Pr}^L(X^\oplus_{j\ell}=1)$ is monotonically increasing in $r$ and $r\geq 2$,
    \begin{align*}
        \sum_{\ell\in\natu}{\textstyle\Pr}^L(X^\oplus_{j\ell}=1)(1-e^{-t\ell })
        &\geq \sum_{\ell\in\natu}\Bigl(\frac{1}{2}-\frac{1}{2}\Bigl(\frac{1-q_N^\ell}{1+q_N^\ell}\Bigr)^2\Bigr)(1-e^{-t\ell })\\
        &=\sum_{\ell\in\natu}\frac{1}{2}(1-e^{-t\ell })\frac{(1+q_N^\ell)^2-(1-q_N^\ell)^2}{(1+q_N^\ell)^2}\\
        &=\sum_{\ell\in\natu}2(1-e^{-t\ell })\frac{q_N^\ell}{(1+q_N^\ell)^2}.
    \end{align*}
    Setting $t=-\alpha\log(q_N)$ for $\alpha> 0$, which we will tune later, we have
    \begin{align*}
    \sum_{\ell\in\natu}2(1-e^{-t\ell })\frac{q_N^\ell}{(1+q_N^\ell)^2}
        &=2\sum_{\ell\in\natu}\frac{q_N^\ell}{(1+q_N^\ell)^2}-2\sum_{\ell\in\natu}\frac{q_N^{\alpha\ell}q_N^\ell}{(1+q_N^\ell)^2}.
    \end{align*}
    Similar to equation~\eqref{eqn:varLasym}, because both $q_N^\ell/(1+q_N^\ell)^2$ and $q_N^{\alpha\ell}q_N^\ell/(1+q_N^\ell)^2$ are monotonically decreasing in $\ell$ over $\ell\geq 0$,
    \begin{align*}
       \sum_{\ell\in\natu}\frac{q_N^\ell}{(1+q_N^\ell)^2}\sim &\int_0^\infty \frac{ \exp(-\pi\ell\sqrt{s/12N})}{(1+\exp(-\pi\ell\sqrt{s/12N}))^2}\rd\ell+O(1)\\
       =&\frac{1}{\pi}\sqrt{\frac{12 N}{s}}\int_0^\infty \frac{ \exp(-\ell)}{(1+\exp(-\ell))^2}\rd\ell+O(1)
    \end{align*}
    and
    \begin{align*}
        \sum_{\ell\in\natu}\frac{q_N^{\alpha\ell}q_N^\ell}{(1+q_N^\ell)^2}\sim &\int_0^\infty \frac{ \exp(-\pi(\alpha+1)\ell\sqrt{s/12N})}{(1+\exp(-\pi\ell\sqrt{s/12N}))^2}\rd\ell+O(1)\\
        =&\frac{1}{\pi}\sqrt{\frac{12 N}{s}}\int_0^\infty \frac{ \exp(-(\alpha+1)\ell)}{(1+\exp(-\ell))^2}\rd\ell+O(1). 
    \end{align*}
    Combining $t=-\alpha\log(q_N)=\alpha \pi\sqrt{s/12N}$ with the above equations, we get
    $$tN-s\sum_{\ell\in\natu}{\textstyle\Pr}^L(X^\oplus_{j\ell}=1)(1-e^{-t\ell })\leq tN-2s\sum_{\ell\in\natu}\frac{q_N^\ell-q_N^{\alpha\ell}q_N^\ell}{(1+q_N^\ell)^2}\sim c(\alpha)\sqrt{sN}+O(s)$$
    if $c(\alpha)\neq 0$ with 
    $$c(\alpha)=\alpha\frac{\pi}{\sqrt{12}}-\frac{4\sqrt{3 }}{\pi}\int_0^\infty \frac{\exp(-\ell)- \exp(-(\alpha+1)\ell)}{(1+\exp(-\ell))^2}\rd\ell.$$
    Because $c(\alpha)\to\infty$ as $\alpha\to \infty$ and
    $$c'(\alpha)=\frac{\pi}{\sqrt{12}}-\frac{4\sqrt{3}}{\pi}\int_0^\infty \frac{ \ell\exp(-(\alpha+1)\ell)}{(1+\exp(-\ell))^2}\rd\ell$$
    is strictly increasing in $\alpha$, we see $c(\alpha)$ has a unique minimum $\alpha^*$ over $\alpha\geq 0$. Furthermore,
     $$c'(0)=\frac{\pi}{\sqrt{12}}-\frac{4\sqrt{3}}{\pi}\int_0^\infty \frac{ \ell\exp(-\ell)}{(1+\exp(-\ell))^2}\rd\ell=\frac{\pi}{\sqrt{12}}-\frac{4\sqrt{3}\log(2)}{\pi}<0,$$
     so $\alpha^*>0$ and $c(\alpha^*)<0$. A numerical approximation using Mathematica shows $\alpha^*\approx 0.24$ and $c(\alpha^*)< -0.066$.
By choosing $t=-\alpha^*\log(q)$, we have shown
    $$\exp\Bigl(tN-s\sum_{\ell\in\natu}{\textstyle\Pr}^L(X^\oplus_{j\ell}=1)(1-e^{-t\ell })\Bigr)\leq \exp\Big(c(\alpha^*)\sqrt{sN}+O(s)\Big).$$
    Putting together equation~\eqref{eqn:PrlePrLforsumk} and equation~\eqref{eqn:chernoffbound}, we get
    $$\Pr\big(\bsk^\oplus\in Q_N\big)\leq A^r_sN^{r/4}\exp\Big(c(\alpha^*)\sqrt{sN}+O(s)\Big).$$
    For a threshold $R_s\geq 2$ that we will determine later, we can choose $B_s$ small enough so that $B_s \log(R_s)\leq -c(\alpha^*)\sqrt{s}$. By increasing $A_s$ if necessary to account for the $\exp(O(s))$ term, the conclusion holds for all $N\geq 1$ and $r\leq R_s$.

        It remains to show the conclusion holds for some $A_s, B_s>0$ when $r> R_s$ for some threshold $R_s$. Let $\ell^*$ be the largest $\ell\in \natu$ for which ${\textstyle\Pr}^L(X^\oplus_{j\ell}=1)\geq 1/4$. We conventionally set $\ell^*=0$ if ${\textstyle\Pr}^L(X^\oplus_{j\ell}=1)< 1/4$ for all $\ell\in\natu$. By equation~\eqref{eqn:oddbinomial},
    $${\textstyle\Pr}^L(X^\oplus_{j\ell}=1)=\frac{1}{2}-\frac{1}{2}\Bigl(\frac{1-q_N^\ell}{1+q_N^\ell}\Bigr)^r=\frac{1}{2}-\frac{1}{2}\Bigl(\frac{1-\exp(-\pi\ell\sqrt{s/12N})}{1+\exp(-\pi\ell\sqrt{s/12N})}\Bigr)^r.$$
    Because ${\textstyle\Pr}^L(X^\oplus_{j\ell}=1)$ is monotonically decreasing in $\ell$ over $\ell\geq 0$, $\ell^*$ equals the floor of the solution of ${\textstyle\Pr}^L(X^\oplus_{j\ell}=1)=1/4$. A straightforward calculation gives
    $$\ell^*=\lfloor\log\Bigl(\frac{1+2^{-1/r}}{1-2^{-1/r}}\Bigr)\sqrt{\frac{12N}{\pi^2 s}}\rfloor.$$
    By convexity of the function $f(x)=x^{-1/r}$,
    $$2^{-1-1/r}r^{-1}\leq 1-2^{-1/r}\leq r^{-1}.$$
    Hence
    $$r\leq\frac{1+2^{-1/r}}{1-2^{-1/r}}\leq (1+2^{-1/r})2^{1+1/r}r\leq 8r$$
    and
    \begin{equation}\label{eqn:lstarbound}
      \log(r)\sqrt{\frac{12N}{\pi^2 s}}-1\leq \ell^*\leq \log(8r)\sqrt{\frac{12N}{\pi^2 s}}.  
    \end{equation}
    Using the inequality $1-\exp(-x)\geq x/(1+x)$ when $x\geq 0$, equation~\eqref{eqn:chernoffbound} becomes
    \begin{align}\label{eqn:chernoffbound2}
        {\textstyle\Pr}^L\big(\Vert\bsk^\oplus\Vert_1\leq N\big)&\leq \exp\Bigl(tN-s\sum_{\ell\in\natu}{\textstyle\Pr}^L(X^\oplus_{j\ell}=1)\frac{t\ell}{1+t\ell}\Bigr)\\
    &\leq \exp\Bigl(tN-s\sum_{\ell\in \natu}\bsone\{\ell\leq\ell^*\}\frac{t\ell}{4(1+t\ell)}\Bigr)\nonumber\\
    &= \exp\Bigl(tN-\frac{st\ell^*(\ell^*+1)}{8(1+t\ell^*)}\Bigr).\nonumber
    \end{align}
    Setting $t=\sqrt{s/N}$, we derive from equation~\eqref{eqn:lstarbound} that there exists a large enough $R_s$ so that for all $r\geq R_s$,
    $$1+t\ell^*\leq 1+\log(8r)\sqrt{\frac{12}{\pi^2 }}<2\log(r)\sqrt{\frac{12}{\pi^2 }}$$
    and
    $$st\ell^*(\ell^*+1)\geq  \sqrt{sN}\log(r)\sqrt{\frac{12}{\pi^2 }}\Bigl(\log(r)\sqrt{\frac{12}{\pi^2 }}-\sqrt{\frac{s}{N}}\Bigr)> \sqrt{sN}\log(r)^2\frac{6}{\pi^2}.$$
    By increasing $R_s$ if necessary, we further have for all $r\geq R_s$
    $$tN-\frac{st\ell^*(\ell^*+1)}{8(1+t\ell^*)}\leq \sqrt{sN}-\sqrt{sN}\log(r)\frac{\sqrt{3}}{16\pi}<-\sqrt{sN}\log(r)\frac{\sqrt{3}}{32\pi}.$$
    Putting together equation~\eqref{eqn:PrlePrLforsumk} and equation~\eqref{eqn:chernoffbound2}, we get for $r\geq R_s$
    $$\Pr\big(\bsk^\oplus\in Q_N\big)\leq A^r_sN^{r/4}\exp(-\sqrt{sN}\log(r)\frac{\sqrt{3}}{32\pi}),$$
    which completes the proof.
\end{proof}

\section{Proof of Theorem 6}
By our assumption, we assume without loss of generality that
\begin{equation}\label{eqn:thm6}
\liminf_{N\to\infty}\frac{|Q_{N,c,\beta,\cj}(f)|}{|Q_{N}|} >0     
\end{equation}
for some $c>0$ and $\beta\in (0,1]$.

 We first define $N_{m,\beta}=N_m-D_\beta m$ for a large enough $D_\beta\in \natu$, which we will specify later.  By $N_m\sim \lambda m^2/s$ and the mean value theorem,
\begin{equation*}
    \sqrt{\frac{\lambda N_m}{s}}-\sqrt{\frac{\lambda N_{m,\beta}}{s}}\leq \sqrt{\frac{\lambda}{s}}  \frac{1}{2\sqrt{ N_{m,\beta} }}D_\beta m\sim \frac{1}{2}D_\beta. 
\end{equation*}
Hence for large $m$, we have $\sqrt{\lambda N_m/s}-\sqrt{\lambda N_{m,\beta}/s} \leq D_\beta$. Furthermore,  equation~\eqref{eqn:QNasymp} implies 
\begin{equation}\label{eqn:QNbetacount}
    \lim_{m\to\infty}\frac{|Q_{N_{m,\beta} }|}{|Q_{N_m}|}= c_{\beta}>0,
\end{equation}
where $c_{\beta}$ is a constant depending on $D_\beta$ and $s$.

Next, we define for $m \geq 1$
\begin{equation*}
    \Tilde{Q}_{m,\beta}=\Bigl\{\bsk\in Q_{N_{m,\beta}}\Big \vert \hspace{1em}\Bigl|\frac{|\kappa_j|}{\sqrt{\lambda N_{m,\beta}/s}}-2\Bigr|>m^{-1/4}\text{ for some } j \in 1{:}s\Bigr\}.
\end{equation*}
When $\bsk \in Q_{N_{m,\beta}}\setminus \Tilde{Q}_{m,\beta}$, Stirling's formula implies for large $m$
\begin{align*}
    \max_{J\in \cj} \prod_{j\in J} (|\kappa_j|)!\geq &\Bigl(\big(\lceil (2-m^{-1/4})\sqrt{\lambda N_{m,\beta}/s}\rceil\big)!\Bigr)^{\cj_{\max}}\\
    \geq & \Bigl(\big(\lceil (2-m^{-1/4})\sqrt{\lambda N_m/s}\rceil-2D_\beta\big)!\Bigr)^{\cj_{\max}}\\
    \geq & \Bigl(\big(\lceil (2-m^{-1/4})\sqrt{\lambda N_m/s}\rceil\big)! \big(2\sqrt{\lambda N_m/s}\big)^{-2D_\beta}\Bigr)^{\cj_{\max}}\\
     \geq &2^{2\cj_{\max}\log_2(m)\sqrt{\lambda N_m/s}-\cj_{\max}K_sm} \Big(2\sqrt{\lambda N_m/s}\Big)^{-2\cj_{\max}D_\beta}
\end{align*}
for sufficiently large $K_s$ depending on $s$. By our assumption and the above lower bound, for large $m$, $\bsk \in Q_{ N_{m,\beta},c,\beta,\cj}(f)\setminus \Tilde{Q}_{m,\beta}$ satisfy
\begin{align*}
    &|\hat{f}(\bsk)|\\
    \geq & c2^{-\Vert\bskappa\Vert_1}\beta^{\Vert\bskappa\Vert_0}\max_{J\in \cj} \prod_{j\in J} (|\kappa_j|)!\\
    \geq & c2^{-N_{m,\beta}} \beta^{s (2+m^{-1/4})\sqrt{\lambda N_{m,\beta}/s} } 2^{2\cj_{\max}\log_2(m)\sqrt{\lambda N_m/s}-\cj_{\max}K_s m  }\Big(2\sqrt{\lambda N_m/s}\Big)^{-2\cj_{\max}D_\beta} \\
     \geq &c 2^{-N_m+2\cj_{\max}\log_2(m)\sqrt{\lambda N_m/s}+D_\beta [m-2\cj_{\max}\log_2(2\sqrt{\lambda N_m/s})]+3s\log_2(\beta)\sqrt{\lambda N_m/s}-\cj_{\max}K_s m}.
\end{align*}
Comparing the above bound with
$$T_m=K2^{-N_m+2 \cj_{\max}\log_2(m)\sqrt{\lambda N_m/s}+D_{s,\alpha,\cj}m},$$
we can lower bound $|Q_{N_m}\cap \Lambda(T_m)|$ by
$$|Q_{N_m}\cap \Lambda(T_m)|\geq |Q_{ N_{m,\beta},c,\beta,\cj}(f)\setminus \Tilde{Q}_{m,\beta}| \geq |Q_{ N_{m,\beta},c,\beta,\cj}(f)|-|\Tilde{Q}_{m,\beta} |$$
for large $m$ whenever we choose a sufficiently large $D_\beta\in \natu$ such that
$$D_\beta \left[m-2\cj_{\max}\log_2(2\sqrt{\lambda N_m/s})\right]+3s\log_2(\beta)\sqrt{\lambda N_m/s}-\cj_{\max}K_s m> D_{s,\alpha,J}m$$
for large $m$. By Lemma~\ref{lem:meankappa}, 
$$\limsup_{m\to\infty}\frac{|\Tilde{Q}_{m,\beta} |}{|Q_{N_{m,\beta}}|}\leq \limsup_{m\to\infty} sA'_sN_{m,\beta}^{1/4}\exp(-B'_s m^{-1/2} \sqrt{N_{m,\beta}})=0.$$
Hence, we conclude from equations~\eqref{eqn:thm6} and \eqref{eqn:QNbetacount} that
$$ \liminf_{m\to\infty}\frac{|Q_{N_m}\cap \Lambda(T_m)|}{|Q_{N_m}|}\geq \liminf_{m\to\infty}\frac{|Q_{ N_{m,\beta},c,\beta,\cj}(f)|-|\Tilde{Q}_{m,\beta} |}{|Q_{N_{m,\beta}}|} \frac{|Q_{N_{m,\beta}}|}{|Q_{N_m}|}>0.$$

\section{Proof of Theorem~\ref{thm:superpoly}}
Given $\gamma>0$, let
        $$N_{m,\gamma}=\sup \{N\in \natu\mid |Q_N|\le m^{-\gamma} 2^m \}.$$
        By equation~\eqref{eqn:QNasymp}, 
        \begin{equation}\label{eqn:Nmgamma}
    N_{m,\gamma}\sim \lambda m^2/s +\lambda(1-2\gamma) m\log_2(m)/s \quad\text{for} \quad\lambda=3(\log 2)^2/\pi^2. 
\end{equation}
Next, let $\hat{\mu}_{m,\infty}-\mu=\mathrm{SUM}_{1,\gamma}+\mathrm{SUM}_{2,\gamma}$ with
\begin{equation*}
    \mathrm{SUM}_{1,\gamma}=\sum_{\bsk\in Q_{N_{m,\gamma}} }Z(\bsk)S(\bsk)\hat{f}(\bsk),
\end{equation*}
\begin{equation*}
    \mathrm{SUM}_{2,\gamma}=\sum_{\bsk\in  \natu_*^s\setminus  Q_{N_{m,\gamma}}}Z(\bsk)S(\bsk)\hat{f}(\bsk).
\end{equation*}
Because $|Q_{N_{m,\gamma}}|\leq m^{-\gamma}2^m$ and $\Pr(Z(\bsk)=1)=2^{-m}$ for all $\bsk\in Q_{N_{m,\gamma}}$,
\begin{align*}
    \Pr\Big(Z(\bsk)=1 \text{ for any } \bsk\in Q_{N_{m,\gamma}}\Big)=&\Pr\Big(\sum_{\bsk\in Q_{N_{m,\gamma}} }Z(\bsk)\geq 1\Big)\\
    \leq & \e \Big[\sum_{\bsk\in Q_{N_{m,\gamma}} }Z(\bsk)\Big]
    \leq  m^{-\gamma}.
\end{align*}
Therefore, $\mathrm{SUM}_{1,\gamma}=0$ with probability at least $1-m^{-\gamma}$. 

By Remark \ref{rmk:generalNm}, we can apply Corollary \ref{cor:SUM2bound} to $\mathrm{SUM}_{2,\gamma}$ with $N_{m,\gamma}$ replacing $N_m$ and obtain
$$\lim_{m\to\infty} m^{\gamma} \Pr\Big(|\mathrm{SUM}_{2,\gamma}|\geq T_{m,\gamma}\Big)=0$$
for 
$$T_{m,\gamma}=K 2^{-N_{m,\gamma}+2 \cj_{\max}\log_2(m)\sqrt{\lambda N_{m,\gamma}/s}+D_{s,\alpha,\gamma,\cj}m}$$
with a sufficiently large $D_{s,\alpha,\gamma,\cj}$ depending on $s,\alpha,\gamma,\cj_{\max}$. In view of equation~\eqref{eqn:Nmgamma}, we can further find $\Gamma$ depending on $s,\gamma,\cj_{\max},D_{s,\alpha,\gamma,\cj}$ such that 
$${-N_{m,\gamma}+2 \cj_{\max}\log_2(m)\sqrt{\lambda N_{m,\gamma}/s}+D_{s,\alpha,\gamma,\cj}m} \leq -\lambda m^2/s +\Gamma m\log_2(m) $$
for large enough $m$. Our conclusion then follows by taking the union bound over the probability of $\mathrm{SUM}_{1,\gamma}\neq 0$ and $|\mathrm{SUM}_{2,\gamma}|\geq T_{m,\gamma}$.

\section{Proof of Theorem~\ref{thm:Rmr}}
Recall that $\lceil\kappa\rceil$ is the largest element of $\kappa\subseteq \natu$ and $\lceil\bskappa\rceil=\max_{j\in 1{:}s}\lceil\kappa_j\rceil$. For a matrix $C$, $C(\ell, :)$ denotes its $\ell$-th row.

    We first prove the result for $R_{m,1}$. For any nonzero $\bsk$, if $\lceil\kappa_{j^*}\rceil >m$ for a $j^*\in 1{:}s$, we can find $\ell\in \kappa_{j^*}$ such that $\ell>m$ and $M_{j^*}(\ell,:)$ follows a $\dunif(\{0,1\}^m)$ distribution. Because $\mathcal{C}_{j^*}$ is nonsingular, $C_{j^*}(\ell,:)=M_{j^*}(\ell,:)\mathcal{C}_{j^*}$ also follows a $\dunif(\{0,1\}^m)$ distribution and $\sum_{j=1}^s \sum_{\ell\in \kappa_j}  C_j(\ell,:)=\bszero \tmod 2$ occurs with probability $2^{-m}$. Hence, it suffices to consider the maximum of $\Pr(Z(\bsk)=1\mid \mathcal{C}_j, j\in 1{:}s)$ over all nonzero $\bsk$ with $\lceil\bskappa\rceil\leq m$.

Instead of sampling $\mathcal{C}_j$ directly  from $\dunif(\mathcal{I}_m)$, we first sample $\mathcal{C}^*_j$ uniformly from all $m\times m$ matrices over $\mathbb{F}_2$ and then view $\mathcal{C}_j$ as $\mathcal{C}^*_j$ conditioned on $\mathcal{C}^*_j\in \mathcal{I}_m$. For each $j\in 1{:}s$, the probability $\mathcal{C}^*_j\in\mathcal{I}_m$ is given by $\prod_{\ell=1}^m (1-2^{-m+\ell-1})$, because there are $2^m-2^{\ell-1}$ choices for the $\ell$-th row of $\mathcal{C}^*_j$ that are linearly independent of previous rows. We notice this probability is monotonically decreasing in $m$ and 
$$\lim_{m\to\infty}\prod_{\ell=1}^m (1-2^{-m+\ell-1})=\prod_{\ell=1}^\infty (1-2^{-\ell})\geq \exp(-\sum_{\ell=1}^\infty \frac{2^{-\ell}}{1-2^{-\ell}})\geq \exp(-2), $$
where we have used $\log(1-x)\geq - x/(1-x)$ for $x\in (0,1)$.

Now set  $C^*_j=M_j \mathcal{C}^*_j$ and 
\begin{equation}\label{eqn:Zkstar}
    Z^*(\bsk)=\bsone\Big\{ \sum_{j=1}^s \sum_{\ell\in \kappa_j}  C^*_j(\ell,:)=\bszero \tmod 2\Big\}=\bsone\Big\{\sum_{j=1}^s \sum_{\ell\in \kappa_j} M_j(\ell,:) \mathcal{C}^*_j=\bszero \tmod 2\Big\}.
\end{equation}
When $\kappa_j\neq\emptyset$ and $\lceil\kappa_j\rceil\leq m$, the sum $\sum_{\ell\in \kappa_j} M_j(\ell,:)$ is nonzero, and consequently $ \sum_{\ell\in \kappa_j} M_j(\ell,:) \mathcal{C}^*_j$ follows a $\dunif(\{0,1\}^m)$ distribution. Hence when $\bsk\neq \bszero$ and $\lceil\bskappa\rceil\leq m$,
$$2^{-m}=\Pr(Z^*(\bsk)=1)\geq \Pr(\mathcal{C}^*_j\in \mathcal{I}_m,j\in 1{:}s)\Pr(Z^*(\bsk)=1\mid \mathcal{C}^*_j\in \mathcal{I}_m,j\in 1{:}s).$$
Conditioned on $\mathcal{C}^*_j\in \mathcal{I}_m$ for all $j\in 1{:}s$, $Z(\bsk)$ has the same distribution as $Z^*(\bsk)$. Therefore
\begin{equation}\label{eqn:Zkbound}
    \Pr(Z(\bsk)=1)\leq \frac{1}{\Pr(\mathcal{C}^*_j\in \mathcal{I}_m,j\in 1{:}s)} 2^{-m} \leq \exp(2s)2^{-m}.
\end{equation}
Because $\Pr(Z(\bsk)=1)=\e[\Pr(Z(\bsk)=1\mid \mathcal{C}_j, j\in 1{:}s )]$,  the Markov's inequality shows for each nonzero $\bsk$
\begin{equation}\label{eqn:largeZkbound}
    \Pr\Big(\Pr(Z(\bsk)=1\mid \mathcal{C}_j, j\in 1{:}s)> 2^{-m+R}\Big)\leq \exp(2s)2^{-R}
\end{equation}
for $R\geq 0$. 

Next, observe that $\Pr(Z(\bsk)=1\mid \mathcal{C}_j, j\in 1{:}s)$ depends on $\bsk$ only through the values $\{\lceil\kappa_j\rceil, j\in 1{:}s\}$. Indeed,
$$\sum_{j=1}^s \sum_{\ell\in \kappa_j}  C_j(\ell,:)=\sum_{j=1}^s \Big(\sum_{\ell\in\kappa_j}M_j(\ell,:)\Big) \mathcal{C}_j,$$
and because $M_j$ is lower-triangular,  $\sum_{\ell\in\kappa_j}M_j(\ell,:)\overset{d}{=}M_j(\lceil\kappa_j\rceil,:)$. When $\lceil\bskappa\rceil\leq m$, each $\lceil\kappa_j\rceil$ can take any value in $\{0,1,\dots,m\}$giving at most $(m+1)^s$ distinct combinations. Applying a union bound over these combinations with $R=3s\log_2(m+1)$ yields
\begin{align*}
 \Pr\Big(\sup_{\bsk\neq\bszero}\Pr(Z(\bsk)=1\mid \mathcal{C}_j, j\in 1{:}s)> 2^{-m+R}\Big)\leq & (m+1)^s \exp(2s)2^{-R}\\
 =& \frac{\exp(2s)}{(m+1)^{2s}}.   
\end{align*}
By the definition of $1$-way rank deficiency, $R_{m,1}\leq R$ whenever $\sup_{\bsk\neq\bszero}\Pr(Z(\bsk)=1\mid \mathcal{C}_j, j\in 1{:}s)\le 2^{-m+R}$. This completes the proof.

The result for $R_{m,r}$ when $r\geq 2$ can be established similarly. Let $V=\{\bsk_1,\dots,\bsk_r\}$ have rank $r$ with $\bsk_i=(k_{1,i},\dots,k_{s,i})$.

\textbf{Case 1.} Suppose $\lceil\kappa_{j^*,i^*}\rceil>m$ for some $j^*\in 1{:}s$ and $i^*\in 1{:}r$. After an invertible linear transformation on $V$ if necessary, we can find $\ell\in \kappa_{j^*,1}$ such that $\ell>m$ and $\ell\notin \kappa_{j^*,i}$ for all $i\in 2{:}r$. Condition on all random entries of $\mathcal{C}_j,j\in 1{:}s$ and $M_j, j\in 1{:}s$ other than $M_{j^*}(\ell,:)$. Then $Z(\bsk_{i})$ becomes deterministic for $i\in 2{:}r$ and $Z(\bsk_1)=1$ with $2^{-m}$ probability because $\mathcal{C}_{j^*}$ is nonsingular and $C_{j^*}(\ell,:)=M_{j^*}(\ell,:)\mathcal{C}_{j^*}$ follows a $\dunif(\{0,1\}^m)$ distribution. Hence 
\begin{align}\label{eqn:ZkVbound}
    \Pr(Z(\bsk)=1 \text{ for all }\bsk\in V)=2^{-m}\Pr(Z(\bsk_i)=1, i\in 2{:}r )\leq 2^{-mr+R_{m,r-1}}.
\end{align}

\textbf{Case 2.} Suppose $\max_{i\in 1{:}r}\lceil\bskappa_{i}\rceil\leq m$. After an invertible linear transformation on $V$ if necessary, we can find $j^*\in 1{:}s$ such that $\lceil\kappa_{j^*,1}\rceil>\lceil\kappa_{j^*,i}\rceil$ for all $i\in 2{:}r$. Denote $\ell^*=\lceil\kappa_{j^*,1}\rceil$. 

As before, we sample $\{\mathcal{C}^*_j, j\in 1{:}s\}$ independently from the uniform distribution on $m\times m$ matrices over $\mathbb{F}^2$ and define $Z^*(\bsk)$ by equation~\eqref{eqn:Zkstar}. Condition  on all random entries of $\{M_j, j\in 1{:}s\}$ and $\{\mathcal{C}^*_j,j\in 1{:}s\}$ other than $\mathcal{C}^*_{j^*}(\ell^*,:)$. Then $Z^*(\bsk_{i})$ is deterministic for $i\in 2{:}r$,
and because $M_{j^*}$ has ones one the diagonal,
$$M_{j^*}(\ell^*,:)\mathcal{C}^*_{j^*}=\mathcal{C}^*_{j^*}(\ell^*,:)+\sum_{\ell<\ell^*}M_{j^*}(\ell^*,\ell)\mathcal{C}^*_{j^*}(\ell,:)$$
follows a $\dunif(\{0,1\}^m)$ distribution. Hence $Z^*(\bsk_1)=1$ with $2^{-m}$ probability and
\begin{align*}
    \Pr(Z^*(\bsk)=1 \text{ for all }\bsk\in V)=2^{-m}\Pr(Z^*(\bsk_i)=1, i\in 2{:}r ).
\end{align*}
Applying this argument inductively to $V'=\{\bsk_2,\dots,\bsk_r\}$ yields 
$$\Pr(Z^*(\bsk)=1 \text{ for all }\bsk\in V)=2^{-mr}.$$
Following the same reasoning that led to equations~\eqref{eqn:Zkbound} and \eqref{eqn:largeZkbound}, we obtain
$$\Pr(Z(\bsk)=1 \text{ for all }\bsk\in V) \leq \exp(2sr) 2^{-mr}$$
and, for $R\geq 0$,
\begin{equation}\label{eqn:largeZkVbound}
   \Pr\Big(\Pr(Z(\bsk)=1 \text{ for all }\bsk\in V\mid \mathcal{C}_j, j\in 1{:}s)> 2^{-mr+R}\Big)\leq \exp(2sr)2^{-R}. 
\end{equation}
Finally, for each $j\in 1{:}s$ and $u\subseteq 1{:}r$, we define 
$$\kappa_{j,u}=\{\ell\in \natu\mid \ell\in \kappa_{j,i} \text{ for } i\in u, \ell\notin \kappa_{j,i} \text{ for } i\in 1{:}r \setminus u\}.$$
These sets are disjoint for different $u$. By the lower-triangular structure of $M_j$,
$$\sum_{\ell\in \kappa_{j,i}} M_j(\ell,:) \mathcal{C}_j=\sum_{\substack{u\subseteq 1{:}r\\i\in u}}\Big(\sum_{\ell\in \kappa_{j,u}} M_j(\ell,:) \Big)\mathcal{C}_j\overset{d}{=}\sum_{\substack{u\subseteq 1{:}r\\i\in u}} M_j(\lceil\kappa_{j,u}\rceil,:)\mathcal{C}_j.$$
Therefore $\Pr(Z(\bsk)=1 \text{ for all }\bsk\in V\mid \mathcal{C}_j, j\in 1{:}s)$ equals the probability that
$$\sum_{j=1}^s\sum_{\substack{u\subseteq 1{:}r\\i\in u}} M_j(\lceil\kappa_{j,u}\rceil,:)\mathcal{C}_j= \bszero\tmod 2 \text{ for all }i\in 1{:}r.$$
There are $2^r-1$ nonempty subsets $u\subseteq 1{:}r$. When $\max_{i\in 1{:}r}\lceil\bskappa_{i}\rceil\leq m$, each $\lceil\kappa_{j,u}\rceil$ can take any value in $\{0,1,\dots,m\}$, giving at most $(m+1)^{(2^r-1)s}$ combinations of $\{\lceil\kappa_{j,u}\rceil,j\in 1{:}s, u\subseteq 1{:}r\}$. Applying equation~\eqref{eqn:largeZkVbound} together with a union bound over these combinations, we obtain that the probability of
$$\sup_{\substack{V=(\bsk_1,\dots,\bsk_r)\in \mathbb{V}_r\\ \max_{i\in 1{:}r}\lceil\bskappa_{i}\rceil\leq m}}\Pr(Z(\bsk)=1 \text{ for all }\bsk\in V\mid \mathcal{C}_j, j\in 1{:}s)> 2^{-mr+R} $$
is bounded by $(m+1)^{s(2^r-1)}\exp(2sr)2^{-R}$, which equals $\exp(2sr)(m+1)^{-2sr}$ when $R=(2^{r}+2r-1) s\log_2(m+1)$. 

Finally, combining this estimate with the bound from Case 1 (equation~\eqref{eqn:ZkVbound}) completes the proof.

\section{Proof of Lemma \ref{lem:generalSUM1bound}}

We first present the following technical lemma, which we will use to show most $\bsk\in Q_{N_m}$ satisfy $\Pr(Z(\bsk)=1)=2^{-m}$ even when the marginal order of the randomization scheme is nonzero.

\begin{lemma}\label{lem:maxkappa}
For $L\geq 0$ and $\kappa\subseteq\natu$, define $\kappa^{>L}=\{\ell\in \kappa\mid \ell>L \}.$
Let $N\geq 1$ and $\bsk_1,\bsk_2$ be sampled independently from $\dunif (Q_N)$. Then for any $\rho>0$, there exist positive constants $A_{\rho,s}, B_{\rho,s}$ depending on $\rho,s$ such that for each $j\in 1{:}s$,
$$\Pr\Big(\kappa^{>\rho\sqrt{N}}_{j,1}=\emptyset\Big)\leq A_{\rho,s}N^{1/4}\exp(- B_{\rho,s}\sqrt{N})$$
and 
$$\Pr\Big(\kappa^{>\rho\sqrt{N}}_{j,1}=\kappa^{>\rho\sqrt{N}}_{j,2}\Big)\leq A^2_{\rho,s}N^{1/2}\exp(- 2B_{\rho,s}\sqrt{N}).$$
\end{lemma}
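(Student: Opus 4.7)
The plan is to establish both inequalities by generating-function and Hardy--Ramanujan-style saddle-point analysis, following the template used elsewhere in the appendix for Lemma~\ref{lem:sumk} and Lemma~\ref{lem:meankappa}.

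For the first inequality, I would express $\Pr(\kappa^{>L}_{j,1} = \emptyset) = |Q_N^{(L,j)}|/|Q_N|$, where $L = \lfloor\rho\sqrt{N}\rfloor$ and $Q_N^{(L,j)} := \{\bsk\in Q_N : \kappa_j\subseteq[L]\}$. The cumulative generating functions for the numerator and denominator are
$$\prod_{\ell=1}^L(1+z^\ell)\prod_{\ell=1}^\infty(1+z^\ell)^{s-1}\frac{1}{1-z} \quad\text{and}\quad \prod_{\ell=1}^\infty(1+z^\ell)^s\frac{1}{1-z},$$
so their ratio carries the factor $1/\prod_{\ell>L}(1+z^\ell)$. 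Evaluating this factor at the Hardy--Ramanujan saddle $z = e^{-t}$ with $t\sim \pi/\sqrt{3sN}$ gives $\prod_{\ell>L}(1+e^{-t\ell}) = \exp(B_{\rho,s}\sqrt{N} + o(\sqrt{N}))$ for an explicit $B_{\rho,s}>0$ depending on $\rho$ and $s$ via the integral $\int_{t\rho\sqrt{N}}^\infty \log(1+e^{-u})\rd u$. A standard coefficient-transfer theorem then yields the first bound with polynomial prefactor $A_{\rho,s}N^{1/4}$.

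For the second inequality, I would decompose by independence as
$$\Pr\bigl(\kappa_{j,1}^{>L} = \kappa_{j,2}^{>L}\bigr) = \sum_A p_A^2, \qquad p_A := \Pr\bigl(\kappa_{j,1}^{>L} = A\bigr),$$
and exploit the bijection $\kappa_j \mapsto \kappa_j\setminus A$ to identify $\{\bsk \in Q_N : \kappa_j^{>L} = A\}$ with $\{\bsk' \in Q_{N-a}\cup\{\bszero\}: \kappa_j'\subseteq[L]\}$, where $a=\Vert A\Vert_1$. This gives $p_A = (|Q_{N-a}^{(L,j)}| + O(1))/|Q_N|$ and hence
$$\sum_A p_A^2 = \frac{1}{|Q_N|^2}\sum_{a\ge 0}q_{>L}(a)\,|Q_{N-a}^{(L,j)}|^2 + o(1),$$
where $q_{>L}(a) = [z^a]\prod_{\ell>L}(1+z^\ell)$. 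Applying the saddle-point machinery once more to this collision sum, with the saddle again near $t\sim \pi/\sqrt{3sN}$, one obtains an exponential growth rate of $2\phi_L(N)$, i.e., twice that of $|Q_N^{(L,j)}|$; division by $|Q_N|^2$ then yields the square of the first bound, namely $A_{\rho,s}^2 N^{1/2}\exp(-2B_{\rho,s}\sqrt{N})$.

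The main obstacle lies in the saddle-point analysis of the collision sum $\sum_a q_{>L}(a)|Q_{N-a}^{(L,j)}|^2$: the factor $q_{>L}(a)$ grows with $a$ while $|Q_{N-a}^{(L,j)}|^2$ decays, and one must verify that the dominant contribution concentrates near $a=0$ so that the effective exponent is exactly doubled, with no residual subexponential correction. Controlling this requires choosing $B_{\rho,s}$ slightly smaller than the tight first-bound rate (absorbing the correction from the tail of $q_{>L}$) while keeping $B_{\rho,s} > 0$ for every fixed $\rho$. The analysis parallels Meinardus-style refinements of the Hardy--Ramanujan method and must be executed uniformly in $\rho$ and $s$ so that the same constants $A_{\rho,s}$ and $B_{\rho,s}$ serve both bounds.
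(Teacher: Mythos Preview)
Your approach for the first inequality is essentially the paper's, phrased in different language: the paper's importance-sampling measure $L_N$ with $q_N=\exp(-\pi\sqrt{s/12N})$ \emph{is} the exponential tilt at the Hardy--Ramanujan saddle, and the likelihood-ratio bound $\Pr(\bsk\in A)\le A_sN^{1/4}{\textstyle\Pr}^L(\bsk\in A)$ is exactly the polynomial prefactor you invoke. Under $L_N$ the event $\kappa_j^{>L}=\emptyset$ has probability $\prod_{\ell>L}(1+q_N^\ell)^{-1}$, which after an integral comparison gives the exponential factor.

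For the second inequality the paper's route is much shorter than your collision-sum plan. After the two-sample likelihood-ratio bound $\Pr\le A_s^2N^{1/2}{\textstyle\Pr}^L$, the paper exploits that under $L_N$ the bits $X_{j\ell}=\bsone\{\ell\in\kappa_j\}$ are independent across both $\ell$ and samples, so
\[
{\textstyle\Pr}^L\bigl(\kappa_{j,1}^{>L}=\kappa_{j,2}^{>L}\bigr)=\prod_{\ell>L}\frac{1+q_N^{2\ell}}{(1+q_N^\ell)^2},
\]
a closed-form product. Bounding $\sum_{\ell>L}\log(1+q_N^{2\ell})$ above and $\sum_{\ell>L}2\log(1+q_N^\ell)$ below by integrals yields the exponent $2B_{\rho,s}\sqrt{N}$ with $B_{\rho,s}=c_{\rho,s}-\tfrac12 c'_{\rho,s}$ directly. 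Your intuition that $B_{\rho,s}$ must sit slightly below the tight first-bound rate is correct and matches the paper, but the independence under $L_N$ removes any need to analyze where $\sum_a q_{>L}(a)|Q_{N-a}^{(L,j)}|^2$ concentrates---the step you identified as the main obstacle simply disappears.
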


\begin{proof}
    By equation~\eqref{eqn:PrlePrL} and Lemma~\ref{lem:likehoodratio},
$$\Pr\Big(\kappa^{>\rho \sqrt{N}}_{j,1}=\emptyset\Big)\leq A_s N^{1/4}{\textstyle\Pr}^L\Big(\kappa^{>\rho \sqrt{N}}_{j,1}=\emptyset\Big).$$
Because $\kappa^{>\rho \sqrt{N}}_{j,1}=\emptyset$ if and only if $\ell\notin \kappa_{j,1}$ for all $\ell>\rho\sqrt{N}$, 
$${\textstyle\Pr}^L\Big(\kappa^{>\rho \sqrt{N}}_{j,1}=\emptyset\Big)=\prod_{\ell=\lceil\rho\sqrt{N}\rceil}^\infty\frac{1}{1+q^\ell_N}=\exp\Big(-\sum_{\ell=\lceil\rho\sqrt{N}\rceil}^\infty\log(1+q^\ell_N)\Big).$$
Because $\log(1+q^\ell_N)$ is monotonically decreasing in $\ell$, 
\begin{align}\label{eqn:crhobound}
        \sum_{\ell=\lceil\rho\sqrt{N}\rceil}^\infty \log(1+q^\ell_N)
        \geq&\int_{\lceil\rho\sqrt{N}\rceil}^\infty \log\Big(1+\exp(-\pi\ell\sqrt{s/12N})\Big)\rd \ell \\
        \geq &c_{\rho,s}\sqrt{N}-\log(2) \nonumber
    \end{align}
for 
\begin{equation*}
c_{\rho,s}=\frac{1}{\pi}\sqrt{\frac{12}{s}}\int_{\pi\rho\sqrt{s/12}}^\infty \log\Big(1+\exp(-\ell)\Big)\rd \ell.
\end{equation*}
Hence
$$\Pr\Big(\kappa^{>\rho \sqrt{N}}_{j.1}=\emptyset\Big)\leq 2A_s N^{1/4}\exp\Big(-c_{\rho,s}\sqrt{N}\Big).$$

Similarly, by equation~\eqref{eqn:rversionPrlePrL} and Lemma~\ref{lem:likehoodratio},
$$\Pr\Big(\kappa^{>\rho \sqrt{N}}_{j,1}=\kappa^{>\rho \sqrt{N}}_{j,2}\Big)\leq A^2_s N^{1/2}{\textstyle\Pr}^L\Big(\kappa^{>\rho \sqrt{N}}_{j,1}=\kappa^{>\rho \sqrt{N}}_{j,2}\Big).$$
Because $\kappa^{>\rho \sqrt{N}}_{j,1}=\kappa^{>\rho \sqrt{N}}_{j,2}$ if and only if each $\ell>\rho \sqrt{N}$ either appears in both of or neither of $\kappa_{j,1},\kappa_{j,2}$,
\begin{align*}
{\textstyle\Pr}^L(\kappa^{>\rho \sqrt{N}}_{j,1}=\kappa^{>\rho \sqrt{N}}_{j,2})
=& \prod_{\ell=\lceil\rho\sqrt{N}\rceil}^\infty\frac{1+q^{2\ell}_N}{(1+q^\ell_N)^2}\\
=&\exp\Big(\sum_{\ell=\lceil\rho\sqrt{N}\rceil}^\infty\log(1+q^{2\ell}_N)-\sum_{\ell=\lceil\rho\sqrt{N}\rceil}^\infty 2\log(1+q^\ell_N)\Big).
\end{align*}
By the monotonicity of $\log(1+q^{2\ell}_N)$,
\begin{align*}
        \sum_{\ell=\lceil\rho\sqrt{N}\rceil}^\infty \log(1+q^{2\ell}_N)
        \leq&\int_{\lceil\rho\sqrt{N}\rceil-1}^\infty \log\Big(1+\exp(-2\pi\ell\sqrt{s/12N})\Big)\rd \ell \\
        \leq &c'_{\rho,s}\sqrt{N} +\log(2)
    \end{align*}
    for 
    \begin{equation*}
c'_{\rho,s}=\frac{1}{2\pi}\sqrt{\frac{12}{s}}\int_{2\pi\rho\sqrt{s/12}}^\infty \log\Big(1+\exp(-\ell)\Big)\rd \ell.
\end{equation*}
Notice that $c'_{\rho,s}<c_{\rho,s}/2$. Along with equation~\eqref{eqn:crhobound}, we get the bound
$$\Pr\Big(\kappa^{>\rho \sqrt{N}}_{j,1}=\kappa^{>\rho \sqrt{N}}_{j,2}\Big)\leq 8 A^2_s N^{1/2} \exp\Big(-2(c_{\rho,s}-\frac{1}{2}c'_{\rho,s})\sqrt{N}\Big). $$
Our conclusion follows by taking $A_{\rho,s}=2\sqrt{2}A_s$ and $B_{\rho,s}=c_{\rho,s}-c'_{\rho,s}/2>(3/4)c_{\rho,s}.$
\end{proof}

\begin{proof}[Proof of Lemma \ref{lem:generalSUM1bound}]
    Let $L_{m,d}=\{\bsk\in L_m\mid \lceil\bskappa\rceil\le dm\}$. Because $N_m\sim\lambda m^2/s$, we can find a constant $\rho$ depending on $d$ and $s$ such that $dm\leq \rho \sqrt{N_m}$ for all sufficiently large $m$. Lemma~\ref{lem:maxkappa} then yields 
$$|L_{m,d}|=|Q_{N_m}|\frac{|L_{m,d}|}{|Q_{N_m}|}\leq \frac{1}{2}\log_2(m) 2^{m}A_{\rho,s}N_m^{1/4}\exp\Big(-B_{\rho,s}\sqrt{N_m}\Big).$$
Let $\mathcal{A}=\{Z(\bsk)=1\text{ for some } \bsk\in L_{m,d}\}$. By a union bound argument,
\begin{align*}
\Pr(\mathcal{A})\leq 2^{-m+R_{m,1}}|L_{m,d}| 
\leq \frac{1}{2}\log_2(m)2^{R_{m,1}}A_{\rho,s}N_m^{1/4}\exp\Big(-B_{\rho,s}\sqrt{N_m}\Big),
\end{align*}
which converges to $0$ since $\lim_{m\to\infty}R_{m,1}/m=0$. Similarly, we define 
$$L'_{m,d}=\{(\bsk_1,\bsk_2)\in L_{m}\times L_m\mid \kappa^{>dm}_{j,1}=\kappa^{>dm}_{j,2} \text{ for all } j\in 1{:}s\}.$$
An analogous application of Lemma~\ref{lem:maxkappa} shows that $2^{-2m+R_{m,2}}|L'_{m,d}|$ also converges to $0$.

For $\bsk_1\in L_m\setminus L_{m,d}$, we can find $\ell_1,j_1$ such that $\ell_1>dm,\ell_1\in\kappa_{j_1}$. By the definition of marginal order, $C_{j_1}(\ell_1,:)$ is independently drawn from $\dunif(\{0,1\}^m)$, so $Z(\bsk)$ is independent of $\mathcal{A}$ and $$\Pr(Z(\bsk)\mid \mathcal{A}^c)=2^{-m}.$$
Now suppose  $\bsk_1,\bsk_2\in L_{m}\setminus L_{m,d}$ and $(\bsk_1,\bsk_2)\notin L'_{N_m,d}$. After replacing $\bsk_2$ by $\bsk_1\oplus\bsk_2$ if necessary, we can assume there exist $\ell_1,\ell_2,j_1,j_2$ such that 
$$\ell_1>dm,\ell_1\in \kappa_{j_1,1},\ell_1\notin \kappa_{j_1,2}, \quad \ell_2>dm, \ell_2\in \kappa_{j_2,2},\ell_2\notin \kappa_{j_2,1}.$$ Because $C_{j_1}(\ell_1,:)$ and $C_{j_2}(\ell_2,:)$ are independently drawn from $\dunif(\{0,1\}^m)$, $\{Z(\bsk_1),Z(\bsk_2),\mathcal{A}\}$ are jointly independent and the conditional covariance satisfies $$\cov(Z(\bsk_1),Z(\bsk_2)\mid \mathcal{A}^c)=0.$$
Therefore, 
$$\e\left[\sum_{\bsk\in L_{m}}Z(\bsk)\Big\vert \mathcal{A}^c\right]=\sum_{\bsk \in L_{m}\setminus L_{m,d}}\Pr(Z(\bsk)\mid \mathcal{A}^c)=2^{-m}(|K_{m}|-|L_{m,d}|),$$
and
\begin{align*}
&\var\left(\sum_{\bsk\in L_{m}}Z(\bsk)\Big\vert \mathcal{A}^c\right) \\
\le & \sum_{\bsk \in L_{m}\setminus L_{m,d}} \var(Z(\bsk)\mid \mathcal{A}^c)+\sum_{(\bsk_1,\bsk_2)\in L'_{m,d}} \e[Z(\bsk_1)Z(\bsk_2)\mid \mathcal{A}^c ]\\
\leq & 2^{-m}(|L_{m}|-|L_{m,d}|)+\frac{1}{\Pr(\mathcal{A}^c)} 2^{-2m+R_{m,2}}|L'_{m,d}|.
\end{align*}
Since $2^{-m}|L_m|\to\infty$, $2^{-m}|L_{m,d}|\to 0$, $\Pr(\mathcal{A}^c)\to 1$ and $2^{-2m+R_{m,2}}|L'_{m,d}| \to 0$ as $m\to\infty$, Chebyshev's inequality gives the desired result.
\end{proof}

\section{Additional simulation results for Subsection \ref{subsec:expr1}}

We repeated the experiment in Subsection \ref{subsec:expr1} on the eight-dimensional integrand $f(\bsx)=\prod_{j=1}^8 x_j\exp(x_j)$. 

Figure~\ref{fig:eightPr} verifies the convergence of $\Pr(\hat{\mu}_{m,E}>\mu)$ to $1/2$ for $m=1,\dots,18$. Each probability is estimated from $8\times10^4$ replicates with precision $E=32$. Convergence is markedly slower than in the one-dimensional case, with complete random designs (CRD) and random linear scrambling (RLS) exhibiting comparable rates.

Figures~\ref{fig:eightlength} and~\ref{fig:eightcover} compare the $90$th percentile interval lengths and empirical coverage levels, respectively. Under CRD, quantile intervals outperform $t$-intervals; under RLS, the opposite holds. This is because $\hat{\mu}_{m,E}-\mu$ under RLS remains approximately normal for the range of $m$ tested. Although our theory predicts that the distribution of $\hat{\mu}_{m,E}-\mu$ becomes concentrated and heavy-tailed asymptotically, the curse of dimensionality delays these effects. At $m=18$, RLS errors exhibit only marginally heavier tails than a normal distribution (Figure~\ref{fig:normality}).

\begin{figure}
    \centering
    \includegraphics[width=0.45\paperwidth]{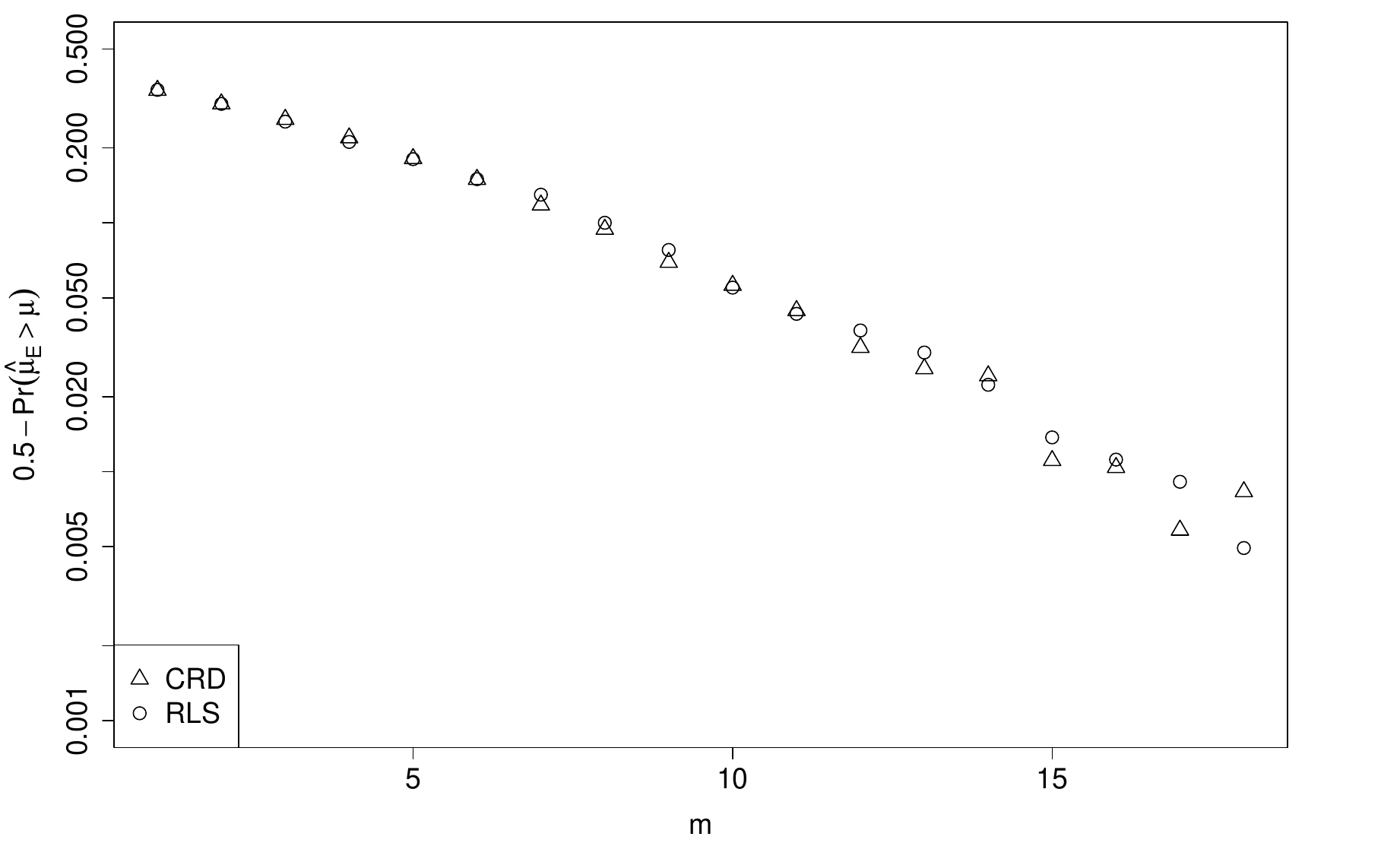}
    \caption{Deviation of $\Pr(\hat{\mu}_{m,E} > \mu)$ from $1/2$ when $f(\bsx)=\prod_{j=1}^8 x_j\exp(x_j)$. }
    \label{fig:eightPr}
\end{figure}

\begin{figure}
    \centering
    \includegraphics[width=0.45\paperwidth]{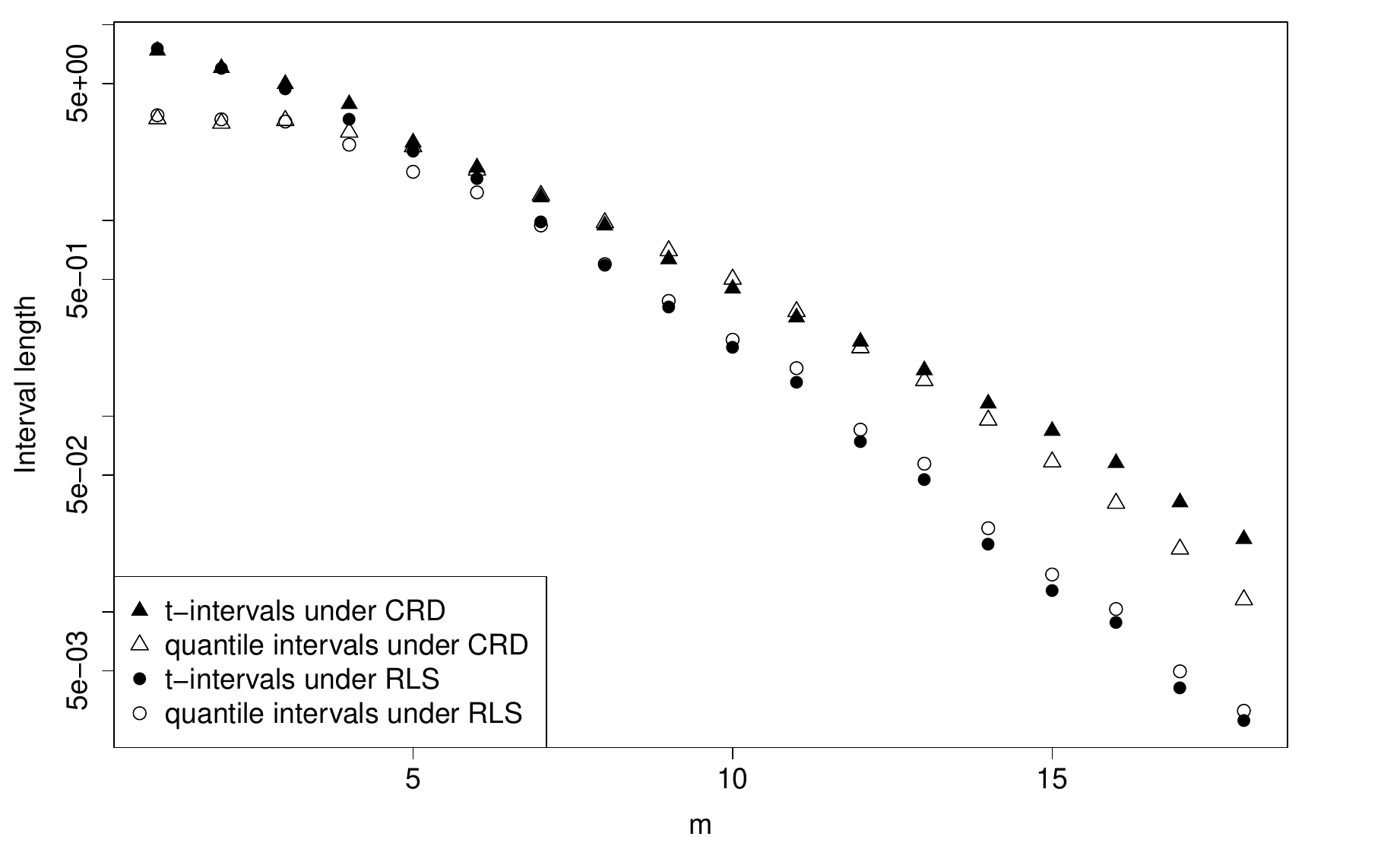}
    \caption{$90$th percentile interval lengths when $f(\bsx)=\prod_{j=1}^8 x_j\exp(x_j)$.}
    \label{fig:eightlength}
\end{figure}

\begin{figure}
    \centering
    \includegraphics[width=0.45\paperwidth]{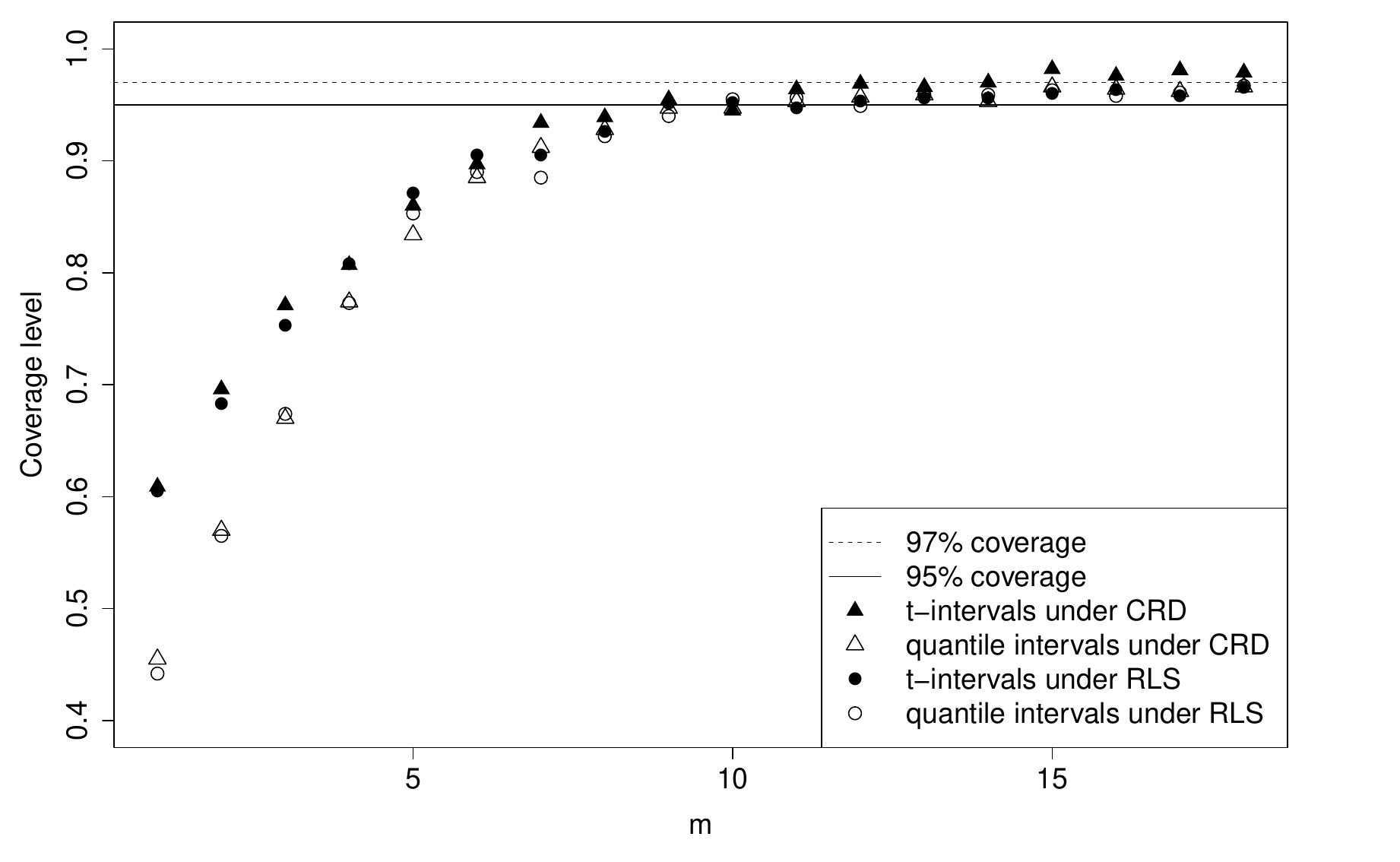}
    \caption{Coverage levels when $f(\bsx)=\prod_{j=1}^8 x_j\exp(x_j)$.}
    \label{fig:eightcover}
\end{figure}

\begin{figure}
    \centering
    \includegraphics[width=0.45\paperwidth]{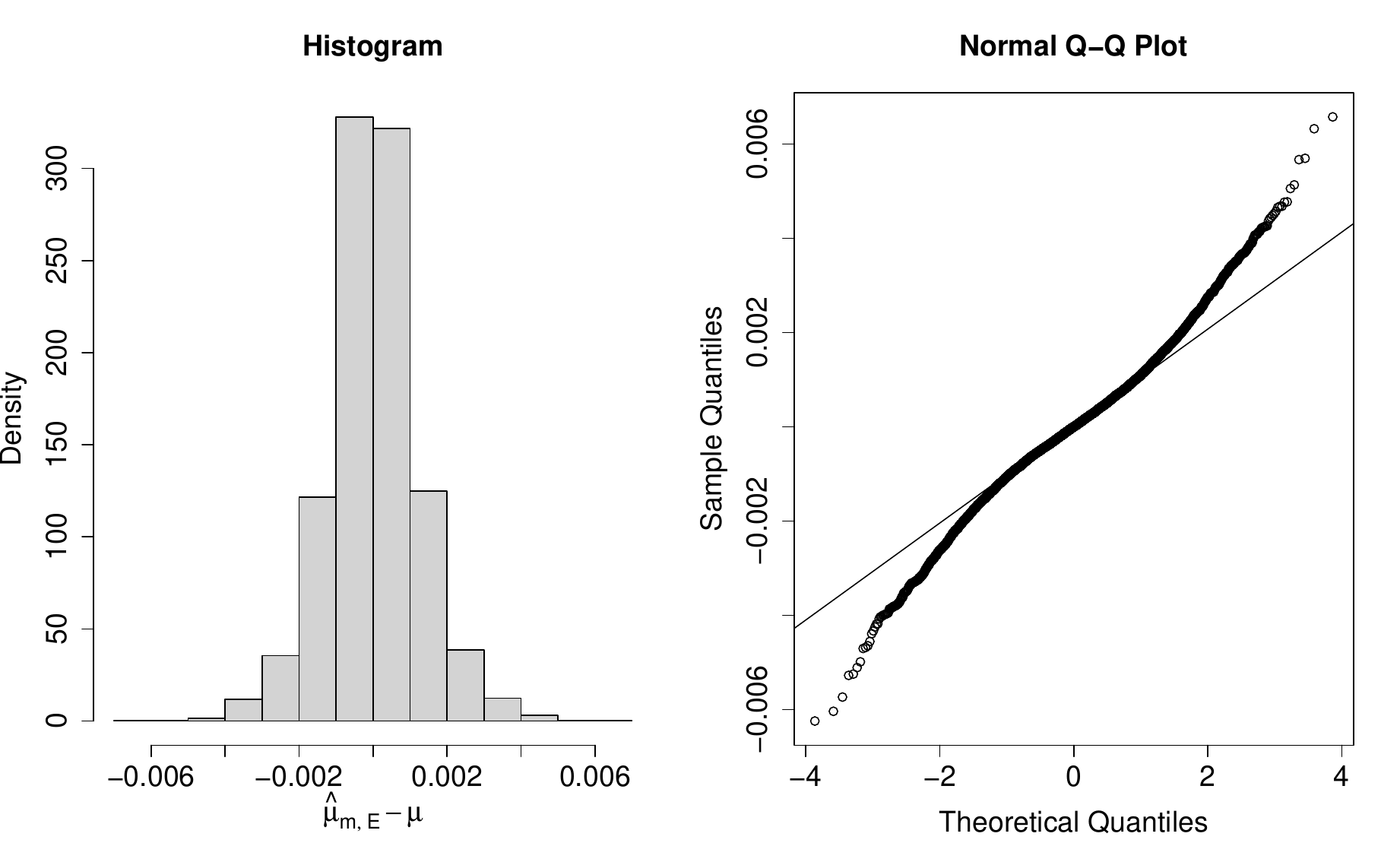}
    \caption{Distribution of errors under RLS when $m=18$ and $f(\bsx)=\prod_{j=1}^8 x_j\exp(x_j)$.}
    \label{fig:normality}
\end{figure}

\section{Additional simulation results for Subsection \ref{subsec:expr2}}

We tested the empirical Bernstein confidence intervals (EBCI) introduced in \cite{jain2025empirical} on the Robot Arm function $f(\bsx)$ from Subsection \ref{subsec:expr2}. Since $f(\bsx)$ measures the Euclidean distance from the origin after four displacements in $\real^2$, it takes values in $[0,4L_{\max}]=[0,8]$. To set the parameters $\lambda_i$ in \cite[equation (7)]{jain2025empirical}, we followed \cite[equation (8)]{jain2025empirical} with $c=0.5$.

For a fair comparison with the methods in Subsection \ref{subsec:expr2}, we fixed the total number of function evaluations at $N=9\times 2^{16}$ and matched the nominal coverage level. We tested $\hat{\mu}_{m,E}$ with $m=2,3,4,5$ and $E=32$, corresponding to per‑replicate sample sizes $n=4,8,16,32$. The number of replicates was then $N/n$, and for each $n$ we constructed $100$ intervals.

Figure~\ref{fig:EBCI} reports the results. The configuration with $n=8$ performs best overall, yielding interval lengths between $5\times10^{-3}$ and $7\times10^{-5}$ and covering the true integral in all $100$ trials. For comparison, the Monte Carlo variance $\sigma^2$ is approximately $1.8$ (estimated from $N$ independent uniform draws from $[0,1]^8$). A standard normal interval with the same nominal coverage $1-\alpha\approx 96.1\%$ would therefore have length $2\Phi^{-1}(1-\alpha/2)(\sigma/\sqrt{N})\approx 7.2\times10^{-3}$, where $\Phi^{-1}$ denotes the normal quantile function. Thus, the best EBCI marginally outperforms the plain Monte Carlo interval.

\begin{figure}
    \centering
    \includegraphics[width=0.45\paperwidth]{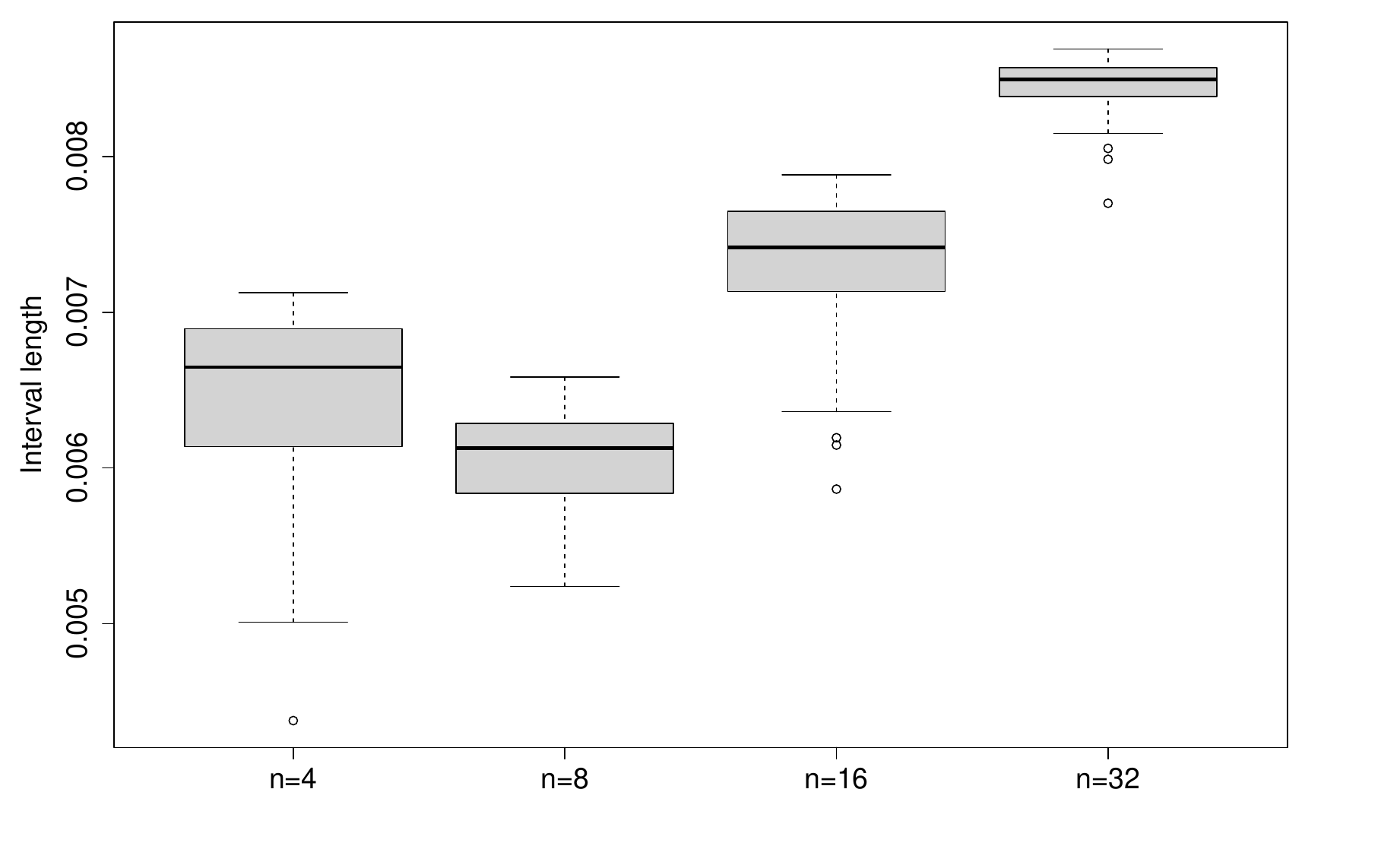}
    \caption{Distribution of interval lengths of EBCI when  $f(\bsx)$ is the Robot Arm function.}
    \label{fig:EBCI}
\end{figure}

\bibliographystyle{abbrv} 
\bibliography{qmc}

\end{document}